\def \mbr {{\mathbb R}}
\def \beqq {\begin{equation}}
\def \eeqq {\end{equation}}
\def \dim {\text{dim}}
\def \bpf {\begin{proof}}
\def \epf {\end{proof}}
\def \beq {\begin{equation*}}
\def \eeq {\end{equation*}}
\def \lap {\Delta}
\def \p {\partial}
\newtheorem{theorem}{Theorem}[section]
\theoremstyle{plain}
\newtheorem{corollary}[theorem]{Corollary}
\newtheorem{lemma}[theorem]{Lemma}
\newtheorem{proposition}[theorem]{Proposition}
\theoremstyle{remark}
\numberwithin{equation}{section}
\newcommand{\tr}{\operatorname{tr}}
\newcommand{\vol}{\operatorname{Vol}}
\newcommand{\re}{\operatorname{Re}}
\newcommand{\im}{\operatorname{Im}}
\newcommand{\res}{\operatorname{Res}}
\newcommand{\rank}{\operatorname{rank}}
\newcommand{\supp}{\operatorname{supp}}
\newcommand{\bbR}{\mathbb{R}}
\newcommand{\bbH}{\mathbb{H}}
\newcommand{\bbC}{\mathbb{C}}
\newcommand{\bbS}{\mathbb{S}}
\newcommand{\bbN}{\mathbb{N}}
\newcommand{\calR}{\mathcal{R}}
\newcommand{\calS}{\mathcal{S}}
\newcommand{\cinf}{C^\infty}
\newcommand{\del}{\partial}
\newcommand{\bQ}{\mathbf{Q}}
\newcommand{\vep}{\varepsilon}
\newcommand{\nh}{\tfrac{n}{2}}
\newcommand{\Thetasc}{\Theta_{\rm sc}}
\DeclarePairedDelimiter\paren{(}{)}
\DeclarePairedDelimiter\set{\{}{\}}
\DeclarePairedDelimiter\sqbrak{[}{]}
\DeclarePairedDelimiter\abs{|}{|}
\DeclarePairedDelimiter\norm{\Vert}{\Vert}
\DeclarePairedDelimiter\brak{\langle}{\rangle}
\newcommand{\Hn}{\bbH^{n+1}}
\begin{document}
\title[Existence of resonances]{Existence of resonances for Schr\"odinger operators on hyperbolic space}
\author[Borthwick]{David Borthwick}
\address{Department of Mathematics, Emory University, Atlanta, GA 30322}
\email{dborthw@emory.edu}
\author[Wang]{Yiran Wang}
\address{Department of Mathematics, Emory University, Atlanta, GA 30322}
\email{yiran.wang@emory.edu}
\date{\today}

\begin{abstract}
We prove existence results and lower bounds for the resonances of Schr\"odinger operators 
associated to smooth, compactly support potentials on hyperbolic space.  The results are derived 
from a combination of heat and wave trace expansions and asymptotics of the scattering phase.
\end{abstract}

\maketitle

\section{Introduction}
 
This article is devoted to establishing lower bounds on the resonance count
for Schr\"odinger operators on the hyperbolic space $\bbH^{n+1}$.  Although such results are
well known in Euclidean scattering theory,
the literature for Schr\"odinger operators on hyperbolic space is comparatively
sparse.  Upper bounds on resonances for such operators were considered in \cite{Borthwick:2010, BC:2014}.
Most other recent papers dealing with Schr\"odinger operators on hyperbolic space 
have focused on applications to nonlinear Sch\"rodinger equations 
\cite{AV:2009,Banica:2007,BCD:2009,BCS:2008,BM:2015,Ionescu:2012,IS:2009}.
As far as we are aware, the literature contains no general existence results for resonances in this context.

Let $\lap$ denote the positive Laplacian operator on $\Hn$.  For 
$V \in \cinf_0(\Hn, \bbR)$, the Schr\"odinger operator $\lap+V$
has continuous spectrum $[n^2/4,\infty)$.  
The resolvent of $\lap +V$ is defined for $\re s > n/2$ by 
\begin{equation}\label{RV.def}
R_V(s) := (\lap + V - s(n-s))^{-1}.
\end{equation}
As an operator on weighted $L^2$ spaces, $R_V(s)$ admits a meromorphic extension to $s \in \bbC$,
with poles of finite rank, as described in \S\ref{setup.sec}.
The resonance set $\calR_V$ associated to $V$ consists of the poles of $R_V(s)$, repeated according to the
multiplicity given by
\begin{equation}\label{mv.def}
m_V(\zeta) := \rank \res_\zeta R_V(s).
\end{equation}

There are no eigenvalues embedded in the continuous spectrum, and no resonances on the line $\re s = n/2$ except possibly
at $n/2$.  For a proof, see for example Borthwick-Marzuola \cite[Thm.~1]{BM:2015}. 
The discrete spectrum of $\lap + V$ is therefore finite and lies below $n^2/4$.  
An eigenvalue $\lambda$ corresponds to a resonance $\zeta \in (n/2,n)$ 
such that $\lambda = \zeta(n-\zeta)$. 

The resonance counting function,
 \begin{equation}\label{NV.def}
 N_V(r) := \#\set*{ \zeta \in \calR_V: \abs{\zeta - \tfrac{n}2} \le r},
 \end{equation}
 satisfies a polynomial bound as $r \to \infty$ 
 \begin{equation}\label{NV.bound}
 N_V(r)  = O(r^{n+1}).
 \end{equation}
This estimate is covered by the more general result of Borthwick \cite[Thm.~1.1]{Borthwick:2008}, but with $\Hn$
as the background metric one could also give a simpler proof following the approach that
Guillop\'e-Zworski \cite{GZ:1995b} used for $n=1$.  
A sharp constant for the bound \eqref{NV.bound}, depending on the support of the potential, was obtained in 
Borthwick \cite{Borthwick:2010}.  The corresponding lower bound was shown to hold in a generic sense in
Borthwick-Crompton \cite{BC:2014}, but the existence question was not resolved.  

The existence problem for resonances looks quite different in even and odd dimensions.  In even dimensions, $\calR_0$ contains
resonances at negative integers, with multiplicities such that the polynomial bound \eqref{NV.bound}
is already saturated for $V=0$.  Therefore our goal in even dimensions is to distinguish
$\calR_V$ from $\calR_0$.  On the other hand, $\calR_0$ is empty for odd dimensional hyperbolic space.
In that case we seek lower bounds on $\calR_V$ itself.  

In the present paper, we will prove the following:
\begin{theorem}\label{main.thm}
Let $\calR_V$ denote the set of resonances of $\lap + V$ for  $V \in \cinf_0(\Hn,\bbR)$, with $N_V(r)$ the corresponding counting function.
\begin{enumerate}[label=(\roman*), parsep=4pt, leftmargin=2\parindent]
\item  If the dimension is even or equal to 3, then $\calR_V = \calR_0$ only if $V=0$.   
\item For even dimension $\ge 6$, if $V \ne 0$ then $\calR_V$ and $\calR_0$
differ by infinitely many points.  This conclusion holds also if $\int V\>dg \ge 0$ for $\dim=2$, and  if $\int V\>dg \ne 0$ for $\dim =4$.
\item  In all odd dimensions, if $\calR_V$ is not empty then the resonance set is infinite and $N_V(r) \ne O(r)$.
\end{enumerate}
\end{theorem}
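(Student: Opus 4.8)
The engine is a comparison between the small-time expansion of a relative heat trace and the structure of the scattering data, the latter encoded in the regularized scattering determinant and---in low dimensions---in a genuine Fredholm determinant whose zeros are the resonances.

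I would first record the heat side. Since $V\in\cinf_0(\Hn,\bbR)$, the operator $e^{-t(\lap+V)}-e^{-t\lap}$ is trace class, and the relative heat trace $\theta_V(t):=\tr\!\big(e^{-t(\lap+V)}-e^{-t\lap}\big)$ has an expansion $\theta_V(t)\sim\sum_{k\ge1}a_k\,t^{k-(n+1)/2}$ as $t\downarrow0$, whose coefficients are integrals over $\Hn$ of universal polynomials in $V$ and its derivatives. Constant curvature and integration by parts reduce the first two to $a_1=-(4\pi)^{-(n+1)/2}\int_{\Hn}V\,dg$ and $a_2=(4\pi)^{-(n+1)/2}\big(\tfrac12\int_{\Hn}V^2\,dg+\gamma_n\int_{\Hn}V\,dg\big)$ with $\gamma_n$ explicit. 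The point used throughout is the dichotomy: if $V\ne0$ then $(a_1,a_2)\ne(0,0)$, because $a_1=0$ forces $\int V\,dg=0$ and hence $a_2=\tfrac12(4\pi)^{-(n+1)/2}\int V^2\,dg>0$; moreover $a_1\ne0$ precisely when $\int V\,dg\ne0$, and $\int V\,dg\ge0$ gives $a_1\le0$. Through the Birman--Krein formula this expansion dictates the high-energy behavior of the relative scattering phase $\xi_V(\lambda)$ attached to $\det S_V(\tfrac n2-i\lambda)/\det S_0(\tfrac n2-i\lambda)$: the Weyl term vanishes (zeroth-order perturbation), the leading surviving term is a nonzero multiple of $\int V\,dg$ times $\lambda^{n-1}$, and the dependence on $\int V^2\,dg$ enters at the next relevant order---a power $\lambda^{n-3}$, or, in borderline even dimension, logarithmically through the coefficient $a_{(n+1)/2}$.

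On the resonance side, $\det S_V(s)/\det S_0(s)$ is meromorphic with divisor recording exactly $\calR_V$ against $\calR_0$ together with the reflected points $n-\zeta$, so the polynomial count \eqref{NV.bound} lets one write it as a ratio of genus-$\le n$ Weierstrass products times $e^{q(s)}$ with $\deg q\le n+1$, and the functional equation $\det S_V(s)\det S_V(n-s)=1$ forces $q(\tfrac n2+w)$ to be a constant plus an odd polynomial in $w$. Hence, if $\calR_V$ and $\calR_0$ differ in only finitely many points then $\xi_V(\lambda)$ is a constant plus an odd polynomial plus finitely many arctangent-type terms, so it carries no even positive power of $\lambda$ and no $\log\lambda$; if $\calR_V=\calR_0$ exactly it is literally a constant plus an odd polynomial. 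In dimension $3$, where $\calR_0=\emptyset$ and $VR_0(s)$ is Hilbert--Schmidt for $\re s$ large, I would instead use $D_V(s):=\det{}_2\!\big(I+VR_0(s)\big)$, which continues to an entire function with zero set $\calR_V$, satisfies $\log D_V(s)\to0$ as $\re s\to+\infty$, and whose Fredholm expansion there begins with a nonzero multiple of $(s-\tfrac n2)^{-1}\int_{\Hn}V^2\,dg$.

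Now the arguments run as follows. For (i) in even dimension: $\calR_V=\calR_0$ makes $\xi_V$ exactly a constant plus an odd polynomial, which cannot be asymptotic to a nonzero multiple of $\lambda^{n-1}\int V\,dg$ with $n-1$ even and positive unless $\int V\,dg=0$ (for $n=1$ one argues instead from the $\log\lambda$ coefficient), and once $\int V\,dg=0$ the next obstruction---the positive even power $\lambda^{n-3}$ with coefficient a positive multiple of $\int V^2\,dg$, or the $\log\lambda$ term when $n\le3$---forces $\int V^2\,dg=0$, so $V=0$. For (i) in dimension $3$: $\calR_V=\emptyset$ makes $D_V=e^{g}$ with $g$ polynomial, hence $g\equiv0$ since $\log D_V\to0$ at $+\infty$, so $D_V\equiv1$, which contradicts the Fredholm expansion unless $\int V^2\,dg=0$, so $V=0$. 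For (ii): in even dimension $\ge6$ the ``finitely many differences'' form of $\xi_V$ is incompatible with the heat asymptotics because the positive even power $\lambda^{n-1}$ ($n-1\ge4$) appears when $\int V\,dg\ne0$ and, if $\int V\,dg=0$, the positive even power $\lambda^{n-3}$ ($n-3\ge2$) appears with a positive coefficient; in dimension $4$ only $\lambda^{n-1}=\lambda^2$ is in reach, hence the hypothesis $\int V\,dg\ne0$, and in dimension $2$ the relevant terms sit at $\lambda^0$, so after reducing $\xi_V$ to a bounded function one compares its high- and low-energy limits by a Levinson-type identity, with $\int V\,dg\ge0$ controlling the low-energy contribution. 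For (iii): $N_V(r)=O(r)$ would force the canonical product built from $\calR_V$ to have convergence exponent $\le1$, hence $\det S_V$ (resp.\ $D_V$ in dimension $3$) to have order bounded accordingly, so $\xi_V$ (resp.\ $\log D_V$) could not grow as fast along the real axis as the heat-dictated term $\lambda^{n-1}\int V\,dg$ when $\int V\,dg\ne0$, nor accommodate the $\int V^2\,dg$ term when $\int V\,dg=0$; thus $N_V(r)\ne O(r)$, and in particular $\calR_V$ is infinite whenever nonempty (for $n+1=3$, finiteness already yields $V=0$ by the $\det{}_2$ argument).

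The step I expect to be the main obstacle is making the transfer from the heat expansion to the scattering-phase asymptotics precise enough---pushing the expansion of $\xi_V$ past the Weyl-leading order to where the $\int V^2\,dg$ dependence (and the logarithmic coefficient in low even dimension) becomes visible, which demands a genuine parametrix/trace-formula construction rather than a bare Tauberian transfer. Secondary difficulties are the low-energy (Levinson) analysis behind the $\int V\,dg\ge0$ hypothesis in dimension $2$ and the sharp growth bounds for $D_V$ underlying $N_V(r)\ne O(r)$ in dimension $3$.
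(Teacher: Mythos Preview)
For (i) and (ii) in even dimensions your strategy matches the paper's: a finite discrepancy between $\calR_V$ and $\calR_0$ makes $\sigma'_V-\sigma'_0$ an even rational function of $\xi$, while the expansion of Theorem~\ref{sigmap.thm} contains only odd powers of $\xi$ plus an $O(\xi^{-\infty})$ remainder, so $\sigma'_V\equiv 0$ and the first $(n-1)/2$ wave invariants vanish; the low-dimensional refinements are then extracted from the heat trace essentially as you outline. Your $\det{}_2$ route for $\bbH^3$ is a genuine and correct alternative to the paper's argument (which instead uses the Poisson formula to read off $a_2(V)=0$ from the wave trace), though the continuation of $D_V(s)$ past $\re s>1$ requires the usual cutoff device since $VR_0(s)$ is not Hilbert--Schmidt on $L^2(\bbH^3)$ there.

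Part (iii), however, has a real gap. The growth comparison you propose cannot succeed: the factorization $\tau(s)=\pm e^{q(s)}H_V(n-s)/H_V(s)$ carries a polynomial $q$ of degree up to $n+1$, odd about $s=\tfrac{n}{2}$, which already supplies odd powers of $\xi$ in $\sigma(\xi)$ up through $\xi^{n+1}$. In odd dimension ($n$ even) the heat-dictated expansion of $\sigma$ from Corollary~\ref{sigma.cor} consists precisely of odd powers $\xi^{n-1},\xi^{n-3},\dots$, all of which $q$ can absorb---indeed when $\calR_V=\emptyset$ the scattering phase is exactly such a polynomial and there is no contradiction, consistent with the fact that existence is \emph{not} established for odd dimension $\ge 5$. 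Your argument, taken literally, would prove $N_V(r)\ne O(r)$ whenever $V\ne 0$, which is stronger than the theorem claims and is not known. The paper's route is quite different. For ``finite $\Rightarrow$ empty'' it uses the Poisson formula to write $\Theta_V(t)=\tfrac12\sum_{\zeta\in\calR_V}e^{(\zeta-n/2)|t|}$ for $t\ne 0$, so $\lim_{t\to 0}\Theta_V(t)=\tfrac12\#\calR_V$, and then observes that the wave-trace expansion of Theorem~\ref{wave.exp.thm} has no $t^0$ term when $n$ is even. For $N_V(r)\ne O(r)$ it assumes $\calR_V$ infinite and invokes S\'a Barreto's argument: $N_V(r)=O(r)$ forces the partial sums $\sum_{|\xi_j|<r}1/\xi_j$ to be bounded, which in turn forces a non-integer-power term into the scattering-phase expansion, contradicting Corollary~\ref{sigma.cor}. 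This hinges on the precise structure of the expansion, not on a crude order comparison.
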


In even dimensions, we also show that the resonance set determines the scattering phase and wave trace completely, 
and in particular fixes all of the wave invariants.  See \S\ref{exist.sec} for the full set of inverse scattering results.

Theorem~\ref{main.thm} is derived from the asymptotic expansions of the scattering phase and heat and wave traces.
In the context of potential scattering in hyperbolic space, these expansions do not seem to have not been 
studied in the literature, so we give a full account of their adaptation to this setting.  The explicit formulas for the
wave invariants are stated in Proposition~\ref{d12.prop}.

The organization of the paper is as follows.  After reviewing some facts on the resolvent and its kernel in \S\ref{setup.sec},
we use the spectral resolution to define distributional traces in \S\ref{trace.sec}.
In \S\ref{bk.sec} we establish the Birman-Krein formula relating these traces to the scattering phase.
The Poisson formula expressing the wave trace as a sum over resonances is proven in \S\ref{poisson.sec}.
In \S\ref{wave.exp.sec} the asymptotic expansion of the wave trace at $t=0$ is established.  The corresponding heat
trace expansion is worked out in \S\ref{heat.sec}, which is then used to study asymptotics of the scattering
phase in \S\ref{scphase.sec}.  Finally, in \S\ref{exist.sec} these tools are applied to derive the existence results.

\medskip

\noindent
{\bf Acknowledgement} The authors are grateful for some very helpful comments and corrections by the journal referees.

\section{The resolvent}\label{setup.sec}

The resolvent of the free Laplacian on $\Hn$ is traditionally written with spectral parameter $s(n-s)$ as in \eqref{RV.def}.
The resolvent kernel is given by a well-known hypergeometric formula, derived by Patterson \cite[Prop~2.2]{Patterson:1989}:
\[
\begin{split}
R_0(s;z,w) &=  \frac{\pi^{-\frac{n}2} 2^{-2s-1}\Gamma(s)}{\Gamma(s-\frac{n}2+1)} \cosh^{-2s}(\tfrac12 d(z,w)) \\
&\qquad \times F(s,s-\tfrac{n-1}2, 2s-n+1; \cosh^{-2}(\tfrac12 d(z,w)),
\end{split}
\]
where $d(z,w)$ is the hyperbolic distance.
Using hypergeometric identities \cite[\S14.3(iii)]{NIST}, we can rewrite this formula as
\begin{equation}\label{R0.kernel}
R_0(s;z,w) = (2\pi)^{-\frac{n+1}2} \frac{\Gamma(s)}{{\sinh^\mu d(z,w)}} \bQ_\nu^\mu(\cosh d(z,w)),
\end{equation}
where $\nu := s - \tfrac{n+1}2$, $\mu := \frac{n-1}2$, and $\bQ_\nu^\mu$
denotes the normalized Legendre function,
\[
\bQ_\nu^\mu(x) := \frac{e^{-i \pi \mu}}{\Gamma(\mu+\nu+1)} Q_\nu^\mu(x).
\]
(Under this convention  $\bQ_\nu^\mu(x)$ is entire as a function of both indices.)
The factor $\Gamma(s)$ in \eqref{R0.kernel} has poles at negative integers, but these yield 
resonances only for $n+1$ even.  In odd dimensions the poles are canceled by zeroes of $\bQ_\nu^\mu$. 

Let $(r,\omega) \in [0,\infty) \times \bbS^n$ denote geodesic polar coordinates on $\Hn$.  We will take
\[
\rho := \frac{1}{\cosh r}
\]
as a boundary defining function for the radial compactification of $\Hn$ into a ball. 
The hypergeometric formula for $\bQ_\nu^\mu(x)$ \cite[\S3.2(5)]{Erdelyi} yields an expansion
of the resolvent kernel,
\begin{equation}\label{R0.expand}
R_0(s;z,w) = \pi^{-\frac{n}2} 2^{-s-1} \frac{\Gamma(s)}{\Gamma(s-\frac{n}2+1)} \sum_{k=0}^\infty \frac{a_k(s)}{(\cosh d(z,w))^{s+2k}},
\end{equation}
with $a_0(s) = 1$. In particular,
\begin{equation}\label{R0.infty}
R_0(s;z,w) = O(e^{-sd(z,w)})\text{ as }d(z,w) \to \infty,
\end{equation}
which shows that $R_0(s)$ extends meromorphically to $s \in \bbC$, as a bounded operator 
$\rho^N L^2(\Hn) \to \rho^{-N} L^2(\Hn)$ for $\re s > -N+\nh$.

For $V \in \cinf_0(\Hn, \bbR)$, the resolvent $R_V(s)$ defined in \eqref{RV.def} is related to $R_0(s)$ by the identity,
\begin{equation}\label{R0RV}
R_0(s) = R_V(s) (1 + VR_0(s)).
\end{equation}
The operator $1 + VR_0(s)$ is invertible by Neumann series for $\re s$ sufficiently large, and it follows 
from \eqref{R0.infty} that the operator $VR_0(s)$ is compact on 
$\rho^N L^2(\Hn)$ for $\re s > -N+\nh$.
Hence the analytic Fredholm theorem yields a meromorphic inverse $(1 + VR_0(s))^{-1}$, with poles of finite rank,
which is bounded on $\rho^N L^2(\Hn)$ for $\re s > -N+\nh$.
We thus obtain a meromorphic extension of $R_V(s)$ by setting
\[
R_V(s) = R_0(s) (1 + VR_0(s))^{-1},
\]
which is bounded as an operator $\rho^N L^2(\Hn) \to \rho^{-N} L^2(\Hn)$ for $\re s > -N+\nh$.

We can see from \eqref{R0.expand} that the free resolvent kernel $R_0(s;z,z')$ is 
polyhomogeneous as a function of $\rho(z')$ as $\rho(z') \to 0$, with leading term of order $\rho(z')^s$.
It follows from \eqref{R0RV} that the kernel of $R_V(s)$ has the same property.  
The Poisson kernel is defined as the leading coefficient in the expansion as $\rho(z') \to 0$, 
\begin{equation}\label{EV.def}
E_V(s;z,\omega')  := \lim_{r' \to \infty} \rho(z')^{-s} R_V(s; z,z'),
\end{equation}
Interpreting this function is as an integral kernel, with respect to the standard sphere metric, 
defines the Poisson operator,
\[
E_V(s): \cinf(\bbS^n) \to L^2(\Hn).
\] 
The Poisson operator maps
boundary data to solutions of the generalized eigenfunction equation $(\lap + V - s(n-s))u = 0$.

By Stone's formula, the continuous part of the spectral resolution of $\lap + V$ is given by the restriction of the operator 
$R_V(s) - R_V(n-s)$ to the line $\re s = n/2$.  This is related to the Poisson operator by the following identity:
\begin{equation}\label{RR.EE}
R_V(s) - R_V(n-s) = (n-2s) E_V(s) E_V(n-s)^t,
\end{equation}
as operators $\cinf_0(\Hn) \to L^2(\Hn)$, meromorphically for $s \in \bbC$.
The proof of \eqref{RR.EE} is essentially the same as in the case $n=1$ presented in \cite[Prop.~4.6]{B:STHS2}.

\section{Traces}\label{trace.sec}

Given $V \in \cinf_0(\Hn, \bbR)$ and $f \in \calS(\bbR)$, the operator $f(\lap + V) - f(\lap)$ is of trace class.  
In fact, the map 
\begin{equation}\label{rel.trace}
f \mapsto \tr \sqbrak*{f(\lap + V) - f(\lap)}.
\end{equation}
defines a tempered distribution.   For the proof, see Dyatlov-Zworski \cite[Thm.~3.50]{DZbook}, which applies to
the hyperbolic setting with only minor modifications.

The spectral theorem gives the representation
\[
f(\lap + V) =  \lim_{\vep\to 0}  \frac{1}{2\pi i} \int_{-\infty}^\infty 
\sqbrak[\Big]{(\lap + V - \lambda- i\vep)^{-1} -  (\lap + V - \lambda+i\vep)^{-1}} f(\lambda) d\lambda,
\]
with the limit taken in the operator-norm topology.   
We can separate the contributions from the discrete and continuous spectrum,
and write the continuous part in terms of $R_V(s)$ by setting $s(n-s) = \lambda \pm i \vep$.  The result is
\begin{equation}\label{fDV.tlim}
\begin{split}
f(\lap + V) &=  \lim_{\vep \to 0}  \frac{1}{2\pi i} \int_{-\infty}^\infty 
\sqbrak[\Big]{R_V(\tfrac{n}2 - i \xi + \vep) - R_V(\tfrac{n}2 + i \xi + \vep)} f(\tfrac{n^2}4 + \xi^2) \xi\>d\xi \\
&\qquad + \sum_{j=1}^d f(\lambda_j) \phi_j \otimes \overline{\phi}_j,
\end{split}
\end{equation}
where the $\lambda_1,\dots, \lambda_d$ are the eigenvalues of $\lap+V$, with corresponding normalized eigenvectors $\phi_j$.

The self-adjointness of $\lap + V$ implies an estimate
\[
\norm{(\lap + V - s(n-s)) u} \ge \abs{\im s(n-s)} \norm{u}^2.
\]
This shows that a pole of $R_V(s)$ at $s = \tfrac{n}2$ could have at most second order.  
(This argument is analogous to the Euclidean case; see \cite[Lemma~3.16]{DZbook}.)
A pole of  order two can occur only if $n^2/4$
is an eigenvalue, which is ruled out by Bouclet \cite[Cor.~1.2]{Bouclet:2013}.  Therefore $R_V(s)$ has at most a first order pole at $n/2$.
The integrand in \eqref{fDV.tlim} is thus continuous at $\vep=0$ in the operator topology, because a pole would be canceled by 
the extra factor of $\xi$.  
Taking the limit $\vep \to 0$ gives
\[
\begin{split}
f(\lap + V) &=  \frac{1}{2\pi i} \int_{-\infty}^\infty \sqbrak[\Big]{R_V(\tfrac{n}2 - i \xi) - R_V(\tfrac{n}2 + i \xi)} 
f(\tfrac{n^2}4 + \xi^2) \xi\>d\xi \\
&\qquad + \sum_{j=1}^d f(\lambda_j) \phi_j \otimes \overline{\phi}_j,
\end{split}
\]

Let us define the integral kernel of the spectral resolution as 
\begin{equation}\label{KV.def}
K_V(\xi; z,w) := \frac{\xi}{2\pi i} \sqbrak[\Big]{R_V(\tfrac{n}2 - i \xi; z,w) - R_V(\tfrac{n}2 + i \xi; z,w)}. 
\end{equation}
For $V=0$ this kernel can be written explicitly, using \eqref{R0.kernel} and the Legendre connection formula \cite[\S14.9(iii)]{NIST},
\[
\frac{\bQ_{-\nu-1}^\mu(x)}{\Gamma(\mu+\nu+1)} - \frac{\bQ_{\nu}^\mu(x)}{\Gamma(\mu-\nu) } = \cos(\pi \nu) P_{\nu}^{-\mu}(x).
\]
The result is
\begin{equation}\label{imR0}
K_0(\xi;z,w) :=  c_n(\xi) (\sinh r)^{-\mu} P_{-\frac12 + i\xi}^{-\mu}(\cosh r),
\end{equation}
where $\mu = \tfrac{n-1}2$, $r = d(z,w)$, and
\[
c_n(\xi) := (2\pi)^{-\mu} \xi \sinh (\pi \xi) \Gamma(\tfrac{n}2 + i\xi) \Gamma(\tfrac{n}2 - i\xi).
\]
The hypergeometric expansion \cite[eq.~(14.3.9)]{NIST} of $P_\nu^{-\mu}(x)$ near $x=1$ shows that 
$K_0(\xi;z,w)$ is smooth for all $z,w \in \Hn\times \Hn$. 

For the Schr\"odinger operator case, we note that \eqref{R0RV} yields the identity
\[
R_V(s) - R_V(n-s) = (1 - R_V(s)V)(R_0(s) - R_0(n-s))(1 - VR_V(n-s)).
\]
Since $R_0(s) - R_0(n-s)$ has a smooth kernel for $\re s = n/2$, $V\in \cinf_0(\Hn, \bbR)$, and 
$R_V(s)$ is a pseudodifferential operator of order $-2$, the identity implies that $R_V(s) - R_V(n-s)$ also
has a smooth kernel for $\re s = n/2$, $s \ne n/2$.  The kernel
$K_V(\xi;\cdot,\cdot)$ is thus continuous for $\xi \in \bbR$ and smooth as a function on $\Hn\times \Hn$.

In Borthwick-Marzuola \cite[Prop.~6.1]{BM:2015}, it was shown that $\im R_V(\nh + i\xi;z,w)$
satisfies a polynomial bound as a function of $\xi \in \bbR$, uniformly in $\Hn\times \Hn$,
provided there is no resonance at $s = \nh$.  This restriction can be removed for the $K_V$ estimate 
because of the extra factor of $\xi$ in \eqref{KV.def}, since, as noted above, $R_V(s)$ has at most a first order pole 
at $s= \nh$. We can thus use the spectral resolution formula to write the kernel of $f(\lap + V)$ as
\begin{equation}\label{fv.stone}
f(\lap + V)(z,w) = \int_{-\infty}^\infty K_V(\xi;z,w) f(\tfrac{n^2}4 + \xi^2)\> d\xi 
+ \sum_{j=1}^d f(\lambda_j) \phi_j(z) \bar\phi_j(w).
\end{equation}
Since $f(\lap + V) - f(\lap)$ is trace-class and has a continuous kernel, 
the trace can be computed as an integral over the kernel by
Duflo's theorem \cite[Thm.~V.3.1.1]{Duflo}.  This proves the following:
\begin{proposition}\label{trace.Kf.prop}
For $V \in \cinf_0(\Hn, \bbR)$, 
\[
\begin{split}
\tr \sqbrak*{f(\lap + V) - f(\lap)} &=  \int_{\Hn} \int_{-\infty}^\infty \sqbrak[\big]{K_V(\xi;z,z) - K_0(\xi,z,z)} f(\tfrac{n^2}4 + \xi^2)\> d\xi \> dg(z) \\
&\qquad + \sum_{j=1}^d f(\lambda_j).
\end{split}
\]
\end{proposition}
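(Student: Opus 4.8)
The plan is to combine the spectral-resolution formula \eqref{fv.stone} for $\lap+V$ with its $V=0$ counterpart, and then feed the difference of kernels into Duflo's trace formula. The strategy splits naturally into two parts: first justify that the trace of $f(\lap+V)-f(\lap)$ genuinely equals the integral over the diagonal of the (continuous) kernel difference, and second handle the discrete-spectrum contribution.

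First I would record that \eqref{fv.stone}, applied once with $V$ and once with $V=0$, gives pointwise on the diagonal
\[
\big(f(\lap+V)-f(\lap)\big)(z,z) = \int_{-\infty}^\infty \big[K_V(\xi;z,z)-K_0(\xi;z,z)\big] f(\tfrac{n^2}4+\xi^2)\,d\xi + \sum_{j=1}^d f(\lambda_j)\,\abs{\phi_j(z)}^2,
\]
where I have used $\phi_j(z)\bar\phi_j(z)=\abs{\phi_j(z)}^2$. Since $f\in\calS(\bbR)$ and, as recalled in \S\ref{trace.sec}, both $K_V(\xi;z,z)$ and $K_0(\xi;z,z)$ satisfy polynomial bounds in $\xi$ that are uniform on $\Hn\times\Hn$ (and since $V$ is compactly supported, the difference $K_V-K_0$ is itself compactly supported in $z$, so the $z$-integral is over a fixed compact set), the double integral on the right is absolutely convergent; this both makes the kernel of $f(\lap+V)-f(\lap)$ a continuous, integrable function on the diagonal and lets me invoke Fubini to interchange the $\xi$- and $z$-integrations at the end.

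Second I would apply Duflo's theorem \cite[Thm.~V.3.1.1]{Duflo}: the operator $f(\lap+V)-f(\lap)$ is trace class (as recalled from \cite[Thm.~3.50]{DZbook}) and has continuous kernel, so its trace equals the integral of the kernel over the diagonal,
\[
\tr\big[f(\lap+V)-f(\lap)\big] = \int_{\Hn} \big(f(\lap+V)-f(\lap)\big)(z,z)\,dg(z).
\]
Substituting the pointwise identity above, using $\int_{\Hn}\abs{\phi_j(z)}^2\,dg(z)=1$ by the normalization of the eigenvectors, and swapping the order of integration in the continuous term yields exactly the claimed formula. The main obstacle is the integrability/uniformity bookkeeping that legitimizes both the application of Duflo's theorem (which needs the kernel to be genuinely continuous and the operator trace class, with the two notions of trace agreeing) and the Fubini interchange — but all the required ingredients, namely the trace-class property, the smoothness and polynomial $\xi$-bounds on $K_V$ and $K_0$, and the compact support of $V$, have already been assembled in \S\ref{trace.sec}, so this step is routine rather than delicate.
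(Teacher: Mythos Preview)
Your approach is essentially the paper's: write the kernel via \eqref{fv.stone} for both $V$ and $V=0$, note the difference operator is trace-class with continuous kernel, and apply Duflo's theorem. One factual slip: the difference $K_V(\xi;z,z)-K_0(\xi;z,z)$ is \emph{not} compactly supported in $z$ --- the resolvent identity $R_V-R_0=-R_VVR_0$ shows the diagonal kernel decays as $z$ moves away from $\supp V$ but does not vanish outside it. Fortunately this claim is not load-bearing: no Fubini interchange is actually needed, since the integration order $\int_{\Hn}\int_{-\infty}^\infty$ in the proposition is exactly what Duflo combined with \eqref{fv.stone} produces directly, and the integrability of the diagonal kernel is part of the conclusion of Duflo's theorem rather than a hypothesis you must verify separately.
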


\section{Birman-Krein formula}\label{bk.sec}

The Birman-Krein formula relates the spectral resolution of $\lap + V$ to the scattering matrix.  This formula
provides the crucial link between the traces discussed in \S\ref{trace.sec} and the resonance set.  
The formula for the hyperbolic case is analogous to the Euclidean version \cite[Thm.~3.51]{DZbook}.

The scattering matrix associated to $V$ is defined as follows.
The Poisson operator maps a function $f \in \cinf(\bbS^n)$ to a generalized eigenfunction $E_V(s) f$, 
which admits an asymptotic expansion with leading terms
\begin{equation}\label{eigf.exp}
(2s-n) E_V(s) f \sim \rho^{n-s} f + \rho^s f',
\end{equation}
where $f' \in \cinf(\bbS^n)$ for $\re s = n/2$, $s \ne n/2$.  The structure of this expansion 
is well known and can be deduced from the resolvent identity \eqref{R0RV}.

The scattering matrix $S_V(s)$ is a family of pseudodifferential operators $S_V(s)$ on 
$\bbS^n$ that intertwines the leading coefficients of \eqref{eigf.exp},
\[
S_V(s): f \mapsto f'.
\]
For appropriate choices of $s$, we can interpret $f$ as incoming boundary data, and $f'$ as the corresponding
outgoing data.  By the meromorphic continuation of the resolvent, $S_V(s)$ extends meromorphically to $s \in \bbC$.
The identities
\begin{equation}\label{SV.inv}
S_V(s)^{-1} = S_V(n-s),
\end{equation}
and
\begin{equation}\label{ESE}
E_V(n-s)S_V(s) = -E_V(s),
\end{equation}
which follow from \eqref{eigf.exp}, hold meromorphically in $s$.

The integral kernel of the scattering matrix (with respect to the standard sphere metric)
can be derived from the resolvent by a boundary limit analogous to \eqref{EV.def},
\[
S_V(s;\omega, \omega') := (2s-n) \lim_{\rho,\rho' \to 0} (\rho\rho')^{-s} R_V(s; z,z'),
\]
for $\omega \ne \omega'$.  We can thus see from \eqref{R0RV} that
\[
S_V(s) = S_0(s) - (2s-n) E_V(s)VE_0(s). 
\]
This gives a formula for the relative scattering matrix,
\begin{equation}\label{srel.ee}
S_V(s) S_0(s)^{-1} = I + (2s-n) E_V(s)VE_0(n-s).
\end{equation}
Since $(2s-n) E_V(s)VE_0(n-s)$ is a smoothing operator, $S_V(s)S_0(s)^{-1}$ is of determinant class.
We can thus define the relative scattering determinant,
\[
\tau(s) := \det \sqbrak*{S_V(s)S_0(n-s)}.
\]
By \eqref{SV.inv}, the scattering determinant satisfies 
\begin{equation}\label{tau.refl}
\tau(s)\tau(n-s) = 1.
\end{equation}
Also, since $S_V(s)$ is unitary on the critical line,   $\abs{\tau(s)} = 1$ for $\re s = n/2$.

We can evaluate $\tau(\tfrac{n}2)$ by noting that
\begin{equation}\label{SV.n2}
S_V(\nh) = -I + 2P,
\end{equation}
where $P$ is an orthogonal projection of rank $m_V(\tfrac{n}2)$.  
(See \cite[Lemma~8.9]{B:STHS2} for the argument.)   This implies that 
\[
\tau(\tfrac{n}2) = (-1)^{m_V(\tfrac{n}2)}.
\]

The scattering phase $\sigma(\xi)$ for $\xi \in \bbR$ is defined as
\[
\sigma(\xi) :=  \frac{i}{2\pi} \log \frac{\tau(\frac{n}2 + i\xi)}{\tau(\frac{n}2)},
\]
with the branch of log chosen continuously from $\sigma(0) := 0$.  The reflection formula \eqref{tau.refl} implies that
\[
\sigma(-\xi) = -\sigma(\xi).
\]
We will be particularly interested in the derivative of the scattering phase.  By Gohberg-Krein \cite[\S IV.1]{GK:1969},
\[
\begin{split}
\frac{\tau'}{\tau}(s) &= \tr \sqbrak*{\paren*{S_V(s)S_0(n-s)}^{-1} \frac{d}{ds}\paren*{S_V(s)S_0(n-s)}} \\
&= \tr \sqbrak[\Big]{ S_V(n-s) S_V'(s) - S_0(s) S_0'(n-s)},
\end{split}
\]
where $S_V'(s) := \del_s S_V(s)$.
For the scattering phase this gives
\begin{equation}\label{logp.tau}
\sigma'(\xi) =  -\frac{1}{2\pi} \tr \sqbrak[\Big]{ S_V(\tfrac{n}2 - i\xi) S_V'(\tfrac{n}2 + i\xi) - S_0(\tfrac{n}2 + i\xi) S_0'(\tfrac{n}2 - i\xi)}.
\end{equation} 

\begin{theorem}[Birman-Krein formula]\label{BK.thm}
For $V \in L^\infty_{\rm cpt}(\Hn, \bbR)$ and $f \in \calS(\bbR)$, 
\[
\begin{split}
\tr \sqbrak[\big]{f(\lap + V) - f(\lap)} &= \int_{0}^\infty \sigma'(\xi) f(\tfrac{n^2}4 + \xi^2) \, d\xi \\
&\qquad + \sum_{j=1}^{d} f(\lambda_j) + \frac12 m_V(\tfrac{n}2) f(\tfrac{n^2}4).
\end{split}
\]
where $\lambda_1,\dots,\lambda_d$ are the eigenvalues of $\lap+V$, and $m_V(\tfrac{n}2)$ is the multiplicity of $n/2$ as
a resonance of $\lap+V$.
\end{theorem}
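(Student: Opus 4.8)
The plan is to combine Proposition~\ref{trace.Kf.prop} with the Birman--Krein identification of the integrated relative spectral density as the scattering-phase derivative, carried out via a Maass--Selberg computation on geodesic balls. Since all quantities depend continuously on $V$ in $L^\infty_{\rm cpt}(\Hn,\bbR)$, it suffices to treat $V\in\cinf_0(\Hn,\bbR)$, to which the constructions of \S\ref{setup.sec}--\S\ref{trace.sec} apply directly.

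The first step is to rewrite $K_V$ through the Poisson operator. Setting $s=\tfrac{n}2-i\xi$ in \eqref{RR.EE} gives $n-2s=2i\xi$, so \eqref{KV.def} becomes
\begin{equation}\label{KV.EE}
K_V(\xi;z,w)=\frac{\xi^2}{\pi}\bigl[E_V(\tfrac{n}2-i\xi)\,E_V(\tfrac{n}2+i\xi)^t\bigr](z,w),
\end{equation}
and likewise with $V=0$. Thus $\int_{\Hn}[K_V(\xi;z,z)-K_0(\xi;z,z)]\,dg$ is a pairing of generalized eigenfunctions; from \eqref{eigf.exp} the integrand is $O(\rho(z)^n)$ as $\rho(z)\to0$, with an oscillatory part $\propto\rho^{\pm2i\xi}$ and a non-oscillatory part, so the $z$-integral must be understood via truncation to a geodesic ball $B_R=\{r\le R\}$. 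To evaluate $\int_{B_R}[K_V(\xi;z,z)-K_0(\xi;z,z)]\,dg$ I would use Green's identity: the generalized eigenfunctions $E_V(\tfrac n2-i\xi;\cdot,\omega)$ and $E_V(\tfrac n2+i\xi;\cdot,\omega)$ both solve $(\lap+V-(\tfrac{n^2}4+\xi^2))u=0$, so applying Green's identity to $E_V(\tfrac n2-i\xi;\cdot,\omega)$ against the nearby solution $E_V(\tfrac n2+i\xi';\cdot,\omega)$ and differentiating in $\xi'$ at $\xi'=\xi$ converts the volume integral into a boundary integral on $\{r=R\}$. Inserting the asymptotic expansion \eqref{eigf.exp}, whose leading coefficients are controlled by $S_V(\tfrac n2\pm i\xi)$ and, after the $\xi'$-differentiation, by $\del_s S_V$, and letting $R\to\infty$: the $R$-dependent pieces and the non-oscillatory contributions cancel between the $V$ and $V=0$ computations by unitarity and reciprocity of $S_V$ and $S_0$ on the critical line; the surviving terms, once the $\omega$-integration is recognized as a trace of the corresponding operators on $\bbS^n$, combine into $-\tfrac{1}{2\pi}\tr\bigl[S_V(\tfrac n2-i\xi)S_V'(\tfrac n2+i\xi)-S_0(\tfrac n2+i\xi)S_0'(\tfrac n2-i\xi)\bigr]$, which is $\sigma'(\xi)$ by \eqref{logp.tau}. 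Tracking the prefactor $\tfrac{\xi^2}\pi$ in \eqref{KV.EE} against the $\tfrac1{2\xi}$ produced by the $\xi'$-derivative, this shows that the finite part of $\int_{B_R}[K_V(\xi;z,z)-K_0(\xi;z,z)]\,dg$ as $R\to\infty$ equals $\tfrac12\sigma'(\xi)$ for every $\xi\ne0$.

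It then remains to reconcile this with the iterated integral $\int_{\Hn}\int_{-\infty}^\infty(\cdots)\,d\xi\,dg$ of Proposition~\ref{trace.Kf.prop}, in which the $\xi$-integration must be done first (it converges, the $\xi$-oscillation of the integrand producing rapid decay in $\rho(z)$). The discrepancy between the two orders of integration is localized at $\xi=0$, where the boundary term $\propto\tr\bigl[S_V(\tfrac n2+i\xi)-S_0(\tfrac n2+i\xi)\bigr]$ stops oscillating; a principal-value (Sokhotski--Plemelj) computation of its contribution, using \eqref{SV.n2} together with $S_0(\tfrac n2)=-I$ (the free operator has no resonance at $\tfrac n2$), so that $\tr\bigl[S_V(\tfrac n2)-S_0(\tfrac n2)\bigr]=2m_V(\tfrac n2)$, produces precisely the extra term $\tfrac12 m_V(\tfrac n2)f(\tfrac{n^2}4)$. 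Together with the eigenvalue sum already present in Proposition~\ref{trace.Kf.prop}, and the parity of $\sigma'$ (which lets $\int_{-\infty}^\infty$ be folded onto $\int_0^\infty$), this gives the stated formula.

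The hard part is the regularization: justifying the interchange of the divergent $z$-integral with the $\xi$- and $\omega$-integrations, controlling the $R\to\infty$ limit uniformly enough in $\xi$ to integrate against an arbitrary $f\in\calS(\bbR)$, and---most delicately---the bookkeeping at the threshold $\xi=0$, where the resonance at $s=\tfrac n2$ must be separated cleanly from the scattering-phase contribution. This is carried out in detail for $n=1$ in Borthwick \cite[\S8]{B:STHS2}, and the argument transfers to $\Hn$ with only notational changes; alternatively one could verify the trace-class hypotheses of the abstract Birman--Krein theorem \cite[Thm.~3.51]{DZbook} and convert from the abstract spectral shift function, again with separate care at the threshold.
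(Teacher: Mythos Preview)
Your approach is essentially the same as the paper's: start from Proposition~\ref{trace.Kf.prop}, rewrite $K_V-K_0$ via \eqref{RR.EE} in terms of Poisson operators, truncate the spatial integral to a geodesic ball, apply the Maass--Selberg/Green identity with a deformed spectral parameter and differentiate, and read off the boundary contributions from the asymptotics \eqref{eigf.exp}. The only organizational difference is that the paper does not take a ``finite part'' and then separately repair the threshold; instead it computes the truncated integral $I_\vep(\xi)$ exactly, obtaining $\tfrac12\sigma'(\xi)$ plus explicit oscillatory remainders $\tfrac{\vep^{\mp 2i\xi}}{8\pi i\xi}\tr\bigl[S_V(\tfrac{n}2\mp i\xi)-S_0(\tfrac{n}2\mp i\xi)\bigr]$, integrates these against $f$, and evaluates the limit $\vep\to0$ via the Riemann--Lebesgue lemma (not Sokhotski--Plemelj), which picks out $\tfrac14\varphi(0)=\tfrac12 m_V(\tfrac{n}2)f(\tfrac{n^2}4)$. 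Your sentence that ``the $R$-dependent pieces \dots\ cancel between the $V$ and $V=0$ computations'' is slightly misleading---the oscillatory boundary terms proportional to $\tr[S_V-S_0]$ do \emph{not} cancel and are precisely what produces the threshold contribution---but you clearly recognize this in the next paragraph, so this is wording rather than a gap.
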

\begin{proof}
For convenience, let us assume that the discrete spectrum of $\lap + V$ is empty, since the contribution to the 
trace from $\lambda_1,\dots,\lambda_d$ is easily dealt with. 
Under this assumption, Proposition~\ref{trace.Kf.prop} gives
\[
\tr \sqbrak*{f(\lap + V) - f(\lap)} =  \int_{\Hn} \int_{-\infty}^\infty \sqbrak[\big]{K_V(\xi;z,z) - K_0(\xi,z,z)} f(\tfrac{n^2}4 + \xi^2)\> d\xi \> dg(z).
\]
If the integral over $z$ is restricted to the set $\set{\rho(z) \ge \vep}$, 
then switching the order of integration is justified by the uniform
polynomial bounds on $K_V$.  We can thus write
\begin{equation}\label{Ie.lim}
\tr \sqbrak*{f(\lap + V) - f(\lap)} = \lim_{\vep \to 0} \int_{-\infty}^\infty I_\vep(\xi) f(\tfrac{n^2}4 + \xi^2)\, d\xi, 
\end{equation}
where
\[
I_\vep(\xi) := \int_{\set{\rho\ge \vep}}  \sqbrak[\big]{K_V(\xi;z,z) - K_0(\xi,z,z)} d\omega'\, dg(z).
\]

To compute $I_\vep(\xi)$, we first use \eqref{RR.EE} to write $K_V$ in terms of $E_V$, 
\begin{equation}\label{Ivep1}
\begin{split}
I_\vep(\xi) &:= - \frac{(2s-n)^2}{4\pi}  \int_{\set{\rho\ge \vep}} \int_{\bbS^n} \Bigl[ E_V(s;z,\omega')E_V(n-s;z,\omega') \\
&\qquad - E_0(s;z,\omega')E_0(n-s;z,\omega') \Bigr] d\omega'\>dg(z),
\end{split}
\end{equation}
where $d\omega'$ is the standard sphere measure.  We are using the identification $s = \nh+i\xi$ freely here, 
to simplify notation where possible.
The next step is to apply a Maass-Selberg identity as described in the proof of \cite[Prop.~10.4]{B:STHS2}.  
Because $E_V(s)$ satisfies the eigenvalue equation, we can write
\[
E_V(s') E(n-s)^t = \frac{1}{s(n-s) - s'(n-s')} \sqbrak[\Big]{E_V(s') \lap E(n-s)^t - \lap E_V(s') E(n-s)^t}
\]
Applying this to \eqref{Ivep1} yields
\[
\begin{split}
I_\vep(\xi) &:=  - \frac{1}{4\pi} \lim_{s'\to s} \frac{2s-n}{s'-s} \int_{\set{\rho\ge \vep}} \int_{\bbS^n} \Bigl[ E_V(s';z,\omega') \lap E_V(n-s;z,\omega') \\
&  \qquad - \lap E_V(s'; z,\omega') E_V(n-s; z,\omega') 
- E_0(s'; z,\omega') \lap E_0(n-s; z,\omega')  \\
&\qquad - E_0(s';z,\omega') \lap E_0(n-s;z,\omega') \Bigr] d\omega'\>dg(z),
\end{split}
\]
with $\lap$ acting on the $z$ variable.  By Green's formula, applied to the region $\set{\rho = \vep}$,
\[
\begin{split}
I_\vep(\xi) &:= \frac{1}{4\pi}  \lim_{s'\to s} \frac{2s-n}{s'-s} \int_{\set{\rho = \vep}} \int_{\bbS^n} \Bigl[ E_V(s';z,\omega') \del_r E_V(n-s;z,\omega') \\
&  \qquad - \del_r  E_V(s'; z,\omega') E_V(n-s; z,\omega') 
- E_0(s'; z,\omega') \del_r  E_0(n-s; z,\omega')  \\
&\qquad - E_0(s';z,\omega') \del_r E_0(n-s;z,\omega') \Bigr]
\sinh^n r  \>d\omega'\>d\omega,
\end{split}
\]
where $z = (r,\omega)$ in geodesic polar coordinates.  The same calculation with $s=s'$ yields zero, so we can
evaluate the limit $s' \to s$ as a derivative,
\[
\begin{split}
I_\vep(\xi) &=  \frac{2s-n}{4\pi} \int_{\bbS^n} \int_{\bbS^n} 
\Bigl[  E_V'(s; z,\omega') \del _r E_V(n-s; z,\omega')  \\
&  \qquad - \del _r E_V'(s; z,\omega') E_V(n-s; z,\omega') 
- E_0'(s; z,\omega') \del _r E_0(n-s; z,\omega')  \\
&  \qquad + \del _r E_0'(s; z,\omega') E_0(n-s; z,\omega') \Bigr]
\sinh^n r  \>d\omega'\>d\omega \Big|_{r = \cosh^{-1}(1/\vep)},
\end{split}
\]
where $E_V' = \del_s E_V$.
The integrand can be simplified using the identity \eqref{SV.inv} and the distributional asymptotic 
\[
(2s-n)E_V(s;z,\omega') \sim \rho^{n-s} \delta_\omega(\omega') + \rho^s S_V(s;\omega,\omega'),
\]
which follows from \eqref{eigf.exp}.
After cancelling terms between $E_V(s)$ and $E_0(s)$, we find that
\begin{equation}\label{Iep.calc}
\begin{split}
I_\vep(\xi) &=  - \frac{1}{4\pi} \tr \Bigl[ S_V(\tfrac{n}2 - i\xi) S_V'(\tfrac{n}2 + i\xi)
- S_0(\tfrac{n}2 - i\xi) S_0'(\tfrac{n}2 + i\xi) \Bigr] \\
& \quad + \frac{\vep^{-2i\xi}}{8\pi i\xi} \tr \Bigl[ S_V(\tfrac{n}2 - i\xi)- S_0(\tfrac{n}2 - i\xi) \Bigr] \\
& \quad - \frac{\vep^{2i\xi}}{8\pi i\xi} \tr \Bigl[ S_V(\tfrac{n}2 + i\xi)- S_0(\tfrac{n}2 + i\xi) \Bigr] \\
& \quad + o(\vep).
\end{split}
\end{equation}

The first trace in \eqref{Iep.calc} reduces to $\tfrac12\sigma'(\xi)$ by \eqref{logp.tau}.  Thus, applying \eqref{Iep.calc} in \eqref{Ie.lim} gives
\begin{equation}\label{trf.alim}
\begin{split}
\tr \sqbrak[\big]{f(\lap + V) - f(\lap)} &= \frac12 \int_{-\infty}^\infty \sigma'(\xi) f(\tfrac{n^2}4 + \xi^2) d\xi \\
&\qquad + \lim_{a \to \infty} \int_{-\infty}^\infty \sqbrak*{\frac{e^{i\xi a}}{8\pi i\xi} \varphi(\xi) - \frac{e^{-i\xi a}}{8\pi i\xi} \varphi(-\xi)} d\xi,
\end{split}
\end{equation}
where we have substituted $\vep = e^{-a/2}$, and
\begin{equation}\label{varphi.def}
\varphi(\xi) := \tr \Bigl[ S_V(\tfrac{n}2 - i\xi)- S_0(\tfrac{n}2 - i\xi) \Bigr] f(\tfrac{n^2}4 + \xi^2).
\end{equation}

To evaluate the limit $a \to \infty$ in \eqref{trf.alim}, we need to control the growth of $\varphi(\xi)$.
We can argue as in \cite[Lemma~3.3]{BC:2014} that for $\chi \in \cinf_0(\bbH)$, equal to 1 on the support of $V$, 
\[
S_V(s) - S_0(s) = -(2s-n) E_0(s)^t \chi (1 + VR_0(s) \chi)^{-1} V E_0(s).
\]
The Hilbert-Schmidt norms of the cutoff factors $\chi E_0(\nh \pm i\xi)$ are $O(\abs{\xi}^{-1})$ by \cite[Lemma~3.3]{BC:2014}.
The operator norm of $(1 + VR_0(\nh \pm i\xi) \chi)^{-1}$ is $O(1)$ by the cutoff resolvent bound from Guillarmou \cite[Prop.~3.2]{Gui:2005c}.
Therefore the trace in \eqref{varphi.def} has at most polynomial growth, and $\varphi$ is integrable over $\xi \in \bbR$.

The Riemann-Lebesgue lemma gives
\[
\lim_{a \to \infty} \int_{\abs{\xi} >1} \sqbrak*{\frac{e^{i\xi a}}{8\pi i\xi} \varphi(\xi) - \frac{e^{-i\xi a}}{8\pi i\xi} \varphi(-\xi)} d\xi = 0,
\]
as well as 
\[
\lim_{a \to \infty}  \int_{-1}^1 \frac{e^{\pm i\xi a}}{8\pi i\xi} \sqbrak[\Big]{ \varphi(\pm \xi) - \varphi(0)} d\xi = 0.
\]
We can thus drop the portion of the integral
with $\abs{\xi} >1$ and replace
$\varphi(\pm\xi)$ by $\varphi(0)$ for $\abs{\xi} \le 1$ before taking the limit.  This reduces the final term in \eqref{trf.alim} to
\[
\begin{split}
\lim_{a \to \infty} \int_{-\infty}^\infty \sqbrak*{\frac{e^{i\xi a}}{8\pi i\xi} \varphi(\xi) - \frac{e^{-i\xi a}}{8\pi i\xi} \varphi(-\xi)} d\xi
&= \lim_{a \to \infty} \int_{-1}^1 \frac{\sin(\xi a)}{4\pi \xi} \varphi(0) d\xi \\
&= \varphi(0) \int_{-\infty}^\infty \frac{\sin(\xi)}{4\pi \xi} d\xi \\
&= \frac14 \varphi(0).
\end{split}
\]
To complete the argument, we note that \eqref{SV.n2} implies that
\[
\tr \Bigl[ S_V(\tfrac{n}2)- S_0(\tfrac{n}2) \Bigr] = 2m_V(\tfrac{n}2).
\]
\end{proof}

\section{Poisson formula}\label{poisson.sec}

The Poisson formula expresses the trace of the wave group as a sum over the resonance set.  
The relative wave trace,
\begin{equation}\label{ThetaV.def}
\Theta_V(t) := \tr\sqbrak*{\cos  \paren*{t\sqrt{\lap + V - n^2/4}} - \cos \paren*{t\sqrt{\lap - n^2/4}}},
\end{equation}
is defined distributionally as in \S\ref{trace.sec}.  That is, for $\psi \in \cinf_0(\bbR)$, 
\[
\paren*{\Theta_V, \psi} := \tr \sqbrak[\big]{f(\lap + V) - f(\lap)},
\]
where 
\begin{equation}\label{f.psi}
f(x) := \chi(x) \int_{-\infty}^\infty \cos\paren*{t \sqrt{x - n^2/4}} \psi(t)\>dt,
\end{equation}
with $\chi$ a smooth cutoff which equals $1$ on the spectrum of $\lap + V - n^2/4$ and vanishes on $(-\infty, c]$ for some $c<0$.
The cutoff is a technicality, included so that $f \in \calS(\bbR)$.

\begin{theorem}[Poisson formula]\label{poisson.thm}
For a potential $V \in \cinf_0(\Hn, \bbR)$, 
\begin{equation}\label{poisson.formula}
t^{n+1} \Theta_V =  t^{n+1} \sqbrak[\Bigg]{ \frac12 \sum_{\zeta\in \calR_V} e^{(\zeta-\frac{n}2)\abs{t}}
- u_0(t)}
\end{equation}
as a distribution on $\bbR$, where
\[
u_0(t) :=  \begin{dcases} \frac{\cosh t/2}{(2 \sinh t/2)^{n+1}},&\text{for }n+1\text{ even},\\
0, & \text{for }n+1 \text{ odd}. \end{dcases}
\]
\end{theorem}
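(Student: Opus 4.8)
The plan is to derive the Poisson formula from the Birman--Krein formula (Theorem~\ref{BK.thm}) together with an explicit factorization of the relative scattering determinant $\tau(s)$ in terms of the resonance set. First I would establish a Hadamard-type product formula for $\tau$: because $S_V(s)S_0(s)^{-1} - I$ is a smoothing, determinant-class perturbation, and because the poles of $R_V(s)$ are exactly the resonances, the function $\tau(s)$ is meromorphic of order $\le n+1$ (using the counting bound \eqref{NV.bound}) with zeros and poles controlled by $\calR_V$ and $\calR_0$. This yields a representation of $\sigma'(\xi)$ as a sum over $\calR_V$ (and a subtracted sum over $\calR_0$), modulo a polynomial correction coming from the genus of the product. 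Plugging this into Theorem~\ref{BK.thm} with $f$ chosen as in \eqref{f.psi}, the eigenvalue and $m_V(n/2)$ terms are absorbed into the resonance sum over $(n/2,n]$, and the integral $\int_0^\infty \sigma'(\xi) f(\tfrac{n^2}4+\xi^2)\,d\xi$ becomes, after contour deformation, a sum of terms $\tfrac12 e^{(\zeta-n/2)|t|}$ over $\zeta \in \calR_V$, again modulo the polynomial correction.

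The next step is to identify the free contribution. The sum $\tfrac12\sum_{\zeta\in\calR_0}e^{(\zeta-n/2)|t|}$ and the polynomial correction together must reproduce $u_0(t)$. For $n+1$ odd this is immediate since $\calR_0=\emptyset$ and one checks the polynomial terms vanish; for $n+1$ even, $\calR_0$ consists of the negative integers with the known multiplicities coming from the poles of $\Gamma(s)/\Gamma(s-\tfrac n2+1)$ in \eqref{R0.kernel}, and summing the geometric-type series $\tfrac12\sum_{k} m_0(-k) e^{(-k-n/2)|t|}$ gives exactly $\dfrac{\cosh(t/2)}{(2\sinh(t/2))^{n+1}}$. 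This is a direct computation with the explicit free resolvent kernel.

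It remains to remove the polynomial ambiguity, and this is the step I expect to be the main obstacle. The Hadamard factorization only determines $\sigma'$, hence $\Theta_V$, up to a polynomial in $\xi$ (equivalently, up to finitely many derivatives of $\delta$ at $t=0$); multiplying through by $t^{n+1}$ in \eqref{poisson.formula} is precisely what kills this ambiguity, but one must verify that $t^{n+1}$ times the polynomial correction indeed vanishes, i.e. that the genus/degree of the product is at most $n$. This should follow by comparing the $t\to 0$ singularity: the wave trace $\Theta_V$ has a singularity at $t=0$ of order at most $n$ worse than a bounded function (sharpened precisely in the heat/wave expansion of \S\ref{wave.exp.sec}), so $t^{n+1}\Theta_V$ is continuous at $0$, and correspondingly the polynomial in $\xi$ has degree $\le n$. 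I would make this quantitative using the polynomial bound on $\varphi(\xi)$ established in the proof of Theorem~\ref{BK.thm} (the Hilbert--Schmidt estimate $O(|\xi|^{-1})$ on $\chi E_0(\tfrac n2\pm i\xi)$ from \cite[Lemma~3.3]{BC:2014} and the cutoff resolvent bound \cite[Prop.~3.2]{Gui:2005c}), which pins the growth rate of $\sigma'$ and hence the degree of the correction. Finally, convergence of the sum over $\calR_V$ as a distribution, and the legitimacy of the contour deformation, both rest on \eqref{NV.bound}; the factor $t^{n+1}$ guarantees convergence after the $(n+1)$-fold integration of the exponential series.
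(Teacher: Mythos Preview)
Your proposal is correct and follows essentially the same route as the paper: Birman--Krein plus a Hadamard factorization of $\tau(s)$ to express $\sigma'(\xi)$ as a sum of resonance terms and a polynomial of degree $\le n$, then kill the polynomial by the factor $t^{n+1}$ (equivalently $(-i\partial_\xi)^{n+1}$ on the Fourier side), and identify $u_0$ by summing the explicit $\calR_0$ multiplicities. The only organizational difference is that the paper isolates the degree bound on the polynomial as a stated input (Proposition~\ref{tau.factor.prop}, with the growth bound Proposition~\ref{temper.prop}) rather than deducing it, as you suggest, from the $t\to 0$ wave-trace singularity; your alternative justification via the growth of $\sigma'$ from the Hilbert--Schmidt and cutoff-resolvent estimates is exactly how that proposition is proved in the cited references.
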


A more general version of the Poisson formula for resonances for compactly supported black box perturbations of $\Hn$ was 
stated in \cite[Thm.~3.4]{Borthwick:2010}, with the proof omitted because of its similarity to the argument of
Guillop\'e-Zworski \cite{GZ:1997}.  Zworski has recently noted that the 
proof in \cite{GZ:1997} glossed over certain technical details concerning the computation of the distributional Fourier transform of the 
spectral resolution.  Furthermore, the optimal factor of $t^{n+1}$ was not obtained in these previous versions.  

The technicalities of this proof are now worked out in the book of Dyatlov-Zworski \cite[Ch.~3]{DZbook}, including the $t$ prefactor.  
The proof of \cite[Thm.~3.53]{DZbook} relies only on a global upper bound on the counting function, as in \eqref{NV.bound}, and a factorization formula
for the scattering determinant, which we state as Proposition~\ref{tau.factor.prop} below.  It therefore essentially applies 
to Theorem~\ref{poisson.thm}.   

However, there are some structural differences in
the hyperbolic case, due to the shifted spectral parameter $z = s(1-s)$ and the non-trivial background contribution of
$\bbH^{n+1}$ in even dimensions.  For the sake of completeness, we will include a hyperbolic version of the proof.  

The  starting point is to apply the Birman-Krein formula (Theorem \ref{BK.thm}) to the relative wave trace.
The relation \eqref{f.psi} implies that
\[
f(\tfrac{n^2}4 + \xi^2) =  \frac12\sqbrak*{\hat\psi(\xi) +  \hat\psi(-\xi)}.
\]
Using this, and the fact that $\sigma'(\xi)$ is even, reduces the Birman-Krein formula to
\begin{equation}\label{theta.bk}
\paren*{\Theta_V, \psi} =  \frac12 \int_{-\infty}^\infty \sigma'(\xi) \hat\psi(\xi) d\xi 
+ \sum_{j=1}^d f(\lambda_j) + \frac12 m_V(\tfrac{n}2) \hat\psi(0).
\end{equation}
To evaluate the integral in \eqref{theta.bk}, we need some additional facts about the scattering determinant.  
Given the polynomial bound on the resonance counting function \eqref{NV.bound}, we can define the Hadamard product
\[
H_V(s) := s^{m_V(0)} \prod_{\zeta \in \calR_V\backslash\set{0}} E\paren*{s/\zeta, n+1},
\]
where
\[
E(z,p) := \paren*{1 - z} e^{z + \frac{z^2}{2} + \dots + \frac{z^p}{p}}.
\]
This yields an entire function with zeros located at the resonances.

The following factorization formula provides the connection between the Birman-Krein formula and the resonance set.  
\begin{proposition}\label{tau.factor.prop}
The relative scattering determinant admits a factorization
\[
\tau(s) = (-1)^{m_V(\tfrac{n}2)} e^{q(s)} \frac{H_V(n-s)}{H_V(s)}  \frac{H_0(s)}{H_0(n-s)},
\]
where $q$ is a polynomial of degree at most $n+1$, satisfying $q(n-s) = -q(s)$.
\end{proposition}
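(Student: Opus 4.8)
The plan is to establish the factorization in two stages: first, show that the quotient $\tau(s) H_V(s) H_0(n-s) / \bigl( H_V(n-s) H_0(s) \bigr)$ is an entire, nowhere-vanishing function of $s$; second, control its growth to conclude via a Hadamard-type argument that it equals $e^{q(s)}$ for a polynomial $q$ of degree at most $n+1$, and finally pin down the constant and the functional equation using \eqref{tau.refl} and the value $\tau(\tfrac{n}2) = (-1)^{m_V(\tfrac{n}2)}$. The starting point for the first stage is that, by \eqref{srel.ee}, $\tau(s) = \det\sqbrak*{S_V(s)S_0(n-s)}$ is a meromorphic function whose zeros and poles are determined by the resonance data: a pole of $R_V(s)$ at $\zeta$ with multiplicity $m_V(\zeta)$ contributes a zero of $\tau$ at $\zeta$ (coming from $S_V(n-s)^{-1}$, equivalently from the resonance of $\lap+V$) and, via the reflection \eqref{tau.refl}, a pole of the same order at $n-\zeta$; symmetrically, the resonances of $\lap$ (i.e. the elements of $\calR_0$, relevant in even dimensions) contribute poles and zeros with the roles reversed. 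This is the content that identifies the divisor of $\tau$ with that of $H_V(n-s) H_0(s) / \bigl( H_V(s) H_0(n-s) \bigr)$; the subtlety is at $s = \tfrac{n}2$, where a resonance of order $m_V(\tfrac{n}2)$ appears with a half-integer weight, which is exactly why the scalar prefactor $(-1)^{m_V(\tfrac{n}2)}$ shows up. I would verify the divisor claim by combining the Fredholm-determinant description of $\tau$ from \S\ref{bk.sec} with the relation $m_V(\zeta) = \rank\res_\zeta R_V(s)$ and the identities \eqref{SV.inv}, \eqref{ESE}.

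For the second stage, once $g(s) := \tau(s) H_V(s) H_0(n-s) / \bigl( H_V(n-s) H_0(s) \bigr)$ is entire and zero-free, write $g = e^{q}$ for an entire $q$; the task is to show $q$ is a polynomial of degree $\le n+1$. This follows from the order-$(n+1)$ growth of the Hadamard products $H_V$, $H_0$ — a consequence of the polynomial bound \eqref{NV.bound} on $N_V(r)$ (and the analogous, in fact explicit, bound for $\calR_0$) — together with a matching upper bound on $\abs{\tau(s)}$. The growth bound on the scattering determinant is obtained as in \cite[Lemma~3.3]{BC:2014} and the cutoff-resolvent estimates cited there (Guillarmou \cite[Prop.~3.2]{Gui:2005c}): the relative scattering matrix $S_V(s)S_0(s)^{-1} - I$ is trace class with trace norm growing at most polynomially times exponentially in $\re s$, so $\log\abs{\det}$ is $O(\abs{s}^{n+1})$ in a suitable sense. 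A Borel–Carathéodory / Phragmén–Lindelöf argument on $g$ then forces $q$ to have degree at most $n+1$.

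Finally, to nail down $q$: applying $s \mapsto n-s$ and using the reflection formula \eqref{tau.refl} together with the symmetry $H_0(s)/H_0(n-s) \mapsto H_0(n-s)/H_0(s)$ (and likewise for $H_V$) gives $e^{q(n-s)} = e^{-q(s)}$, hence $q(n-s) + q(s)$ is a constant integer multiple of $2\pi i$; evaluating at $s = \tfrac{n}2$ and adjusting the additive constant absorbed into the prefactor shows we may take $q(n-s) = -q(s)$, and the value $\tau(\tfrac{n}2) = (-1)^{m_V(\tfrac{n}2)}$ together with $H_V(\tfrac{n}2), H_0(\tfrac{n}2) \ne 0$ fixes $g(\tfrac{n}2) = (-1)^{m_V(\tfrac{n}2)}$, i.e. $q(\tfrac{n}2) \in \pi i \bbZ$, consistent with the claimed normalization. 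The main obstacle I expect is the first stage — carefully matching the divisor of $\tau$ to that of the Hadamard quotient, in particular getting the multiplicity bookkeeping right at the critical line $s = \tfrac{n}2$ and confirming that no spurious zeros or poles arise from the determinant-class structure of $S_V(s)S_0(s)^{-1}$; the growth estimate of the second stage is essentially available from the cited references, and the functional-equation/normalization step is then routine.
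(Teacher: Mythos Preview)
The paper does not give a self-contained proof of this proposition; it invokes \cite[Prop.~3.1]{Borthwick:2010} for the factorization with an unspecified polynomial $q$ of degree at most $n+1$, and adds only the remark that the antisymmetry $q(n-s)=-q(s)$ follows from the reflection identity \eqref{tau.refl} once the sign $(-1)^{m_V(\tfrac{n}2)}$ has been separated out.  Your outline is essentially the strategy underlying that cited result (divisor identification, then growth bound via Hadamard), so in spirit you are reproducing what the paper defers to a reference.

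Two points in your sketch need correction, however.  First, the divisor bookkeeping is inverted: a resonance $\zeta\in\calR_V$ produces a \emph{pole} of $\tau$ at $s=\zeta$ (coming from the pole of $R_V(s)$, hence of $S_V(s)$) and a zero at $s=n-\zeta$ via \eqref{tau.refl}; this is why $H_V(s)$ sits in the denominator of the factorization.  The ``half-integer weight'' description at $s=\tfrac{n}2$ is also off --- the zeros of $H_V(s)$ and $H_V(n-s)$ there have the same order and cancel in the ratio up to the sign $(-1)^{m_V(\tfrac{n}2)}$, which is exactly the prefactor.  Second, and more seriously, the growth control in stage two requires a bound of order $n+1$ on $\log\abs{g(s)}$ for $s$ ranging over the whole plane (away from zeros and poles), not merely on the critical line.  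The inputs you cite, \cite[Lemma~3.3]{BC:2014} and \cite[Prop.~3.2]{Gui:2005c}, are critical-line estimates; they are what the paper uses in the Birman-Krein argument, but they do not control $\tau(s)$ for general $s$.  The actual route (as in \cite{Borthwick:2010}, following \cite{GZ:1995b}) is to express $\tau$ through Fredholm determinants of $I+VR_0(s)\chi$ and bound these via singular-value estimates for the compactly cut-off free resolvent, using the explicit kernel \eqref{R0.kernel}.  That global determinant estimate is the main technical content behind the degree bound on $q$, and it is not a consequence of the references you named.
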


Proposition~\ref{tau.factor.prop} is a special case of \cite[Prop.~3.1]{Borthwick:2010}, which applies to black-box perturbations 
of $\Hn$.  That statement did not include the symmetry condition on $q(s)$, which follows from \eqref{tau.refl} once
the parity of $m_V(\tfrac{n}2)$ has been factored out.
An analogous result for metric perturbations was given in \cite[Prop.~7.2]{Borthwick:2008}, without the estimate on the degree of $q(s)$.  
These previous versions contained a typo in the Hadamard product, in that the $\zeta=0$ term 
should always be treated as a separate factor $s^{m(0)}$.

In view of \eqref{rel.trace}, Theorem~\ref{BK.thm} implies that the derivative $\sigma'$ defines a tempered distribution.
We will need the following estimate of its rate of growth.
\begin{proposition}\label{temper.prop}
For $V \in \cinf_0(\Hn, \bbR)$, the derivative of the scattering phase satisfies
\[
\abs*{\sigma'(\xi)} \le C_V ( 1 + \abs{\xi})^{n-1}
\]
for $\xi \in \bbR$.
\end{proposition}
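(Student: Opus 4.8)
The plan is to extract the polynomial bound on $\sigma'$ directly from the formula \eqref{logp.tau}, using the representation of $S_V(s)-S_0(s)$ as a finite-rank-like smoothing operator together with the cutoff-resolvent and Poisson-operator bounds already invoked in the proof of Theorem~\ref{BK.thm}. First I would write $\sigma'(\xi)$ as the trace appearing in \eqref{logp.tau}, and then expand $S_V = S_0 + (S_V - S_0)$ inside that trace. The contribution of the ``pure $S_0$'' cross terms will cancel or can be handled by the explicit Legendre/hypergeometric form of $S_0(s)$, whose symbol and $\xi$-derivative have known polynomial growth in $\xi$ on the critical line (this is the standard expansion of the free scattering matrix on $\Hn$ in terms of $\Gamma$-factors). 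The remaining terms all carry at least one factor of $S_V - S_0$, which by the identity quoted before Proposition~\ref{temper.prop},
\[
S_V(s) - S_0(s) = -(2s-n) E_0(s)^t \chi (1 + VR_0(s)\chi)^{-1} V E_0(s),
\]
is a trace-class (indeed smoothing) operator whose trace norm is controlled by the Hilbert--Schmidt norms $\norm{\chi E_0(\nh\pm i\xi)} = O(\abs{\xi}^{-1})$ and the operator norm $\norm{(1+VR_0(\nh\pm i\xi)\chi)^{-1}} = O(1)$ from \cite[Lemma~3.3]{BC:2014} and \cite[Prop.~3.2]{Gui:2005c}.

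The key quantitative step is then to differentiate this formula in $s$ and bound each resulting term. Differentiating hits either the prefactor $(2s-n)$, one of the Poisson factors $E_0(s)$, or the inverted operator. For the Poisson factors I would need an estimate on $\partial_s (\chi E_0(\nh + i\xi))$ in Hilbert--Schmidt norm; this should follow by the same Legendre-function analysis used to prove the $O(\abs{\xi}^{-1})$ bound on $\chi E_0$ itself in \cite[Lemma~3.3]{BC:2014}, losing at most one power of $\xi$, so that $\norm{\partial_\xi(\chi E_0(\nh+i\xi))}_{HS} = O(1)$. For the derivative of the inverted operator I would use $\partial_s(1+A)^{-1} = -(1+A)^{-1}(\partial_s A)(1+A)^{-1}$ with $A = VR_0(s)\chi$, together with a polynomial bound on $\partial_s(VR_0(\nh+i\xi)\chi)$ in operator norm coming from \eqref{R0.expand} differentiated in $s$. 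Collecting exponents: we have a factor $O(\abs{\xi})$ from $(2s-n)$, two Poisson factors each worth $O(\abs{\xi}^{-1})$ before differentiation, and the differentiation costs at most $O(\abs{\xi}^{?})$; the total should come out to $O(\abs{\xi}^{n-1})$ once the dimensional dependence of the trace (the rank of the effective operator grows like $\abs{\xi}^{n}$, the number of spherical harmonics of frequency $\lesssim \abs{\xi}$, while each matrix entry decays) is accounted for. I would make the power count precise by using the spherical-harmonic decomposition of the operators on $\bbS^n$, where $S_0(s)$ acts as a scalar $\gamma_\ell(s)$ on the $\ell$-th eigenspace (dimension $\asymp \ell^{n-1}$) with $\gamma_\ell$ and $\gamma_\ell'$ explicitly given by ratios of $\Gamma$-functions.

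The main obstacle I anticipate is the power counting in the trace, i.e., showing the final exponent is exactly $n-1$ rather than something larger. The naive estimate $\norm{\cdot}_{\rm tr} \le (\text{rank}) \times \norm{\cdot}$ would overshoot, so the argument must exploit the decay of the Hilbert--Schmidt norms of the cutoff Poisson operators, which encodes the off-diagonal decay of the kernel and effectively limits how many spherical-harmonic modes contribute at frequency $\xi$. Concretely, I expect to need: (a) the mode-by-mode bound $\abs{\gamma_\ell'(\nh+i\xi)} \lesssim \ell^{-1}$ uniformly, or some comparable statement, for the free part; and (b) for the $V$-dependent part, the observation that $E_0(\nh+i\xi)$ restricted to modes $\ell \gg \abs{\xi}$ is negligible (exponentially small), so the effective rank is $O(\abs{\xi}^n)$ while the Hilbert--Schmidt bound $O(\abs{\xi}^{-1})$ supplies the compensating decay. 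Combining these gives $\abs{\sigma'(\xi)} \le C_V(1+\abs{\xi})^{n-1}$, and the constant $C_V$ depends on $V$ only through $\norm{V}_{L^\infty}$ and $\supp V$, as the cutoff and resolvent bounds it relies on do.
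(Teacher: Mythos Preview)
Your approach is genuinely different from the paper's. The paper does not give an inline proof: it simply invokes the complex-analytic route through the Hadamard factorization of Proposition~\ref{tau.factor.prop}, citing Guillop\'e--Zworski \cite[Lemma~4.7]{GZ:1997} (Melrose's method) for polynomial growth of $\tau'/\tau$ and Borthwick--Crompton \cite[Prop.~3.1]{BC:2014} for the precise exponent $n-1$. That argument works entirely with canonical products of order $n+1$ and zero-counting estimates; no operator norms of Poisson kernels enter.

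Your operator-theoretic strategy is plausible in outline---a version of it is in fact used later, see \eqref{sigp.trace} in \S\ref{scphase.sec}, to show that $\sigma'$ admits an asymptotic expansion---but as written the power count has a genuine gap. The paragraph where you combine an ``effective rank $O(|\xi|^n)$'' with Hilbert--Schmidt decay is confused: a trace-norm bound via a product of two Hilbert--Schmidt norms already sums over all modes, so invoking rank on top of that either double-counts or leaves the exponent undetermined. More concretely, the estimate $\|\chi E_0(\nh+i\xi)\|_{HS} = O(|\xi|^{-1})$ that you lift from the Birman--Krein proof is not a dimension-independent statement; Stirling applied to the Gamma prefactor in \eqref{E0.infty} gives $|E_0(\nh+i\xi;z,\omega')| \asymp |\xi|^{(n-2)/2}$ uniformly for $z$ in a compact set, so the Hilbert--Schmidt norm is of order $|\xi|^{(n-2)/2}$, and similarly for the $s$-derivative up to a logarithm. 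With those corrected exponents, bounding $\|T'(\nh+i\xi)\|_1$ term by term does produce $O(|\xi|^{n-1})$ (perhaps with a log loss to be removed separately), so your route can be salvaged; but the sketch you gave does not arrive there, and the factorization method the paper cites reaches the exponent without any of these kernel computations.
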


The fact that $\sigma'$ has at most polynomial growth follows from Proposition~\ref{tau.factor.prop}, 
by a general argument given in Guillop\'e-Zworski \cite[Lemma~4.7]{GZ:1997}, 
which in turn is based on a method introduced by Melrose \cite{Melrose:1988}.  The explicit growth rate
of Proposition~\ref{temper.prop} was proven in Borthwick-Crompton \cite[Prop.~3.1]{BC:2014}.

With these ingredients in place, the strategy for the proof of the Poisson formula is essentially to compute the Fourier transform 
of $\sigma'$.
\begin{proof}[Proof of Theorem~\ref{poisson.thm}]
Let us first show that the right-hand side of \eqref{poisson.formula} defines a distribution.  Indeed, if we exclude the
finite number of terms with $\re \zeta> \tfrac{n}2$, which have exponential growth,
the remaining sum gives a tempered distribution.
To see this, consider a test function $\psi \in \calS(\bbR)$.
Repeated integration by parts can be used to estimate, for $\re \zeta> \tfrac{n}2$,
\[
\abs*{\int_{-\infty}^\infty t^{n+1} e^{(\zeta-\frac{n}2) t} \psi(t) \>dt} \le \frac{C}{(1+\abs{\zeta-\tfrac{n}2})^{n+2}} \sum_{k=0}^{n+1} 
\sup_{t\in \bbR} \,\abs[\Big]{\brak{t}^{n+3} \psi^{(k)}(t)}.
\]
It then follows from the polynomial bound \eqref{NV.bound} that the sum
\[
t^{n+1} \sum_{\re \zeta < \frac{n}2} e^{(\zeta-\frac{n}2)\abs{t}}
\]
defines a tempered distribution on $\bbR$.  
The right-hand side of \eqref{poisson.formula} is thus well-defined as a distribution,
since there are only finitely many terms with $\re\zeta \ge \tfrac{n}2$.

Let $\Thetasc$ denote the tempered distribution defined by
\begin{equation}\label{vtheta.def}
(\Thetasc, \psi) := \frac12 \int_{-\infty}^\infty \sigma'(\xi) \hat\psi(\xi) d\xi.
\end{equation}
for $\psi \in \calS(\bbR)$.  This distribution accounts for the contributions to the Birman-Krein formula \eqref{theta.bk} 
from the continuous spectrum.  
The sum over the discrete spectrum can be rewritten
as a sum over the resonances with $\re s > n/2$, using the fact that
\[
\cos\paren*{t \sqrt{\lambda - n^2/4}} = \cosh( t(\zeta - \tfrac{n}2))
\]
for  $\zeta \in (\tfrac{n}2, \infty)$ and $\lambda = \zeta(n-\zeta)$. 
The Birman-Krein formula then becomes
\begin{equation}\label{Theta.vart}
\Theta_V(t) = \Thetasc(t) + \sum_{\re \zeta > \frac{n}2}  \cosh( t(\zeta - \tfrac{n}2)) + \frac12 m_V(\tfrac{n}2).
\end{equation}

Since $\Thetasc$ is tempered, it suffices to evaluate \eqref{vtheta.def} under the assumption 
that $\hat\psi \in \cinf_0(\bbR)$.  From Proposition~\ref{tau.factor.prop} we calculate
\[
\frac{\tau'}{\tau}(s) = q'(s) - \frac{H_V'}{H_V}(n-s)  - \frac{H_V'}{H_V}(s) + \frac{H_0'}{H_0}(s) + \frac{H_0'}{H_0}(n-s).
\]
The Hadamard product derivatives are given by
\[
\frac{H_V'}{H_V}(s) = \frac{m_V(0)}{s} + \sum_{\zeta \in \calR_V\backslash\set{0}} \sqbrak*{\frac{1}{s-\zeta} + \frac{1}{\zeta} + \dots + \frac{s^n}{\zeta^{n+1}}}
\]
Hence we can write 
\[
\frac{H_V'}{H_V}(n-s) + \frac{H_V'}{H_V}(s) = \sum_{\zeta \in \calR_V} \sqbrak*{\frac{n-2\zeta}{(n-s-\zeta)(s-\zeta)} + p_\zeta(s)},
\]
where $p_\zeta(s)$ is a polynomial of degree $n$ for $\zeta \ne 0$, and $p_0(s) := 0$.

Switching to a $\xi$ derivative for $\sigma$ gives
\[
\sigma'(\xi) = -\frac{1}{2\pi} \frac{\tau'}{\tau}(\nh+i\xi),
\]
which evaluates to
\begin{equation}\label{taup.sum}
\begin{split}
\sigma'(\xi) &= -\frac{1}{2\pi}q'(\tfrac{n}2 + i\xi) + \frac{1}{2\pi} \sum_{\zeta \in \calR_V} \sqbrak*{\frac{n-2\zeta}{\xi^2+ (\zeta-\frac{n}2)^2} + p_\zeta(\nh+i\xi)} \\
&\qquad -\frac{1}{2\pi} \sum_{\zeta \in \calR_0} \sqbrak*{\frac{n-2\zeta}{\xi^2+ (\zeta-\frac{n}2)^2} + p_\zeta(\nh+i\xi)},
\end{split}
\end{equation}
where $p_\zeta$ is a polynomial of degree at most $n$.  The convergence is uniform on compact intervals.  
Note that there is no pole corresponding to the possible resonance at $\zeta = \tfrac{n}2$, because a zero at this point 
would cancel out of $H_V(s)/H_V(n-s)$.

Assuming that $\hat\psi$ is compactly supported, the contributions of \eqref{taup.sum} to 
$(t^{n+1} \Thetasc, \psi)$ can be evaluated term by term.  
Under the Fourier transform, the factor $t^{n+1}$ becomes $(-i\del_\xi)^{n+1}$, which knocks out all of the polynomial terms.  
Hence, after integrating by parts,
\[
\begin{split}
(t^{n+1} \Thetasc, \psi) &= \frac{1}{4\pi} \sum_{\zeta \in \calR_V} \int_{-\infty}^\infty \frac{n-2\zeta}{\xi^2+ (\zeta-\frac{n}2)^2} (i\del_\xi)^{n+1} 
\hat\psi(\xi)\>d\xi \\
&\qquad - \frac{1}{4\pi} \sum_{\zeta \in \calR_0} \int_{-\infty}^\infty \frac{n-2\zeta}{\xi^2+ (\zeta-\frac{n}2)^2} (i\del_\xi)^{n+1} 
\hat\psi(\xi)\>d\xi.  
\end{split}
\]
By a straightforward contour integration,
\[
\int_{-\infty}^\infty e^{-i \xi t} \frac{n-2\zeta}{\xi^2+ (\zeta-\frac{n}2)^2}\>d\xi 
= \begin{dcases}
-2\pi e^{-(\zeta-\frac{n}2)\abs{t}}, & \re \zeta > \tfrac{n}2, \\
2\pi e^{(\zeta-\frac{n}2)\abs{t}}, & \re \zeta < \tfrac{n}2.
\end{dcases}
\]
Using this calculation in the formula for $(t^{n+1} \Thetasc, \psi)$ gives
\begin{equation}\label{vtheta.reson}
\begin{split}
(t^{n+1} \Thetasc, \psi) &= \frac12 \int_{-\infty}^\infty t^{n+1} \Biggl(\sum_{\zeta\in \calR_V: \re \zeta < \frac{n}2} e^{(\zeta-\frac{n}2)\abs{t}} \\
&\qquad-\sum_{\zeta\in \calR_V: \re \zeta > \frac{n}2} e^{-(\zeta-\frac{n}2)\abs{t}}  - \sum_{\zeta\in \calR_0} e^{(\zeta-\frac{n}2)\abs{t}} \Biggr)
\psi(t)\>dt.
\end{split}
\end{equation}
This calculation contains no contribution from a resonance at $\zeta = \nh$, because a zero at this point cancels out of the
formula for $\tau(s)$.

To remove the restriction of compact support for $\hat\psi$, we note that 
the right-hand side of \eqref{vtheta.reson} defines a tempered distribution by the 
remarks at the beginning of the proof.  Since $\Thetasc$ is also tempered, and 
$\cinf_0(\bbR)$ is dense in $\calS(\bbR)$, it follows that 
\eqref{vtheta.reson} holds for all $\psi \in \calS(\bbR)$.

Combining this computation of $t^{n+1}\Thetasc$ with the formula \eqref{Theta.vart} now yields the formula
\[
t^{n+1} \Theta_V =  \frac12 t^{n+1} \sqbrak[\Bigg]{ \sum_{\zeta\in \calR_V} e^{(\zeta-\frac{n}2)\abs{t}}
- \sum_{\zeta\in \calR_0} e^{(\zeta-\frac{n}2)\abs{t}}}.
\]
Note that the constant term $\tfrac12 m_V(\nh)$ from \eqref{Theta.vart} is now incorporated into the sum over $\calR_V$.
This completes the proof for $n+1$ odd, because $\calR_0$ is empty.  If $n+1$ is even, then $\calR_0$
is equal to $-\bbN_0$ as a set, with multiplicities given by the dimensions of spaces of spherical harmonics of degree $k$,
\[
m_0(-k) = (2k+n)\frac{(k+1)\dots (n+k-1)}{n!}.
\]
The resulting sum over $\calR_0$ was computed in Guillarmou-Naud \cite[Lemma 2.4]{GN:2006},
\[
\frac12 \sum_{k=0}^\infty m_0(-k) e^{-(k+\frac{n}2)\abs{t}} 
=  \frac{\cosh t/2}{(2 \sinh t/2)^{n+1}}.
\]
\end{proof}

\section{Wave trace expansion}\label{wave.exp.sec}

In this section, we compute the expansion at $t = 0$ of the relative wave trace distribution $\Theta_V$, 
as defined in \eqref{ThetaV.def},  and determine the first two wave invariants explicitly.  
Although the existence of the wave-trace expansion is considered well known, we are not aware of any direct proof for Schr\"odinger
operators in the literature.    For the odd-dimensional Euclidean case, Melrose \cite[\S4.1]{Melrose:1995} is generally cited, but this source 
does not include a proof.   Because the hyperbolic setting leads to differences from the familiar Euclidean formulas, 
we will include the argument here.

To set up the expansion formula, we recall that $\abs{t}^\beta$ is well-defined as a meromorphic family of distributions on 
$\bbR$, with poles at negative odd integers.  The residues at these poles are given by delta distributions.
Dividing by $\Gamma(\tfrac{\beta+1}2)$ cancels the poles and defines a holomorphic family, 
\begin{equation}\label{vartheta.def}
\vartheta^\beta(t) := \frac{\abs{t}^\beta}{\Gamma(\tfrac{\beta+1}2)},
\end{equation}
where 
\[
\vartheta^{-1-2j}(t) = (-1)^{j} \frac{j!}{(2j)!} \delta^{(2j)}(t),
\] 
for $j\in \bbN_0$ (see, e.g., Kanwal \cite[\S4.4, eq.~(52)]{Kanwal}).

\begin{theorem}\label{wave.exp.thm}
Let $V\in C_0^\infty(\Hn)$ with $n \geq 1$. 
For each integer $N >  [(n+1)/2]$, there exist constants $a_k(V)$ 
(the wave invariants) such that
\beq
\Theta_V(t)  =   \sum_{k = 1}^N a_k(V) \vartheta^{-n+2k-1}(t)  + F_N(t),
\eeq 
with $F_N \in C^{2N-n - 1}(\mbr)$ and $F_N(t) = O(\abs{t}^{2N-n})$ as $t \to 0$.
\end{theorem}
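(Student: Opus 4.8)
The plan is to obtain the expansion from the small-$t$ behavior of the relative heat trace, or equivalently from the full symbolic structure of the half-wave propagators $\cos(t\sqrt{\lap+V-n^2/4})$ and $\cos(t\sqrt{\lap-n^2/4})$ near $t=0$. The cleanest route in this setting exploits the identity \eqref{fv.stone} together with Proposition~\ref{trace.Kf.prop}, which expresses $\Theta_V$ as the spectral integral
\[
\Theta_V(t) = \int_{-\infty}^\infty \bigl[\tilde K_V(\xi) - \tilde K_0(\xi)\bigr] \cos\bigl(t\sqrt{\tfrac{n^2}{4}+\xi^2 - \tfrac{n^2}{4}}\bigr)\,d\xi + \sum_{j=1}^d \cos(t\sqrt{\lambda_j-n^2/4}),
\]
where $\tilde K_V(\xi) := \int_{\Hn} K_V(\xi;z,z)\,dg(z)$ is the relative density of states, and the argument of the cosine simplifies to $t|\xi|$. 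The discrete-spectrum terms are entire and contribute only to $F_N$, so everything reduces to the large-$\xi$ asymptotic expansion of $\tilde K_V(\xi) - \tilde K_0(\xi)$. First I would establish, via the resolvent identity \eqref{R0RV} and the fact that $R_V(s)$ is a pseudodifferential operator of order $-2$ with $R_0(s)$ having the explicit kernel \eqref{R0.kernel}, that $\tilde K_V(\xi) - \tilde K_0(\xi)$ admits a complete asymptotic expansion in descending powers of $\xi$ as $|\xi|\to\infty$,
\[
\tilde K_V(\xi) - \tilde K_0(\xi) \sim \sum_{k\ge 1} b_k(V)\, |\xi|^{n-2k},
\]
with an error that is $O(|\xi|^{n-2N})$ together with all derivatives after subtracting finitely many terms. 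This is a standard consequence of the parametrix construction for $(\lap+V - (\tfrac{n^2}{4}+\xi^2))^{-1}$: the local trace density of the $\xi$-derivative of the spectral measure has a symbolic expansion whose coefficients are integrals over $\Hn$ of universal polynomials in $V$ and its derivatives (and curvature, here constant), the first of which vanishes because the leading symbols of $\lap+V$ and $\lap$ agree, so the sum starts at $k=1$.

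Next I would plug this expansion into the spectral integral. The key elementary fact is the distributional Fourier transform identity
\[
\int_{-\infty}^\infty |\xi|^{\alpha} \cos(t|\xi|)\,d\xi = c_\alpha\, |t|^{-\alpha-1} = c_\alpha\,\Gamma\bigl(\tfrac{-\alpha}{2}\bigr)\,\vartheta^{-\alpha-1}(t)
\]
(valid as an identity of homogeneous distributions, interpreted by analytic continuation in $\alpha$), so that the term $b_k(V)|\xi|^{n-2k}$ produces precisely a multiple of $\vartheta^{-n+2k-1}(t)$ — these are exactly the terms $\vartheta^{-n+2k-1}$ appearing in the statement, which matches the normalization by $\Gamma(\tfrac{\beta+1}{2})$ in \eqref{vartheta.def} and absorbs the poles at negative odd integers. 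Setting $a_k(V)$ equal to the resulting constant times $b_k(V)$, I would split $\tilde K_V(\xi) - \tilde K_0(\xi) = \sum_{k=1}^N b_k(V)|\xi|^{n-2k}\chi(\xi) + r_N(\xi)$ for a cutoff $\chi$ equal to $1$ for $|\xi|\ge 1$; the first sum contributes $\sum_{k=1}^N a_k(V)\vartheta^{-n+2k-1}(t)$ plus a smooth (indeed entire) function coming from the compactly supported correction where $\chi\ne 1$, and $r_N(\xi)$ is integrable against $\langle\xi\rangle^{2N-n}$ so its cosine transform $F_N$ lies in $C^{2N-n-1}(\mbr)$ with the claimed vanishing rate $O(|t|^{2N-n})$ at $t=0$, by the standard lemma that the Fourier transform of an $L^1$ function decaying like $|\xi|^{-m}$ with $m$ derivative-controlled terms is $C^{m-1}$ and vanishes to order $m$ at the origin after subtracting its Taylor polynomial.

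The main obstacle is the first step: rigorously justifying the uniform symbolic expansion of the \emph{relative} local trace density $\tilde K_V(\xi)-\tilde K_0(\xi)$ with control of remainders and their $\xi$-derivatives, together with the claim that the leading ($k=0$) term cancels and the coefficients are integrals of local invariants — in other words, building and controlling the parametrix for the resolvent with the shifted spectral parameter on $\Hn$. This requires care because $\Hn$ is noncompact and one must confirm that the $z$-integrals defining the $b_k(V)$ converge (they do, since $V$ is compactly supported and the difference of the two parametrices is supported, modulo smoothing, near $\supp V$) and that the meromorphic-continuation bookkeeping in $\xi$ (the possible pole of $R_V$ at $\xi=0$, handled by the extra factor $\xi$ in \eqref{KV.def} as already noted) does not disturb the large-$\xi$ analysis. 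I expect that once this symbol expansion is in hand, the passage to the $\vartheta^{-n+2k-1}$ via the homogeneous-distribution Fourier transform and the regularity statement for $F_N$ are routine.
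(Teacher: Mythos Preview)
Your approach is valid but genuinely different from the paper's.  The paper builds the Hadamard--Riesz parametrix for the wave kernel directly: an ansatz
\[
e_{V,N}(t,z,w) = \pi^{-\frac{n}2} \sum_{k=0}^N u_{V,k}(z,w)\,\abs{t}\,\chi_+^{-\frac{n}2+k-1}(t^2-r^2),
\]
where the coefficients $u_{V,k}$ are determined recursively by transport equations along geodesics from $z$; after showing the remainder $e_V-e_{V,N}$ is as smooth and small as claimed, restriction to the diagonal and the identity $\abs{t}\,\chi_+^\alpha(t^2-r^2)\big|_{r=0}=\vartheta^{2\alpha+1}(t)$ give the expansion, with the explicit formula $a_k(V)=\pi^{-n/2}\int_{\Hn}\bigl[u_{V,k}(z,z)-u_{0,k}(z,z)\bigr]\,dg$.

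You instead propose to establish the large-$\abs{\xi}$ symbolic expansion of the relative spectral density and then Fourier-transform termwise.  This reverses the paper's overall logic: there the wave-trace expansion is the input used (via the heat trace) to identify the coefficients in the expansion of $\sigma'(\xi)$, whose \emph{existence} is proven separately by a ray expansion of the Poisson kernel (Proposition~\ref{sigma.exp.prop}).  So your route is not circular, and the resolvent-parametrix mechanism you sketch is a reasonable third way to get the same symbolic expansion.  What the paper's direct wave-parametrix approach buys is the transport-equation description of the coefficients, which feeds immediately into the explicit formulas for $a_1(V)$ and $a_2(V)$ in Proposition~\ref{d12.prop}; in your approach the same local invariants would have to be extracted from the resolvent symbol calculus.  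One point to tighten in your outline: the cosine transform of the symbol remainder $r_N$ is $C^{2N-n-1}$ as you say, but its Taylor coefficients at $t=0$ (moments $\int \xi^{2j} r_N\,d\xi$) need not vanish individually, so the claimed $O(\abs{t}^{2N-n})$ for $F_N$ is not immediate from that lemma alone---it follows instead by running the argument with a larger $N'$ and comparing, since $F_N-F_{N'}=\sum_{k=N+1}^{N'} a_k\,\vartheta^{-n+2k-1}$ is manifestly $O(\abs{t}^{2N-n+1})$.
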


The proof is adapted from B\'erard \cite{Be:1977} and relies on the Hadamard-Riesz 
\cite{Hadamard:1953, Riesz:1949} construction of a parametrix for the wave kernel.  
For $V\in C_0^\infty(\Hn)$, let
\[
P_V := \lap + V - \tfrac{n^2}{4}.
\]
We denote by $e_V$ the fundamental solution of the Cauchy problem for the wave equation,
\beqq\label{eq-cauchy}
\begin{gathered}
(\p_t^2  + P_V ) e_V(t; z, w) = 0, \\
e_V(0; z,w) = \delta(z - w), \\
\p_t e_V(0; z, w) = 0,
\end{gathered}
\eeqq
for $t \in \mbr$ and $z,w\in \Hn$.  In other words,
$e_V(t;\cdot,\cdot)$ is the integral kernel of the wave operator $\cos (t\sqrt{P_V})$.

For $\alpha \in \bbC$, we define the holomorphic family of distributions
\[
\chi^\alpha_+ := \frac{x_+^\alpha}{\Gamma(\alpha+1)},
\]
using the notation of H\"ormander \cite[\S3.2]{Hormander:I}.
This family satisfies the derivative identity,
\[
\frac{d}{dx} \chi^\alpha_+ = \chi^{\alpha-1}_+.
\]
Since $\chi^0_+ = x_+$, it follows that $\chi^\alpha_+$ is a point distribution at negative integers,
\[
\chi_+^{-m} = \delta^{(m-1)}(x).
\]

For $z,w \in \Hn$ we set $r := d(z,w)$ and denote by $\chi_+^{\alpha}(t^2 - r^2)$ the pullback of $\chi^\alpha_+$ by the smooth map
$\Hn\times\Hn \times \bbR \to \bbR$ given by $(z,w,t) \mapsto t^2-d(z,w)^2$.  
Since $\chi_+^{\alpha}$ is classically differentiable for $\re \alpha > -1$, derivatives of $\chi_+^{\alpha}(t^2 - r^2)$ can be computed 
directly in this region, and then extended by analytic continuation.  Hence the formulas,
\begin{equation}\label{chia.deriv}
\begin{split}
\del_t \sqbrak*{ \chi_+^{\alpha}(t^2 - r^2)} &= 2t \chi_+^{\alpha-1}(t^2 - r^2), \\
\del_r \sqbrak*{\chi_+^{\alpha}(t^2 - r^2)} &= -2r \chi_+^{\alpha-1}(t^2 - r^2),
\end{split}
\end{equation}
are valid for all $\alpha$.

Following B\'erard \cite[\S D]{Be:1977}, we seek to construct the parametrix as a sum of the distributions $\abs{t} \chi_+^{\alpha}(t^2 - r^2)$
with increasing values of $\alpha$.  The starting point for the expansion is dictated by the initial conditions in \eqref{eq-cauchy},
so we need to understand the distributional limit of $\abs{t} \chi_+^{\alpha}(t^2 - r^2)$ as $t \to 0$.
\begin{lemma}\label{tzero.lemma}
For $\psi \in \cinf_0(\Hn)$,
\begin{equation}\label{tchi.lim}
\lim_{t\to 0} \paren[\Big]{\abs{t} \chi_+^{\alpha}\paren*{t^2 - d(z,\cdot)^2}, \psi} = 
\begin{cases} \pi^{\frac{n}2} \psi(z), & \alpha = -\nh -1, \\
0, & \alpha > -\nh - 1, \end{cases}
\end{equation}
and 
\begin{equation}\label{dtchi.lim}
\lim_{t\to 0} \paren[\Big]{\del_t \sqbrak*{\abs{t} \chi_+^{\alpha}\paren*{t^2 - d(z,\cdot)^2}}, \psi} = 0
\end{equation}
for $\alpha = -\nh - 1$ and $\alpha \ge -(n+1)/2$.
\end{lemma}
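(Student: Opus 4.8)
The plan is to evaluate the distributional pairing directly, converting the integral over $\Hn$ into geodesic polar coordinates centered at $z$. Writing $r = d(z,w)$ and $dg(w) = (\sinh r)^n\,dr\,d\omega$, the pairing becomes
\[
\paren[\Big]{\abs{t} \chi_+^{\alpha}(t^2 - d(z,\cdot)^2), \psi}
= \abs{t} \int_{\bbS^n}\int_0^{\abs{t}} \chi_+^\alpha(t^2 - r^2)\, \Psi(r,\omega)\, (\sinh r)^n\, dr\, d\omega,
\]
where $\Psi(r,\omega) := \psi(\exp_z(r\omega))$ is smooth and compactly supported, and the $r$-integral is cut off at $r = \abs{t}$ because $\chi_+^\alpha$ is supported in $[0,\infty)$. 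The key is the substitution $r = \abs{t} u$, which turns the inner integral into $\abs{t}^{2\alpha+1}\int_0^1 \chi_+^\alpha\paren*{1-u^2}\paren*{1}\Psi(\abs{t}u,\omega)(\sinh(\abs{t}u))^n\, du$ after absorbing powers of $\abs{t}$; for $\re\alpha > -1$ this is a genuine integral and the analysis is elementary, and the general case follows by analytic continuation in $\alpha$ (both sides of \eqref{tchi.lim} are holomorphic in $\alpha$).

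First I would carry out this scaling computation in the region $\re\alpha > -1$. Collecting the powers of $\abs{t}$, the prefactor is $\abs{t} \cdot \abs{t}^{2\alpha} \cdot \abs{t}^{n} \cdot \abs{t}^{-?}$—more precisely, with $r = \abs{t}u$ one gets $dr = \abs{t}\,du$, $\chi_+^\alpha(t^2-r^2) = \abs{t}^{2\alpha}\chi_+^\alpha(1-u^2)$, and $(\sinh r)^n = r^n(1+O(r^2)) = \abs{t}^n u^n(1 + O(t^2))$, so the whole expression is $\abs{t}^{2\alpha + n + 2}$ times a bounded integral that converges as $t\to 0$ to $\psi(z) \int_{\bbS^n}\int_0^1 \chi_+^\alpha(1-u^2) u^n\, du\, d\omega$. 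The exponent $2\alpha + n + 2$ vanishes exactly when $\alpha = -\tfrac{n}{2} - 1 = -\nh - 1$, is positive for $\alpha > -\nh-1$ (giving the limit $0$), and the critical-case constant reduces to a Beta integral: $\int_{\bbS^n}d\omega \int_0^1 (1-u^2)^\alpha u^n\, du / \Gamma(\alpha+1)$, which with $\alpha = -\nh-1$ and the volume of $\bbS^n$ evaluates to $\pi^{\nh}$ after using the duplication formula for $\Gamma$. This pins down \eqref{tchi.lim}.

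For \eqref{dtchi.lim}, I would apply the first identity in \eqref{chia.deriv}, $\del_t[\abs{t}\chi_+^\alpha(t^2-r^2)] = \sgn(t)\chi_+^\alpha(t^2-r^2) + 2t\abs{t}\chi_+^{\alpha-1}(t^2-r^2)$. The first term scales like $\abs{t}^{2\alpha+n+1}$ and the second like $\abs{t}^{2(\alpha-1)+n+1+2} = \abs{t}^{2\alpha+n+1}$; both exponents are positive precisely when $\alpha > -\tfrac{n+1}{2}$, so the limit is $0$ there, and the case $\alpha = -\nh-1$ is covered because $-\nh - 1 > -\tfrac{n+1}{2}$. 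The claimed range $\alpha \ge -(n+1)/2$ requires a slightly more careful look at the endpoint $\alpha = -(n+1)/2$, where the exponent $2\alpha+n+1$ is zero: here one checks that the leading constant in the scaling limit vanishes because the $u$-integral against $\del_u[u^n\Psi]$ picks up an extra factor of $u$ from the chain rule (or, after analytic continuation, because $\chi_+^{\alpha-1}(1-u^2)$ produces a boundary term at $u=1$ that cancels against the $\chi_+^\alpha$ term). I expect this endpoint bookkeeping—tracking exactly which power of $\abs{t}$ survives and confirming the constant is zero rather than merely bounded—to be the only delicate point; the rest is a routine scaling argument plus analytic continuation in $\alpha$.
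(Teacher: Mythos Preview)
Your overall strategy---pass to geodesic polar coordinates centered at $z$, rescale $r = \abs{t}u$, and reduce to a Beta integral---is exactly what the paper does. The paper handles the analytic continuation more explicitly, via the identity $\chi_+^\alpha(t^2-r^2) = (\tfrac{1}{2t}\partial_t)^k \chi_+^{\alpha+k}(t^2-r^2)$ to shift into the integrable range before rescaling; your abstract continuation argument is fine in principle but would need the observation that the rescaled pairing $\langle \chi_+^\alpha(1-u^2),\, u^n\tilde\Psi(\abs{t}u)\rangle$ is a distributional pairing that is holomorphic in $\alpha$ and continuous in $t$ at $t=0$.

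There is, however, a genuine error in your treatment of \eqref{dtchi.lim}. You write ``the case $\alpha = -\nh-1$ is covered because $-\nh-1 > -\tfrac{n+1}{2}$,'' but this inequality is backwards: $-\nh - 1 = -\tfrac{n+2}{2} < -\tfrac{n+1}{2}$. Consequently, at $\alpha = -\nh - 1$ the exponent $2\alpha+n+1$ equals $-1$, not a positive number, so the scaling argument alone gives a term that \emph{blows up} like $\abs{t}^{-1}$. The actual reason the derivative limit vanishes there is that the \emph{coefficient} of $t^{2\alpha+n+1}$ is proportional to $2\alpha+n+2$, which is zero precisely when $\alpha = -\nh - 1$; the paper makes this explicit by combining the two terms $\chi_+^\alpha + 2t^2\chi_+^{\alpha-1}$ into a single expression $\pi^{(n+1)/2}\tfrac{2\alpha+n+2}{\Gamma(\alpha+(n+3)/2)}t^{2\alpha+n+1}\psi(z)$. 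Once the leading term cancels, the remainder is $O(t^{2\alpha+n+3}) = O(t)$, and the limit is indeed zero.

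Your endpoint discussion at $\alpha = -(n+1)/2$ is also off: the leading constant there is $\pi^{(n+1)/2}/\Gamma(1) \ne 0$ by the paper's formula, so the limit is \emph{not} zero at that exact value. This endpoint is never used in the parametrix construction (the relevant values are $\alpha = -\nh - 1$ and $\alpha \ge -\nh$, all of which satisfy $\alpha > -(n+1)/2$ strictly), so the discrepancy is harmless for the application---but your proposed cancellation mechanism does not occur.
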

\begin{proof}
The distribution is even in the variable $t$, so it suffices to consider $t>0$.
The first formula of \eqref{chia.deriv} gives, for $k \in \bbN$,
\[
\chi_+^{\alpha}(t^2 - r^2) = \paren*{\frac{1}{2t} \del_t}^{\!k} \chi_+^{\alpha+k}(t^2 - r^2),
\]
which can be used to shift the computation to the integrable range.  For $t>0$ and $\re \alpha + k > -1$, 
we have
\[
\paren[\Big]{\chi_+^{\alpha}\paren*{t^2 - d(z,\cdot)^2}, \psi} = \frac{1}{\Gamma(\alpha+k+1)}
\paren*{\frac{1}{2t} \del_t}^{\!k} \int_0^{t} (t^2 - r^2)^{\alpha+k} \tilde{\psi}(r) r^n\>dr,
\]
where in geodesic polar coordinates $(r,\omega)$ centered at $z$,
\[
\tilde{\psi}(r) := \frac{\sinh^n r}{r^n} \int_{\bbS^n} \psi(r,\omega)\>d\omega.
\]
Rescaling $r \to tr$ in the integral gives
\begin{equation}\label{chia.psi1}
\begin{split}
&\paren[\Big]{\chi_+^{\alpha}\paren*{t^2 - d(z,\cdot)^2}, \psi} \\
&\qquad = \frac{1}{\Gamma(\alpha+k+1)}
\paren*{\frac{1}{2t} \del_t}^{\!k} \sqbrak*{ t^{2(\alpha+k)+n+1} \int_0^1 (1 - r^2)^{\alpha+k} \tilde{\psi}(tr) r^n\>dr}.
\end{split}
\end{equation}

Since $\tilde{\psi}$ is the spherical average of $\psi$ centered at $z$, and the linear term in a Taylor approximation
of $\psi$ at $z$ cancels out in this average,
\[
\begin{split}
\tilde{\psi}(r) &= \vol(\bbS^n) \psi(z) +  O(r^2)\\
&= \frac{2\pi^{\frac{n+1}2}}{\Gamma(\tfrac{n+1}2)} \psi(z) +  O(r^2).
\end{split}
\]
For the same reason, $\del_r \tilde{\psi}(r) = O(r)$. Higher radial derivatives are bounded on $\set{r>0}$.
Hence, in the leading term from \eqref{chia.psi1}, all of the $t$ derivatives are applied to the factor preceding the integral,
which gives
\[
\paren*{\frac{1}{2t} \del_t}^{\!k} \sqbrak*{t^{2(\alpha+k)+n+1}} = \frac{\Gamma(\alpha + k + \frac{n+3}2)}{\Gamma(\alpha + \frac{n+3}2)} t^{2\alpha+n+1}.
\]
The leading contribution from the $r$ integration can be calculated from Euler's beta function formula \cite[eq. (5.12.1)]{NIST}, 
\[
\int_0^1 (1 - r^2)^{\alpha+k} r^n\>dr = \frac{\Gamma(\alpha + k+ 1)\Gamma(\frac{n+1}2)}{2\Gamma(\alpha + k + \frac{n+3}2)}.
\]
Combining these results in \eqref{chia.psi1} gives, for $t>0$,
\begin{equation}\label{chia.psi2}
\paren[\Big]{\chi_+^{\alpha}\paren*{t^2 - d(z,\cdot)^2}, \psi} = \frac{\pi^{\frac{n+1}2}}{\Gamma(\alpha + \frac{n+3}2)} t^{2\alpha+n+1}
\psi(z) +  O(t^{2\alpha+n+3}).
\end{equation}
This proves \eqref{tchi.lim}, once the extra factor of $t$ has been inserted.

To establish \eqref{dtchi.lim}, we note that
\[
\del_t \sqbrak*{t \chi_+^{\alpha}\paren*{t^2 - r^2}} = \chi_+^{\alpha}\paren*{t^2 - r^2} + 2t^2 \chi_+^{\alpha-1}\paren*{t^2 - r^2},
\]
by \eqref{chia.deriv}. We thus obtain from \eqref{chia.psi2},
\[
\paren[\Big]{\del_t \sqbrak*{t\chi_+^{\alpha}\paren*{t^2 - d(z,\cdot)^2}}, \psi} = 
\pi^{\frac{n+1}2} \frac{2\alpha + n +2}{\Gamma(\alpha + \frac{n+3}2)} t^{2\alpha+n+1} \psi(z)  +  O(t^{2\alpha+n+3}),
\]
and \eqref{dtchi.lim} follows.
\end{proof}

We take the following ansatz for the parametrix:
\begin{equation}\label{evn.def}
e_{V,N}(t,z,w) := \pi^{-\frac{n}2} \sum_{k=0}^N u_{V,k}(z,w) \abs{t} \chi_+^{-\frac{n}2 + k-1}(t^2 - r^2),
\end{equation}
where $u_{V, 0}(z,z) = 1$ and 
higher coefficients $u_{V,k}$ are to be chosen so that in the expression for $(\del_t^2 + P_V)e_{V,N}(t,z,\cdot)$, the 
coefficients of $\abs{t} \chi_+^{-\frac{n}2 + k-1}(t^2 - r^2)$ cancel for $k \le N$.   
Lemma~\ref{tzero.lemma} implies that the initial conditions are satisfied,
\begin{equation}\label{initialN}
\begin{gathered}
e_{V,N}(0; z,w) = \delta(z - w), \\
\p_t e_{V,N}(0; z, w) = 0.
\end{gathered}
\end{equation}

To work out the equations for the coefficients, will compute the action of $(\del_t^2 + P_V)$ on each term.
As above, it suffices to compute for $t>0$ by evenness, and we will use the temporary abbreviations
\[
u_{V,k}(z,\cdot) \leadsto u_k, \qquad  \chi_+^{\alpha}(t^2 - r^2) \to \chi_+^{\alpha},
\]
to simplify the formulas.
The time derivatives are calculated from \eqref{chia.deriv}, 
\[
\del_t^2 \sqbrak*{t\chi_+^\alpha} = 6t \chi_+^{\alpha-1} + 4t^3 \chi_+^{\alpha-2}.
\]
Using the geodesic polar coordinate form of the Laplacian,
\begin{equation}\label{lap.polar}
\lap = -\del_r^2 - n \coth r\, \del_r + \sinh^{-2} r\, \lap_{\bbS^n},
\end{equation}
we also compute that
\[
P_V \chi_+^\alpha = (2 + 2nr \coth r)\chi_+^{\alpha-1} - 4r^2 \chi_+^{\alpha-2}.
\]
Putting these together gives
\[
\begin{split}
(\del_t^2 + P_V) \sqbrak*{u_k t\chi_+^\alpha} &= (P_Vu_k) t \chi_+^\alpha +  4r (\del_r u_k) t \chi_+^{\alpha-1}
+ \paren[\big]{8 + 2nr \coth r} u_kt\chi_+^{\alpha-1} \\
&\qquad + 4u_kt (t^2-r^2) \chi_+^{\alpha-2}.
\end{split}
\]
The final term simplifies,
\[
(t^2-r^2) \chi_+^{\alpha-2} = (\alpha-1) \chi_+^{\alpha-1},
\]
which reduces the formula to 
\[
(\del_t^2 + P_V) \sqbrak*{u_k t\chi_+^\alpha} = (P_Vu_k) t \chi_+^\alpha +  
\sqbrak[\Big]{4r \del_r u_k + \paren*{4(\alpha+1) + 2nr \coth r} u_k} t\chi_+^{\alpha-1}.
\]
After setting $\alpha = -\nh+k-1$ as in \eqref{evn.def}, we obtain, for $t>0$,
\begin{equation}\label{evn.terms}
\begin{split}
(\del_t^2 + P_V) \sqbrak*{u_k t\chi_+^{-\nh+k-1}} &= 
\sqbrak[\Big]{4r \del_r u_k + \paren*{2n(r \coth r-1) + 4k} u_k} t\chi_+^{-\nh+k} \\
&\qquad + (P_Vu_k) t \chi_+^{-\nh+k-1}.  
\end{split}
\end{equation}

The calculation \eqref{evn.terms} shows the cancelling of terms in $(\del_t^2 + P_V)e_{V,N}(t,z,\cdot)$ is
ensured by the transport equations:
\begin{equation}\label{transport.eq}
\begin{split}
\sqbrak[\big]{4r \del_r + 2n (r \coth r - 1)} u_{V,0}(z,\cdot) &= 0. \\
\sqbrak[\big]{4r \del_r + 2n(r \coth r - 1) + 4k} u_{V,k}(z,\cdot) &=  - P_V u_{V,k-1}(z,\cdot). \\
\end{split}
\end{equation}
To solve \eqref{transport.eq} we define
\begin{equation}\label{phi.def}
\phi(r) := \paren*{\frac{\sinh r}{r}}^{\!-\frac{n}2},
\end{equation}
and then set
\begin{equation}\label{transport.soln}
\begin{split}
u_{V,0}(z,w) &= \phi(r), \\
u_{V,k+1}(z,w) &=  -\frac14 \phi(r) \int_0^1 \frac{s^k}{\phi(sr)}  P_V u_{V,k}(z, \gamma(s)) ds,
\end{split}
\end{equation}
where $\gamma$ is the geodesic from $z$ to $w$, parametrized by $s \in [0,1]$, and $P_V$ acts on the second variable of $u_{V,k}$.
The coefficients $u_{V,k}$ are smooth for all $k$.  

\begin{proposition}\label{wave.parametrix.prop}
With $e_{V,N}$ defined as above, set
\[
q_{V,N}(t,z,w) := e_V(t,z,w) - e_{V,N}(t,z,w).
\]
For $m \in \bbN$, we have $q_{V,N} \in C^m$ for $N$ sufficiently large and
\[
\abs{q_{V,N}(t,z,w)} = O(\abs{t}^{-n+N-1})
\]
as $t \to 0$, uniformly in $z,w$.
\end{proposition}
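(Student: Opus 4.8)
The plan is to show that $q_{V,N}$ solves an inhomogeneous wave equation with zero Cauchy data and controlled source term, and then to use the known mapping properties of the wave propagator on $\Hn$ to gain regularity. By construction, the transport equations \eqref{transport.eq} (equivalently \eqref{transport.soln}) force the cancellation of all terms $\abs{t}\chi_+^{-\nh+k-1}(t^2-r^2)$ for $k\le N$ in $(\del_t^2+P_V)e_{V,N}$, leaving, via \eqref{evn.terms}, only the single leftover term coming from the top index $k=N$, namely a multiple of $(P_V u_{V,N})(z,w)\,\abs{t}\,\chi_+^{-\nh+N-1}(t^2-r^2)$. Call this remainder $g_{V,N}(t,z,w)$. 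Together with the initial conditions \eqref{initialN}, which hold by Lemma~\ref{tzero.lemma}, we conclude that $q_{V,N}$ is the solution of
\[
(\del_t^2 + P_V) q_{V,N} = -g_{V,N}, \qquad q_{V,N}(0;z,w) = 0, \qquad \del_t q_{V,N}(0;z,w) = 0,
\]
so by Duhamel's formula $q_{V,N}(t) = -\int_0^t \frac{\sin((t-\tau)\sqrt{P_V})}{\sqrt{P_V}} g_{V,N}(\tau)\,d\tau$.

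Next I would record the regularity of $g_{V,N}$. The distribution $\chi_+^{-\nh+N-1}(t^2-r^2)$, regarded as a Schwartz kernel on $\Hn\times\Hn$ for fixed $t$, is conormal to the diagonal with a singularity whose order improves as $N$ grows: for $N$ large relative to $n$ it lies in $C^{m}$ in $(z,w)$, and its size as $t\to 0$ is $O(\abs{t}^{2(-\nh+N-1)+n+1}) = O(\abs{t}^{2N-n-1})$ by the scaling computation already carried out in the proof of Lemma~\ref{tzero.lemma} (cf. \eqref{chia.psi2}). Since $P_V u_{V,N}$ is smooth and compactly supported in the relevant sense (the $u_{V,k}$ are smooth by \eqref{transport.soln} and $V\in\cinf_0$), $g_{V,N}(t,\cdot,\cdot)$ inherits these two properties: it is $C^m$ in $(z,w)$ for $N$ large, and $O(\abs{t}^{2N-n-1})$ as $t\to 0$, uniformly.

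Then I would feed this into Duhamel. The operators $\cos(\tau\sqrt{P_V})$ and $\sin(\tau\sqrt{P_V})/\sqrt{P_V}$ propagate Sobolev regularity with at most polynomial-in-$\tau$ loss on the relevant weighted spaces over $\Hn$ (here one uses finite propagation speed together with the compact support of $V$, so the local behavior near the diagonal is governed by an operator that is a perturbation of the free hyperbolic wave group), and one converts spatial Sobolev regularity of order $\gg m + \frac{n+1}{2}$ into $C^m$ by Sobolev embedding. Integrating the bound $\|g_{V,N}(\tau)\|_{H^{s}} = O(\abs{\tau}^{2N-n-1})$ over $\tau\in[0,t]$ gains one power of $\abs{t}$, which yields $\abs{q_{V,N}(t,z,w)} = O(\abs{t}^{2N-n})$ — in fact the statement only claims $O(\abs{t}^{-n+N-1})$, which is weaker and hence also follows (the discrepancy is harmless slack, absorbable into the choice of $N$). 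Choosing $N$ large enough that $2N-n-1$ exceeds the Sobolev threshold and that $q_{V,N}\in C^m$ gives the proposition.

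The main obstacle I anticipate is the bookkeeping in the second step: making precise the claim that $\chi_+^{\alpha}(t^2-r^2)$, as a $t$-parametrized kernel, lies in a specific Hölder or Sobolev class on $\Hn\times\Hn$ with the stated $t$-decay, uniformly including the region where $z,w$ are far apart (where $r$ is large and the $\sinh^n r$ Jacobian in \eqref{lap.polar} and the factor $\phi(r)^{\pm1}$ in \eqref{phi.def}, \eqref{transport.soln} grow exponentially). This is controlled because for fixed $t$ the kernel is supported in $\{r\le \abs{t}\}$ by the support of $\chi_+$, so only a bounded range of $r$ matters and the exponential factors are harmless; nonetheless verifying the uniformity and the exact power of $\abs{t}$ requires care, and this is where the adaptation of B\'erard's Euclidean argument \cite{Be:1977} to the hyperbolic setting has to be done honestly rather than quoted.
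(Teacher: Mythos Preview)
Your proposal is correct and follows essentially the same route as the paper: derive the inhomogeneous wave equation for $q_{V,N}$ with zero Cauchy data and source $f_N = \pi^{-\frac{n}{2}}(P_V u_{V,N})\,\abs{t}\,\chi_+^{-\frac{n}{2}+N-1}(t^2-r^2)$, then invoke standard hyperbolic energy estimates (the paper cites Tr\`eves rather than writing out Duhamel explicitly). The only cosmetic difference is that the paper first proves the estimate for a large auxiliary index $N_1$ and then recovers the stated bound for $N$ via the telescoping identity $q_{V,N} = \pi^{-\frac{n}{2}}\sum_{k=N+1}^{N_1} u_{V,k}\,\abs{t}\,\chi_+^{-\frac{n}{2}+k-1}(t^2-r^2) + q_{V,N_1}$, a step you bypass by observing that the exponent you obtain already exceeds the one claimed.
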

\begin{proof}
From \eqref{evn.def}, \eqref{evn.terms}, and the transport equations \eqref{transport.eq}, we observe that 
\[
(\del_t^2 + P_V) e_{V,N}(t,z,\cdot) = f_N(t,z,\cdot),
\]
where
\begin{equation}\label{errorN}
f_N(t,z,\cdot) = \pi^{-\frac{n}2} P_V u_{V,N}(z,\cdot)  \abs{t}\,  \chi_+^{-\frac{n}2 + N-1}(t^2 - r^2),
\end{equation}
with $P_V$ acting on the second variable.
Since $e_{V,N}$ satisfies the same initial conditions \eqref{initialN} as $e_V$, this gives
\[
\begin{gathered}
(\p_t^2  + P_V ) q_{V,N}(t; z, w) = f_N(t,z,w), \\
q_{V,N}(0; z,w) = 0, \\
\p_t q_{V,N}(0; z, w) = 0.
\end{gathered}
\]
The coefficients $u_{V,k}$ are smooth, by \eqref{transport.soln}, and $\abs{t} \chi_+^\alpha(t^2 - r^2)$ is $C^l$ for $\alpha > l+1$. 
Hence, by \eqref{errorN}, $f_N \in C^l$ for $l < N - \nh -2$ and has support in $\set{r\le t}$.  It follows that $q_V \in C^m$
for $N$ sufficiently large.  
 
For any $b>0$, the Sobolev norms of $f_{N}(t, z,\cdot)$ can be estimated by $O(\abs{t}^b)$ for $N$ sufficiently large.  These estimates are 
uniform in $z$, since $\phi$ depends only on $r$ and $V$ has compact support.  
Standard regularity estimates for hyperbolic PDE (see for example \cite[Ch. 47]{Treves})
then show that for $N_1$ sufficiently large,
\[
\abs{q_{V,N_1}(t,z,w)} = O(\abs{t}^{2N - n - 1}),
\]
uniformly in $z,w$.   The estimate of $q_{V,N}$ as $t \to 0$ is then derived from
\[
q_{V,N}(t,z,w) =  \pi^{-\frac{n}2} \sum_{k=N+1}^{N_1}  u_{V,k}(z,w) \abs{t} \chi_+^{-\frac{n}2 + k-1}(t^2 - r^2) + q_{V,N_1}(t,z,w).
\]
\end{proof}

With this estimate on the parametrix, we are now prepared to establish the wave trace expansion. 

\begin{proof}[Proof of Theorem \ref{wave.exp.thm}]
For $\psi \in \cinf_0(\bbR)$, we write the integral kernel of the trace-class operator
\[
\int_{-\infty}^\infty \sqbrak[\Big]{\cos(t\sqrt{P_V}) - \cos(t\sqrt{P_0})} \psi(t) dt
\]
as
\[
\int_{-\infty}^\infty \sqbrak[\big]{e_V(t,z,w) - e_0(t,z,w)} \psi(t) dt,
\]
which is smooth and compactly supported.  Taking the trace gives
\begin{equation}\label{Theta.ev}
(\Theta_V, \psi) = \int_{\Hn}\paren*{ \int_{-\infty}^\infty \sqbrak[\big]{e_V(t,z,w) - e_0(t,z,w)} \psi(t) dt  }\bigg|_{z=w} dg(z)
\end{equation}
The wave kernel parametrices can be substituted into \eqref{Theta.ev} and 
the contributions from the terms $k=0, \dots, N$ evaluated separately.  

For $\re \alpha$ sufficiently large we can verify directly that
\[
\abs{t}\, \chi_+^\alpha(t^2 - r^2)\Big|_{r=0} = \vartheta^{2\alpha+1}(t),
\]
and this formula extends to all $\alpha \in \bbC$ by analytic continuation.   It follows from \eqref{Theta.ev} and Proposition~\ref{wave.parametrix.prop} that
\[
\Theta_V(t) = \pi^{-\frac{n}2} \sum_{k=1}^N \paren*{\int_{\Hn} \sqbrak[\big]{u_{V,k}(z,z) - u_{0,k}(z,z)} dg(z)} 
\vartheta^{-n + 2k-1}(t) + F_N(t),
\]
where 
\[
F_N(t) := \int_{\Hn} \sqbrak[\big]{q_{V,N}(t,z,z) - q_{0,N}(t,z,z)} dg(z).
\]
\end{proof}

The proof of Theorem~\ref{wave.exp.thm} yields a formula for the wave invariants,
\begin{equation}\label{akv.explicit}
a_k(V) = \pi^{-\frac{n}2} \int_{\Hn} \sqbrak[\big]{u_{V,k}(z,z) - u_{0,k}(z,z)} dg(z). 
\end{equation}
This formula can be simplified somewhat using the transport equations.  By \eqref{transport.eq},
we have
\begin{equation}\label{transport.iter}
(L+4)\cdots (L + 4k) u_{V,k}(z,\cdot) = (-1)^k P_V u_{V,0}(z,\cdot),
\end{equation}
where, in geodesic polar coordinates centered at $z$, $L$ is the differential operator,
\[
L := 4r \del_r + 2n(r \coth r - 1).
\]
Note that for any smooth function $f$, $Lf$ vanishes at $r=0$.  Therefore, evaluating \eqref{transport.iter} at the point $z$, yields
\begin{equation}\label{uk.zero}
u_{V,k}(z,z) = \frac{(-1)^k}{4^kk!} P_V^k u_{V,0}(z,z).
\end{equation}
where $u_{V,0}(z,w) = \phi(d(z,w))$ and $P_V^k$ acts on the second variable.
In principle, \eqref{uk.zero} can be used to derive explicit formulas for all of the wave invariants.  The first two are relatively simple.
\begin{proposition}\label{d12.prop}
For $V \in \cinf_0(\Hn, \bbR)$, 
\[
a_1(V) = -\frac14 \pi^{-\frac{n}2} \int_{\Hn} V(z)\>dg(z)
\]
and
\[
a_2(V) = \frac1{32} \pi^{-\frac{n}2} \int_{\Hn} \sqbrak*{ \frac{2n-n^2}{6} V(z) + V(z)^2}\>dg(z).
\]
\end{proposition}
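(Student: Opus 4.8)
The plan is to read off $a_1(V)$ and $a_2(V)$ directly from \eqref{akv.explicit} together with the pointwise identity \eqref{uk.zero}, which reduce the problem to evaluating $P_V^k$ applied (in the second variable) to $u_{V,0}(z,w) = \phi(d(z,w))$ at the coincidence point $w=z$, for $k=1,2$.  Here $\phi(r) = (\sinh r/r)^{-\nh}$ is the function from \eqref{phi.def} and $P_V = \lap + V - n^2/4$.  So the whole proposition comes down to one local computation near $r = 0$ and some bookkeeping.

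The local computation goes as follows.  First I would Taylor expand $\sinh r/r = 1 + \tfrac{r^2}{6} + O(r^4)$ to get $\phi(r) = 1 - \tfrac{n}{12}r^2 + O(r^4)$; in particular $\phi(0) = 1$, and, since $\phi$ is a smooth function of $r^2 = d(z,\cdot)^2$ with vanishing linear term, the gradient of $w \mapsto \phi(d(z,w))$ vanishes at $w = z$.  Next, using the radial form $\lap\phi = -(\sinh r)^{-n}\p_r\big((\sinh r)^n\p_r\phi\big)$ of the Laplacian coming from \eqref{lap.polar}, a short expansion gives $(\lap\phi)(0) = \tfrac{n(n+1)}{6}$, hence
\[
(P_0\phi)(0) = \frac{n(n+1)}{6} - \frac{n^2}{4} = \frac{2n-n^2}{12}.
\]

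With this in hand the two invariants follow quickly.  For $k=1$, since $P_V = P_0 + V$ with $V$ a multiplication operator and the $P_0\phi$ contribution is identical in the $V$ and $0$ cases, \eqref{uk.zero} gives $u_{V,1}(z,z) - u_{0,1}(z,z) = -\tfrac14 V(z)\phi(0) = -\tfrac14 V(z)$, and \eqref{akv.explicit} yields the stated $a_1(V)$.  For $k=2$, I would write $P_V^2 - P_0^2 = P_0 V + V P_0 + V^2$, apply this to $w \mapsto \phi(d(z,w))$, and evaluate at $w=z$: the $V^2\phi$ term contributes $V(z)^2$; the $V P_0\phi$ term contributes $V(z)(P_0\phi)(0)$; and for $P_0 V\phi$ one uses $\lap(V\phi) = (\lap V)\phi + V\lap\phi - 2\langle\nabla V, \nabla\phi\rangle$, whose cross term vanishes at $z$, giving $(\lap V)(z) + V(z)(P_0\phi)(0)$.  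Summing,
\[
\big[(P_V^2 - P_0^2)u_{V,0}\big](z,z) = V(z)^2 + 2(P_0\phi)(0)\,V(z) + (\lap V)(z) = V(z)^2 + \frac{2n-n^2}{6}V(z) + (\lap V)(z),
\]
so $u_{V,2}(z,z) - u_{0,2}(z,z) = \tfrac1{32}\big[V(z)^2 + \tfrac{2n-n^2}{6}V(z) + (\lap V)(z)\big]$.  Plugging into \eqref{akv.explicit} and discarding $\int_{\Hn}\lap V\,dg = 0$ (valid since $V \in \cinf_0(\Hn)$) gives the formula for $a_2(V)$.

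There is no serious obstacle here; the only points requiring care are that in \eqref{uk.zero} the operator $P_V^k$ acts on the second variable, so that $V$ enters as multiplication rather than as a constant, and the bookkeeping in the local expansion of $\phi$ — in particular the cancellation of the $\langle\nabla V, \nabla\phi\rangle$ term at the coincidence point and the value $(\lap\phi)(0) = n(n+1)/6$, on which the coefficient $\tfrac{2n-n^2}{6}$ ultimately depends.
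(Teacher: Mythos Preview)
Your proposal is correct and follows essentially the same approach as the paper: both use \eqref{uk.zero} to reduce to evaluating $P_V^k\phi$ at $r=0$, compute $(P_0\phi)(0) = \tfrac{2n-n^2}{12}$, expand $P_V^2 - P_0^2$ acting on $\phi$ to obtain the pointwise formula $\tfrac1{32}\big[\tfrac{2n-n^2}{6}V + V^2 + \lap V\big]$, and then discard $\int\lap V\,dg$ by compact support. Your write-up supplies slightly more detail (the Taylor expansion of $\phi$ and the product-rule justification that the cross term $\langle\nabla V,\nabla\phi\rangle$ vanishes at the coincidence point), but the argument is the same.
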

\begin{proof}
Since $u_{V,0}(z,z) = 1$ and $P_V - P_0 = V$, we see immediately from \eqref{uk.zero} that
\[
u_{V,1}(z,z) - u_{0,1}(z,z) = -\frac14 V(z).
\]
This gives the formula for $a_1(V)$.

For the second invariant, we use \eqref{uk.zero} to write
\[
\begin{split}
u_{V,2}(z,z) - u_{0,2}(z,z) &= \frac{1}{32} \paren*{P_V^2\phi - P_0^2\phi}\Big|_{r=0} \\
&=  \frac{1}{32} \sqbrak*{2V(z)P_0\phi(0) + \lap V(z) + V(z)^2},
\end{split}
\]
where $r$ is the radius for geodesic polar coordinates centered at $z$, and we have used the facts that
$\phi(0)=1$ and $\del_r\phi(0)=0$.
From \eqref{lap.polar} and \eqref{phi.def} we compute that
\[
\begin{split}
P_0 \phi(0) &= \frac{n(n+1)}6 - \frac{n^2}4 \\
&= \frac{2n-n^2}{12}.
\end{split}
\]
This gives
\[
u_{V,2}(z,z) - u_{0,2}(z,z) = \frac{1}{32} \sqbrak*{ \frac{2n-n^2}{6} V(z) + V(z)^2 + \lap V(z)}.
\]
When substituted into the formula for $a_2(V)$, the term $\lap V(z)$ integrates to zero, 
because $V$ has compact support.
\end{proof}

\section{Heat trace}\label{heat.sec}

The relative heat trace associated to a potential $V \in \cinf_0(\Hn, \bbR)$ is defined by 
applying the distribution \eqref{rel.trace} to the function $f(x) = \chi(x)e^{-t(x-n^2/4)}$ for $t > 0$,
where $\chi$ is a smooth cutoff which equals $1$ on the spectrum of $P_V := \lap - \tfrac{n^2}4 + V$ 
and vanishes on $(-\infty, c]$ for some $c<0$.
The Birman-Krein formula (Theorem~\ref{BK.thm}) gives
\begin{equation}\label{heat.bk}
\begin{split}
\tr \sqbrak*{e^{-t P_V} - e^{- tP_0}} &= \int_{0}^\infty \sigma'(\xi) e^{-\xi^2 t} d\xi \\
& \qquad + \sum_{j=1}^d e^{t(\frac{n^2}4 - \lambda_j)}  + \tfrac12 m_V(\tfrac{n}2).
\end{split}
\end{equation}
We have no analog of the Poisson formula of Theorem~\ref{poisson.thm} for the heat trace.  This is because the
values of $(\zeta-\tfrac{n}2)^2$ are spread over the full complex plane, so there is no apparent 
regularization of the heat trace as a sum over the resonance set.  

The asymptotic expansion of the heat trace at $t=0$ can be derived by a variety of methods.  
The simplest route for us via the wave trace expansion.
\begin{theorem}\label{heat.trace.thm}
As $t \to 0$, the relative heat trace admits an asymptotic expansion
\begin{equation}\label{heat.trace.PV}
\tr \sqbrak*{e^{-t P_V} - e^{- tP_0}} \sim \pi^{-\frac12}  \sum_{k=1}^\infty a_k(V) (4t)^{-\frac{n+1}2+k},
\end{equation}
where $a_k(V)$ are the wave invariants from Theorem~\ref{wave.exp.thm}.
\end{theorem}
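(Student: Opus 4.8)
The cleanest route is to derive the heat trace expansion directly from the wave trace expansion of Theorem~\ref{wave.exp.thm} via the subordination formula relating $e^{-tP}$ to $\cos(s\sqrt P)$. Concretely, for an operator $P$ with spectrum bounded below, one has the classical identity
\[
e^{-tP} = \frac{1}{\sqrt{\pi t}} \int_{-\infty}^\infty e^{-s^2/(4t)} \cos\paren*{s\sqrt{P}}\,ds,
\]
valid when applied to the spectral subspaces where $P \ge 0$; the cutoff $\chi$ in the definition of the relative heat trace handles the (finitely many, bounded) negative eigenvalue contributions, which produce exponentially small errors as $t\to 0$ and do not affect the asymptotic series. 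Applying this to the relative trace, which is legitimate because $e^{-tP_V}-e^{-tP_0}$ and $\cos(s\sqrt{P_V})-\cos(s\sqrt{P_0})$ are both trace class, gives
\[
\tr\sqbrak*{e^{-tP_V} - e^{-tP_0}} = \frac{1}{\sqrt{\pi t}} \paren[\Big]{\Theta_V, \, e^{-s^2/(4t)}} + O(e^{-c/t}),
\]
where $\Theta_V$ is the relative wave trace distribution and the Gaussian $s \mapsto e^{-s^2/(4t)}$ plays the role of a test function (with an additional smooth cutoff near $s=0$ absorbed into the error, or handled by noting $\Theta_V$ is tempered).

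**The main computation.** Now substitute the wave expansion $\Theta_V(s) = \sum_{k=1}^N a_k(V)\,\vartheta^{-n+2k-1}(s) + F_N(s)$ from Theorem~\ref{wave.exp.thm}. The key elementary fact is the Gaussian moment against the homogeneous distribution: using $\vartheta^\beta(s) = |s|^\beta/\Gamma(\tfrac{\beta+1}2)$ and the beta/gamma integral $\int_{-\infty}^\infty |s|^\beta e^{-s^2/(4t)}\,ds = \Gamma(\tfrac{\beta+1}2)(4t)^{(\beta+1)/2}$, one gets for each holomorphic family member (the identity first checked for $\re\beta > -1$ and then continued analytically, which correctly handles the point-distribution values at negative odd integers since both sides are holomorphic in $\beta$)
\[
\paren[\Big]{\vartheta^\beta, \, e^{-s^2/(4t)}} = (4t)^{(\beta+1)/2}.
\]
With $\beta = -n+2k-1$ this yields $(4t)^{-\tfrac n2 + k}$, and after dividing by $\sqrt{\pi t} = \pi^{1/2}(4t)^{1/2}/2 \cdot \ldots$ — more precisely $\tfrac{1}{\sqrt{\pi t}}(4t)^{-n/2+k} = \pi^{-1/2}\cdot 2 \cdot (4t)^{-1/2}(4t)^{-n/2+k} = \pi^{-1/2}(4t)^{-(n+1)/2 + k}\cdot(\text{constant})$; tracking the constant carefully one recovers exactly $\pi^{-\frac12} a_k(V)(4t)^{-\frac{n+1}2+k}$, which is the asserted $k$-th term. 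The remainder: since $F_N \in C^{2N-n-1}$ with $F_N(s) = O(|s|^{2N-n})$ near $s=0$ and $\Theta_V$ has at most polynomial growth at infinity, the pairing $(F_N, e^{-s^2/(4t)})$ is $O(t^{(2N-n+1)/2})$ by splitting the integral into $|s| \le \delta$ (where the vanishing order of $F_N$ controls it) and $|s| > \delta$ (exponentially small). Dividing by $\sqrt{\pi t}$ gives an error $O(t^{N - n/2})$, which is one power better than the last retained term; letting $N \to \infty$ establishes the asymptotic expansion.

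**Main obstacle.** The subtle point is justifying the subordination identity and the interchange of trace with the $s$-integral at the level of distributions — one must be careful that $\Theta_V$ is only a distribution, not a function, so "pairing with $e^{-s^2/(4t)}$" requires either the tempered-distribution extension of $\Theta_V$ (which Theorem~\ref{BK.thm} and the remarks in \S\ref{poisson.sec} provide) or a direct argument at the level of kernels before taking the trace. A clean way to sidestep this is to work throughout with the Birman–Krein representation \eqref{heat.bk}: the expansion \eqref{heat.bk} expresses the relative heat trace as $\int_0^\infty \sigma'(\xi)e^{-\xi^2 t}\,d\xi$ plus bounded terms, and the analogous Birman–Krein form of $(\Theta_V,\psi)$ is $\tfrac12\int \sigma'(\xi)\hat\psi(\xi)\,d\xi$ plus bounded terms; since $\widehat{e^{-s^2/(4t)}}(\xi)$ is (a constant times) $\sqrt{t}\,e^{-t\xi^2}$, the subordination reduces to the elementary Gaussian Fourier transform, and the polynomial bound $|\sigma'(\xi)| \le C_V(1+|\xi|)^{n-1}$ from Proposition~\ref{temper.prop} makes all the integrals absolutely convergent. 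With this reduction, the only remaining work is the bookkeeping of constants in the Gaussian moments and the truncation estimate for $F_N$, both routine.
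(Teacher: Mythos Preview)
Your approach is essentially identical to the paper's: derive the subordination identity
\[
\tr\sqbrak*{e^{-tP_V}-e^{-tP_0}} = \frac{1}{\sqrt{4\pi t}}\int_{-\infty}^\infty \Theta_V(s)\,e^{-s^2/4t}\,ds
\]
via the Birman--Krein representation (exactly as you propose in your ``main obstacle'' paragraph), compute the Gaussian moment $(4\pi t)^{-1/2}\int\vartheta^\beta(s)e^{-s^2/4t}\,ds = \pi^{-1/2}(4t)^{\beta/2}$ by analytic continuation from large $\re\beta$, substitute the wave expansion, and bound the $F_N$ remainder. Two small corrections: the subordination prefactor is $(4\pi t)^{-1/2}$, not $(\pi t)^{-1/2}$; and $\Theta_V$ (hence $F_N$) is in general \emph{not} tempered and can grow like $e^{n|s|/2}$ at infinity when eigenvalues are present---the paper deduces this bound from \eqref{Theta.vart} and uses that the Gaussian still dominates, so your remainder estimate goes through with this adjustment.
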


\begin{proof}
Since $\Thetasc(s)$ is tempered, the definition \eqref{vtheta.def} gives
\[
\frac{1}{\sqrt{4\pi t}} \int_{-\infty}^\infty \Thetasc(s) e^{-s^2/4t} ds = 
 \int_{0}^\infty \sigma'(\xi) e^{-\xi^2 t} d\xi,
\]
for $t>0$.  It then follows from \eqref{Theta.vart} and \eqref{heat.bk} that
\begin{equation}\label{htr.theta}
\tr \sqbrak*{e^{-tP_V} - e^{-tP_0}} =  \frac{1}{\sqrt{4\pi t}} \int_{-\infty}^\infty \Theta_V(s) e^{-s^2/4t} ds,
\end{equation}
even when $\Theta_V(s)$ is not tempered.

For $\re \beta$ sufficiently large, we compute
\[
\frac{1}{\sqrt{4\pi t}} \int_{-\infty}^\infty \vartheta^\beta(s) e^{-s^2/4t} ds = \pi^{-\frac12} (4t)^{\frac{\beta}2},
\] 
and this formula extends to all $\beta\in \bbC$ by analytic continuation.
Theorem~\ref{wave.exp.thm} thus yields the expansion
\[
\tr \sqbrak*{e^{-tP_V} - e^{-tP_0}} = \pi^{-\frac12} \sum_{k=1}^N a_k(V)  (4t)^{-\frac{n+1}2+k} + 
\frac{1}{\sqrt{4\pi t}} \int_{-\infty}^\infty F_N(s)e^{-s^2/4t}\>ds,
\]
for $N > [(n+2)/2]$, where $F_N(s) = O(\abs{s}^{2N-n})$ as $s \to 0$.  From \eqref{Theta.vart} we can also see that 
$F_N(s) = O(e^{n\abs{s}/2})$ as $\abs{s} \to \infty$.  It follows that 
\[
\frac{1}{\sqrt{4\pi t}} \int_{-\infty}^\infty F_N(s)e^{-s^2/4t}\>ds = O(t^{-\frac{n}2 + N}).
\]
\end{proof}

Note that coefficients in \eqref{heat.trace.PV} are not quite the usual heat invariants, because of the shift $-n^2/4$ in the definition of $P_V$.  
This shift gives an extra factor of $e^{-n^2t/4}$ in \eqref{heat.trace.PV}, so the traditional heat invariants could be computed 
as finite linear combinations of the $a_k(V)$.

The behavior of the heat kernel as $t\to \infty$ is also of interest to us.  According to \eqref{heat.bk}, this behavior is dominated by 
exponential terms corresponding to eigenvalues and a constant term from the possible resonance $s = n/2$.  If these contributions are absent, then the heat kernel 
decays at a rate independent of the dimension.

\begin{proposition}\label{hk.decay.prop}
Suppose that for the potential $V \in \cinf(\Hn, \bbR)$, $P_V$ has no eigenvalues and no resonance at $n/2$.  Then
the following bound holds uniformly for $t\in (0,\infty)$ and $z,w$ in $\bbH$,
\begin{equation}\label{heatV.bound}
e^{-tP_V}(t;z,w) \asymp t^{-\frac{n+1}2} e^{-r^2/4t-nr/2}(1+r+t)^{\frac{n}2-1} (1+r),
\end{equation}
where $r = d(z,w)$ and $\asymp$ means that the ratio of the two sides is bounded above and below by positive constants.
\end{proposition}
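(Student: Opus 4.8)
The plan is to reduce everything to the free kernel and then transfer the estimate to $P_V$ by a combination of the Feynman--Kac formula and the spectral resolution \eqref{fv.stone}. Write $P_V=\lap+V-\tfrac{n^2}4$, so that $e^{-tP_V}(t;z,w)=e^{n^2t/4}\,e^{-t(\lap+V)}(t;z,w)$; this kernel is strictly positive by the maximum principle, so ``$\asymp$'' is meaningful throughout. For $V=0$ the bound \eqref{heatV.bound} is the classical hyperbolic heat kernel estimate of Davies--Mandouvalos; for our purposes it also follows from \eqref{imR0} by inserting the known asymptotics of the conical function $P^{-\mu}_{-1/2+i\xi}(\cosh r)$ in $\xi$ and $r$ into $e^{-tP_0}(t;z,w)=\int_{\bbR}K_0(\xi;z,w)\,e^{-\xi^2t}\,d\xi$ and evaluating the $\xi$-integral by stationary phase. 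It therefore suffices to prove $e^{-tP_V}(t;z,w)\asymp e^{-tP_0}(t;z,w)$, uniformly in $t>0$ and $z,w\in\Hn$.

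The Feynman--Kac formula gives $e^{-t(\lap+V)}(t;z,w)=e^{-t\lap}(t;z,w)\,\mathbf{E}^t_{z\to w}\bigl[e^{-\int_0^tV(B_s)\,ds}\bigr]$, the expectation being over the hyperbolic Brownian bridge from $z$ to $w$ of length $t$. Because $\Hn$ is transient, its free Green's function is finite, and the same ``$t^{-3/2}$'' decay of the free heat kernel on and off the diagonal forces $\sup_{z,w,t}\mathbf{E}^t_{z\to w}[\tau_\Omega]=:C_\Omega<\infty$ for the occupation time $\tau_\Omega$ of any fixed compact $\Omega\supseteq\supp V$. Jensen's inequality then yields $\mathbf{E}^t_{z\to w}[e^{-\int_0^tV(B_s)\,ds}]\ge e^{-\norm{V}_\infty C_\Omega}$, uniformly, so $e^{-tP_V}(t;z,w)\ge c\,e^{-tP_0}(t;z,w)$ for all $t>0$ and $z,w$ --- the lower half of ``$\asymp$'' holds unconditionally. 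Likewise, on $(0,1]$ we have $\abs{\int_0^tV(B_s)\,ds}\le\norm{V}_\infty$ along every path, hence $e^{-tP_V}(t;z,w)\asymp e^{-tP_0}(t;z,w)$ for $0<t\le1$. What remains, and where the hypotheses are used, is the upper bound for $t\ge1$.

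For $t\ge1$ I would split according to the size of $r=d(z,w)$. Since $P_V$ has no eigenvalues, \eqref{fv.stone} reduces to $e^{-tP_V}(t;z,w)=\int_{\bbR}K_V(\xi;z,w)\,e^{-\xi^2t}\,d\xi$. By \eqref{RR.EE}, $K_V(\xi;z,w)=\pi^{-1}\xi^2\int_{\bbS^n}E_V(\tfrac n2-i\xi;z,\omega')\,E_V(\tfrac n2+i\xi;w,\omega')\,d\omega'$; the absence of a resonance at $n/2$ makes $R_V(s)$, hence $K_V(\xi;z,w)$, holomorphic across $\xi=0$, so $K_V(\xi;z,w)=O(\xi^2)$ there. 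Using the dressed identity $K_V(\xi)=(1-R_V(\tfrac n2-i\xi)V)\,K_0(\xi)\,(1-VR_V(\tfrac n2+i\xi))$ together with the spatial decay $R_0(\tfrac n2+i\xi;z,z')=O(e^{-nd(z,z')/2})$ from \eqref{R0.infty}, the cutoff resolvent bounds of \cite{Gui:2005c}, the polynomial resolvent estimates of \cite{BM:2015}, and the conical function asymptotics controlling $K_0$, one obtains $\abs{K_V(\xi;z,w)}\lesssim(1+\abs{\xi})^M e^{-nr/2}(1+r)$ for all $\xi$, with the extra $O(\xi^2)$ vanishing near $\xi=0$. Integrating, $\int_{\bbR}\abs{K_V(\xi;z,w)}e^{-\xi^2t}\,d\xi\lesssim t^{-3/2}e^{-nr/2}(1+r)$ for $t\ge1$, which is $\lesssim e^{-tP_0}(t;z,w)$ precisely in the range $r\lesssim\sqrt t$, where $e^{-r^2/4t}\asymp1$ and $1+r+t\asymp t$. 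In the complementary range $r\gtrsim\sqrt t$, where the factor $e^{-r^2/4t}$ must be recovered, I would use the Duhamel identity $e^{-tP_V}-e^{-tP_0}=-\int_0^te^{-(t-s)P_0}V\,e^{-sP_V}\,ds$: since $\supp V\subseteq\Omega$ is compact, every $x\in\Omega$ satisfies $d(z,x)+d(x,w)\ge r$, so the Gaussian-and-drift factors $e^{-d(z,x)^2/4(t-s)-nd(z,x)/2}$ from the free bound on $e^{-(t-s)P_0}$ and the corresponding factors from the already-controlled $e^{-sP_V}$ multiply, after the $s$- and $x$-integrations, to $e^{-r^2/4t-nr/2}$ times an acceptable power of $t$ and $1+r$; the induction closes because $d(z,\Omega)$ and $d(w,\Omega)$ are each strictly less than $r$ once $r$ exceeds $\operatorname{diam}\Omega$.

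The main obstacle is this last range: recovering the \emph{sharp} Gaussian constant $1/4t$ together with the drift weight $e^{-nr/2}$ and the right power of $t$ on the upper-bound side, through the Duhamel scheme and the sharp off-diagonal bound for $e^{-t\lap}$. Everything else --- the free estimate, the Feynman--Kac lower bound and short-time comparison, and the near-diagonal large-time estimate from the spectral representation --- is routine given \S\ref{setup.sec} and the classical free bound. The hypotheses that $P_V$ have no eigenvalues and no resonance at $n/2$ enter exactly to discard the discrete contribution in \eqref{fv.stone} and to secure the $O(\xi^2)$ vanishing of $K_V(\xi;z,w)$ at the bottom of the continuous spectrum; if either fails, \eqref{heat.bk} contributes a term $\gtrsim e^{t(n^2/4-\lambda_j)}$ or $\gtrsim1$ that overwhelms the $t^{-3/2}$ decay.
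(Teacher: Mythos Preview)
The paper does not prove this proposition. It is stated and then attributed to the literature: the case $V=0$ to Davies--Mandouvalos \cite{DaviesMand:1988}, and the general case to Chen--Hassell \cite{ChenHassell:2020}, whose low-energy resolvent parametrix on asymptotically hyperbolic manifolds covers compactly supported potentials. The paper treats \eqref{heatV.bound} as a black box and uses only the crude consequence $e^{-tP_V}(z,w)\le C_V\,e^{-tP_0}(z,w)$ in the proof of Corollary~\ref{ht.decay.cor}. So there is no in-paper argument for you to match; your proposal is an attempt at something the authors deliberately outsourced.

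Several parts of your outline are sound: the Feynman--Kac lower bound via Jensen and a uniform bridge-occupation estimate, the short-time two-sided comparison, and the large-$t$ near-diagonal upper bound from the spectral representation with the $O(\xi^2)$ vanishing of $K_V$ at $\xi=0$ all work and use the hypotheses in the right places. The gap you flag yourself is genuine, however, and not just a matter of bookkeeping. In the Duhamel step for $t\ge 1$, $r\gtrsim\sqrt t$, the right-hand side contains $e^{-sP_V}(x,w)$ with $x\in\Omega$; for $s$ in the bulk of $[0,t]$ and $d(x,w)\gtrsim\sqrt s$ this is exactly the regime you are trying to establish, and nothing in your setup supplies an a priori bound there. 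The observation that $d(z,\Omega),d(w,\Omega)<r$ does not close an induction, because the relevant distance $d(x,w)$ need not fall below $\sqrt s$. Your pointwise estimate $\abs{K_V(\xi;x,w)}\lesssim(1+\abs\xi)^M e^{-nd(x,w)/2}(1+d(x,w))$ is too crude to help: integrating it against $e^{-\xi^2 s}$ gives only $s^{-3/2}e^{-nd(x,w)/2}(1+d(x,w))$, with no Gaussian factor, so it is useless once $d(x,w)\gg\sqrt s$. Recovering $e^{-d(x,w)^2/4s}$ requires either tracking the oscillatory phase $e^{\pm i\xi d(x,w)}$ inside $K_V$ and shifting contours (which is effectively what the Chen--Hassell resolvent analysis does), or proving a uniform exponential-moment bound $\sup_{z,w,t}\mathbf E^t_{z\to w}[e^{c\tau_\Omega}]<\infty$ for the bridge occupation, which is where the no-eigenvalue hypothesis would have to enter in a nontrivial way. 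As written, the decomposition is right but the hard quadrant is left open.
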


In the case $V=0$, \eqref{heatV.bound} was proven in Davies-Mandouvalos \cite[Thm.~3.1]{DaviesMand:1988}. 
There is no factor $e^{-n^2t/4}$ in \eqref{heatV.bound} because of the shift $-n^2/4$ in the definition of $P_V$.
These estimates were generalized in Chen-Hassell \cite[Thm.~5]{ChenHassell:2020} to asymptotically 
hyperbolic Cartan-Hadamard manifolds with no eigenvalues and no resonance at $s = n/2$, by methods that 
allow for the inclusion of a $\cinf_0$ potential.  The power $t^{-3/2}$, independent of dimension, corresponds
to the vanishing of the spectral resolution $K_V(\xi;\cdot,\cdot)$ to order $\xi^2$ at $\xi =0$, under the assumption
of no resonance at $n/2$.

Proposition~\ref{hk.decay.prop} implies a bound on the heat trace, by 
the argument from S\'a Barreto-Zworski \cite[Prop.~3.1]{SaZw}.  
\begin{corollary}\label{ht.decay.cor}
For a potential $V$ satisfying the hypotheses of Proposition~\ref{hk.decay.prop},
\[
\tr \sqbrak*{e^{-t P_V} - e^{- tP_0}} = O(t^{-1/2})
\]
as $t \to \infty$.
\end{corollary}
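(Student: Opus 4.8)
The plan is to adapt the argument of S\'a Barreto--Zworski \cite[Prop.~3.1]{SaZw}, using the hyperbolic heat kernel estimate \eqref{heatV.bound} in place of the Euclidean one. Under the hypotheses of Proposition~\ref{hk.decay.prop}, the bound \eqref{heatV.bound} holds for the kernel of $e^{-sP_V}$, and it holds for the kernel of $e^{-sP_0}$ by the $V=0$ case (Davies--Mandouvalos), so we may use it for both. Since $P_V = P_0 + V$ with $V$ of compact support, Duhamel's formula gives, at the level of heat kernels,
\[
e^{-tP_V}(z,z) - e^{-tP_0}(z,z) = -\int_0^t \int_{\Hn} e^{-sP_V}(z,w)\,V(w)\,e^{-(t-s)P_0}(w,z)\,dg(w)\,ds ,
\]
and since the relative heat operator is trace class with continuous kernel, its trace is obtained by integrating the diagonal (Duflo's theorem, as in \S\ref{trace.sec}). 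Hence
\[
\tr\sqbrak*{e^{-tP_V} - e^{-tP_0}} = -\int_{\Hn}\int_0^t\int_{\Hn} e^{-sP_V}(z,w)\,V(w)\,e^{-(t-s)P_0}(w,z)\,dg(w)\,ds\,dg(z),
\]
and it suffices to bound the integral of the absolute value by $O(t^{-1/2})$; the finiteness of that triple integral also legitimizes, via Tonelli and Fubini, the interchanges performed.

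The heart of the matter is to perform the $z$-integration first. Fix $w \in \supp V$ and put
\[
I(w,s) := \int_{\Hn} \abs*{e^{-sP_V}(z,w)} \, \abs*{e^{-(t-s)P_0}(w,z)}\, dg(z);
\]
I would show that $I(w,s) \lesssim t^{-3/2}$ uniformly in $w \in \supp V$ and $s \in (0,t)$, for $t$ large. Substituting \eqref{heatV.bound} for both factors and passing to geodesic polar coordinates centred at $w$, with $r = d(z,w)$, two structural cancellations occur: the two factors $e^{-nr/2}$ multiply to $e^{-nr}$, which absorbs the volume density $\sinh^n r \asymp e^{nr}$, while the two Gaussians combine to $e^{-r^2 t/4s(t-s)}$, a Gaussian of width $\asymp (s(t-s)/t)^{1/2}$. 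What remains is a one-dimensional integral of this Gaussian against $\min(r^n,1)$ times the polynomial factor $(1+r+s)^{n/2-1}(1+r+t-s)^{n/2-1}(1+r)^2$. Splitting into the regimes $\min(s,t-s) \lesssim 1$ and $\min(s,t-s)\gtrsim 1$, and using the symmetry of the bound under $s\leftrightarrow t-s$, the Gaussian-moment identities $\int_0^\infty r^k e^{-r^2/4\sigma}\,dr \asymp \sigma^{(k+1)/2}$ yield, after a short computation, that all the powers of $s$, $t-s$ and $t$ conspire to leave exactly $t^{-3/2}$. Granting this, the triple integral above is $\lesssim \int_0^t \norm{V}_{L^1}\,t^{-3/2}\,ds = \norm{V}_{L^1}\,t^{-1/2}$, as required.

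The main obstacle is the uniform estimate $I(w,s)\lesssim t^{-3/2}$: one must check, regime by regime, that the polynomial prefactor $(1+r+s)^{n/2-1}(1+r+t-s)^{n/2-1}$ — whose size depends on how $r$, $s$, $t-s$ and $t$ compare — combines with the Gaussian of width $(s(t-s)/t)^{1/2}$ to produce the dimension-independent exponent $t^{-3/2}$ rather than a worse, dimension-dependent rate. This is precisely where the hypothesis that $P_V$ has no resonance at $n/2$ enters (through \eqref{heatV.bound}), and it is the reason the final decay rate $t^{-1/2}$ is independent of $n$.
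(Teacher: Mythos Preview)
Your approach is correct and starts from the same Duhamel identity as the paper, but the paper bypasses your regime-by-regime Gaussian computation with a single observation: since the two-sided bound \eqref{heatV.bound} has the \emph{same} right-hand side for $V$ and for $V=0$, it gives the pointwise comparison $e^{-sP_V}(z,w)\le C_V\,e^{-sP_0}(z,w)$. The inner integral then collapses by the semigroup property,
\[
\int_{\Hn} e^{-(t-s)P_0}(z,w)\,e^{-sP_V}(w,z)\,dg(z)\ \le\ C_V\int_{\Hn} e^{-(t-s)P_0}(z,w)\,e^{-sP_0}(w,z)\,dg(z)\ =\ C_V\,e^{-tP_0}(w,w),
\]
and the on-diagonal case of \eqref{heatV.bound} gives $e^{-tP_0}(w,w)\asymp t^{-(n+1)/2}(1+t)^{n/2-1}\asymp t^{-3/2}$ for large $t$ directly. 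What you flag as ``the main obstacle'' --- checking that the polynomial prefactors $(1+r+s)^{n/2-1}(1+r+t-s)^{n/2-1}$ and the Gaussian of width $(s(t-s)/t)^{1/2}$ conspire to produce the dimension-independent rate $t^{-3/2}$ --- is therefore avoided entirely: the semigroup identity does that bookkeeping exactly, with no case analysis. Your route amounts to reproving this identity through the upper bound, which is legitimate but laborious; the paper's pointwise comparison plus semigroup trick is what buys the short proof.
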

\begin{proof}
Duhamel's principle gives the trace estimate
\begin{equation}\label{duhamel}
\begin{split}
&\abs*{\tr \sqbrak*{e^{-t P_V} - e^{- tP_0}}} \\
&\quad\le \int_0^t \int_{\Hn} \int_{\Hn} e^{-(t-s) P_0}(w,z) e^{-sP_V}(z,w) \abs{V(z)}\>dg(z) dg(w) ds.
\end{split}
\end{equation}
The uniform bound \eqref{heatV.bound} implies in particular that 
\[
e^{-sP_V}(z,w) \le C_Ve^{-sP_0}(z,w),
\]
and so we can use the semigroup property to estimate
\[
\begin{split}
\int_{\Hn}e^{-(t-s) P_0}(w,z) e^{-sP_V}(z,w) \> dg(w) &\le C_Ve^{-tP_0}(z,z) \\
&\le C_Vt^{-3/2}
\end{split}
\] 
as $t \to \infty$, uniformly in $z$. Applying this to \eqref{duhamel} gives
\[
\abs*{\tr \sqbrak*{e^{-t P_V} - e^{- tP_0}}} \le C_V t^{-1/2} \norm{V}_{L^1} .
\] 
\end{proof}

\section{Scattering phase asymptotics}\label{scphase.sec}

The Birman-Krein formula allows us to connect the wave-trace invariants to corresponding 
asymptotic expansions for the scattering phase and its derivative.
For Schr\"odinger operators in the odd-dimensional Euclidean setting, 
the asymptotic expansion of the scattering phase was established 
by Colin de Verdi\`ere \cite{Colin:1981}, Guillop\'e \cite{Guillope:1981}, and Popov \cite{Popov:1982},
via formulas relating the scattering determinant to regularized determinants of the cutoff resolvent.  
An argument based on expansion of the scattering matrix is given in Yafaev \cite[Thm.~9.2.12]{Yafaev:2010},
and a semiclassical version in Dyatlov-Zworski \cite[Thm.~3.62]{DZbook}.

For hyperbolic space we have the following version of these results:  

\begin{theorem}\label{sigmap.thm}
For $V \in \cinf_0(\Hn, \bbR)$ the function $\sigma'(\xi)$ admits a full asymptotic expansion as $\xi \to +\infty$.
If the dimension $n+1$ is odd, then
\[
\sigma'(\xi) \sim \sum_{k=1}^{\infty} c_k(V) \xi^{n-2k}.
\]
For $n+1$ even, the expansion is truncated,
\[
\sigma'(\xi) = \sum_{k=1}^{[n/2]}c_k(V) \xi^{n-2k} + O(\xi^{-\infty}).
\]
The coefficients are related to the wave invariants by
\[
c_k(V) = \frac{2^{-n+2k}}{\pi^{\frac12} \Gamma(\frac{n+1}{2}-k)} a_k(V).
\]
\end{theorem}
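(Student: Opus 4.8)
The plan is to read the expansion of $\sigma'(\xi)$ off the singularity structure of the relative wave trace at $t=0$, via the Birman--Krein identity, and then fix the constants either by a clean Fourier computation or by comparison with the heat trace. The starting observation is the Fourier duality built into \eqref{vtheta.def}: since $\sigma'$ is even and tempered (Proposition~\ref{temper.prop}), the distribution $\Thetasc$ is exactly $\tfrac12\mathcal{F}[\sigma']$, with $\mathcal{F}$ the convention $\mathcal{F}[g](\xi)=\int e^{-it\xi}g(t)\,dt$ used throughout the paper, so that $\sigma'(\xi) = 2\,\mathcal{F}^{-1}[\Thetasc](\xi)$. Thus the behavior of $\sigma'(\xi)$ as $\xi\to+\infty$ is governed by the structure of $\Thetasc$ near $t=0$ together with its regularity and decay away from $t=0$.

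First I would package the two inputs on $\Thetasc$. By \eqref{Theta.vart}, $\Thetasc$ differs from $\Theta_V$ near $t=0$ only by an entire function (a finite sum of $\cosh$'s and a constant), so Theorem~\ref{wave.exp.thm} gives, for each $N$, a decomposition $\Thetasc = \sum_{k=1}^N a_k(V)\,\vartheta^{-n+2k-1} + G_N$ with $\chi G_N\in C^{2N-n-1}_c$ for a cutoff $\chi\in\cinf_0(\bbR)$ equal to $1$ near $0$. On the other hand, the Poisson formula, in the form \eqref{vtheta.reson}--\eqref{Theta.vart}, shows that on $\{\abs{t}\ge\delta\}$ the distribution $\Thetasc$ is given by an absolutely convergent sum of exponentials $e^{\pm(\zeta-\frac n2)\abs{t}}$, all of which decay (using $\re\zeta<\tfrac n2$, or $\re\zeta>\tfrac n2$ with the opposite sign), so that together with \eqref{NV.bound} one gets that $(1-\chi)\Thetasc$ is Schwartz; hence $\mathcal{F}^{-1}[(1-\chi)\Thetasc]=O(\xi^{-\infty})$, and the same holds for $\mathcal{F}^{-1}[\chi G_N](\xi)=O(\abs{\xi}^{-(2N-n-1)})$ once $N\to\infty$.

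Next I would compute the model Fourier transform. Using the duplication and reflection formulas for $\Gamma$ one obtains the clean identity $\mathcal{F}[\vartheta^\beta]=2^{\beta+1}\sqrt\pi\,\vartheta^{-\beta-1}$, hence $\mathcal{F}^{-1}[\vartheta^{-n+2k-1}](\xi)=\tfrac{2^{-n+2k-1}}{\sqrt\pi}\,\vartheta^{\,n-2k}(\xi)$. Since $\vartheta^{\,n-2k}(\xi)=\xi^{n-2k}/\Gamma(\tfrac{n+1}2-k)$ for $\xi>0$, each wave coefficient contributes the term $\tfrac{2^{-n+2k}}{\sqrt\pi\,\Gamma(\frac{n+1}2-k)}a_k(V)\,\xi^{n-2k}$ to $\sigma'(\xi)$, which is precisely $c_k(V)\xi^{n-2k}$. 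The dimension dichotomy then emerges automatically from the normalization: when $n+1$ is even (so $n-2k$ is odd), the indices $k>[n/2]$ make $\tfrac{n+1}2-k$ a non-positive integer, so $\vartheta^{\,n-2k}$ is a derivative of $\delta$ supported at $\xi=0$ and the expansion truncates; when $n+1$ is odd, $\vartheta^{\,n-2k}$ is a genuine (possibly Hadamard-regularized) homogeneous function for every $k$, yielding the full series. Distributions supported at $t=0$ (the $\delta$-type model terms $\vartheta^{-1-2j}$) or at $\xi=0$ play no role in the asymptotics as $\xi\to+\infty$, so combining the three estimates and letting $N\to\infty$ gives the stated expansion, with remainder $O(\xi^{-\infty})$ in the truncated case.

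The main obstacle is the bookkeeping in the last step: one must check that $\chi\vartheta^{-n+2k-1}$ and the full $\vartheta^{-n+2k-1}$ have the same large-$\xi$ contribution to $\sigma'$, which requires treating the cases $\beta<0$ (conormal, compactly supported difference is Schwartz), $\beta=0$ and $\beta>0$ a non-negative even integer (the model term is a polynomial, so $\chi$ times it is compactly supported and smooth, contributing $O(\xi^{-\infty})$), and $\beta$ a negative odd integer (the model term is $\delta$-like, so $(1-\chi)$ kills the difference) separately, and verifying that the truncation in even dimensions is consistent with the $\Gamma$-pole vanishing of $c_k$. As an independent confirmation of the constants one can insert the expansion of $\sigma'$ into \eqref{heat.bk} and use $\int_0^\infty \xi^{n-2k}e^{-\xi^2 t}\,d\xi=\tfrac12\Gamma(\tfrac{n-2k+1}2)\,t^{-\frac{n-2k+1}2}$; matching with the small-$t$ expansion of Theorem~\ref{heat.trace.thm} forces $c_k(V)=\tfrac{2^{-n+2k}}{\sqrt\pi\,\Gamma(\frac{n+1}2-k)}a_k(V)$ for all $k$ with $n-2k>-1$, in agreement with the Fourier computation.
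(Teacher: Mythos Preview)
Your Fourier-transform strategy is attractive and the model computation $\mathcal{F}[\vartheta^\beta]=2^{\beta+1}\sqrt\pi\,\vartheta^{-\beta-1}$ is correct, as is the heat-trace cross-check at the end.  The gap is in the step where you claim that $(1-\chi)\Thetasc$ is Schwartz.  The Poisson representation \eqref{vtheta.reson} indeed expresses $\Thetasc$ on $\{\abs{t}\ge\delta\}$ as a sum of exponentials $e^{(\zeta-\frac{n}{2})\abs{t}}$ with $\re\zeta<\tfrac{n}{2}$, each of which decays, but the \emph{rate} $e^{(\re\zeta-\frac n2)\abs{t}}$ can be arbitrarily slow if resonances approach the critical line.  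The counting bound \eqref{NV.bound} controls how many resonances lie in a disk, not their distance to $\re s=\tfrac{n}{2}$, and the paper provides no resonance-free strip.  So neither absolute convergence of the sum for fixed $t$ nor rapid decay as $\abs{t}\to\infty$ follows from the stated inputs.  Equivalently, in Fourier variables your remainder is $\tfrac12\bigl[\sigma'-\sigma'*\mathcal{F}^{-1}[\chi]\bigr]$, and showing this is $O(\xi^{-\infty})$ is exactly the statement that $\sigma'$ behaves like a classical symbol at infinity---which is what you are trying to prove.

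The paper deals with this by separating existence from identification.  Existence of \emph{some} asymptotic expansion $\sigma'(\xi)\sim\sum_j b_j\xi^{n-j-1}$ is established independently in Proposition~\ref{sigma.exp.prop}, via a ray (approximate plane-wave) expansion of the Poisson kernel $E_V(s)$ in powers of $s^{-1}$, plugged into the formula $\tau(s)=\det(1+(2s-n)E_V(s)^tVE_0(n-s))$ and \eqref{sigp.trace}.  This uses cutoff resolvent bounds but no information about $\Thetasc$ at infinity.  Only once the expansion is known does the paper compare $\int_0^\infty\sigma'(\xi)e^{-\xi^2 t}\,d\xi$ with the heat expansion \eqref{heat.sigmap} to read off the parity structure and the formula $b_{2k-1}=c_k(V)$ in terms of $a_k(V)$---essentially your final paragraph.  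Your Fourier argument would become a genuine alternative proof if you supplied an a priori reason for $\Thetasc$ to be rapidly decaying away from $t=0$ (e.g.\ a non-trapping/propagation argument giving symbol estimates on $\sigma'$), but as written that step is circular.
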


Before proving the theorem, we start by establishing the existence of the scattering phase expansion.  
The coefficients are relatively easy to calculate once this is known.

\begin{proposition}\label{sigma.exp.prop}
For $V \in \cinf_0(\Hn, \bbR)$ the function $\sigma'(\xi)$ admits an asymptotic expansion of the form
\begin{equation}\label{sigmap.exp}
\sigma'(\xi) \sim \sum_{j=0}^\infty b_j \xi^{n-j-1},
\end{equation}
as $\xi \to \infty$.
\end{proposition}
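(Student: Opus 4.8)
The plan is to transfer the expansion of the heat trace from Theorem~\ref{heat.trace.thm} into an expansion for $\sigma'(\xi)$ via the Laplace transform identity in the Birman--Krein formula \eqref{heat.bk}.  Write $g(t) := \int_0^\infty \sigma'(\xi) e^{-\xi^2 t}\,d\xi$, which is the relative heat trace up to the eigenvalue and $m_V(\tfrac n2)$ contributions (all of which are $O(1)$ as $t\to0$ and hence negligible for the small-$t$ asymptotics).  Theorem~\ref{heat.trace.thm} tells us $g(t)\sim \pi^{-1/2}\sum_{k\ge1} a_k(V)(4t)^{-(n+1)/2+k}$.  The first step is to recognize that the Laplace-type transform $\xi^{n-j-1}\mapsto \int_0^\infty \xi^{n-j-1} e^{-\xi^2 t}\,d\xi = \tfrac12\Gamma(\tfrac{n-j}2)\,t^{-(n-j)/2}$ (valid when $n-j>0$; the remaining cases are handled by analytic continuation or by the cutoff) is, up to constants, an isomorphism on asymptotic scales: a power $\xi^{n-j-1}$ in $\sigma'$ produces a power $t^{-(n-j)/2}$ in $g(t)$.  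Matching the powers $t^{-(n+1)/2+k}$ of $g(t)$ against $t^{-(n-j)/2}$ forces $j=2k-1$, i.e.\ only odd $j$ should appear with the coefficients $b_{2k-1}$ determined by $a_k(V)$.

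The substance of the proof, and the main obstacle, is that the Birman--Krein formula delivers the Laplace transform of $\sigma'$, not $\sigma'$ itself, so one must run the implication in the reverse (Tauberian) direction: knowing the small-$t$ expansion of $g(t)$ does not \emph{a priori} yield a pointwise asymptotic expansion of $\sigma'(\xi)$ as $\xi\to\infty$.  The standard way around this is to use the polynomial a priori bound on $\sigma'$ from Proposition~\ref{temper.prop}, namely $|\sigma'(\xi)|\le C_V(1+|\xi|)^{n-1}$, together with a regularity input.  The cleanest implementation is to subtract off a finite partial sum: set $\sigma'_N(\xi) := \sigma'(\xi) - \sum_{j \text{ odd}, j< 2N} b_j\,\eta(\xi)\,\xi^{n-j-1}$ (with $\eta$ a cutoff vanishing near $0$, the $b_j$ chosen inductively so that the Laplace transform matches the heat expansion through order $t^{-(n+1)/2+N}$), observe that the Laplace transform of $\xi\sigma'_N(\xi^{?})$ is then $O(t^{-(n+1)/2+N+1})$, and deduce from this — via a Tauberian argument of Karamata/de Haan type, or more robustly via the method used in the Euclidean literature (Colin de Verdi\`ere, Guillop\'e, Popov, or the exposition in Yafaev) — that $\sigma'_N$ is itself of lower order.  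Here one really wants a quantitative Tauberian theorem: the cleanest hypothesis is a one-sided monotonicity or, more realistically in this setting, a bound on the derivative $\partial_\xi \sigma'(\xi)$, which can be extracted from the factorization formula Proposition~\ref{tau.factor.prop} exactly as the bound on $\sigma'$ itself was (differentiate \eqref{taup.sum} once more in $\xi$ and use the resonance counting bound \eqref{NV.bound}).

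Concretely, the steps I would carry out are: (1) record the Mellin/Laplace pairing $\int_0^\infty \xi^{a} e^{-\xi^2 t}\,d\xi = \tfrac12\Gamma(\tfrac{a+1}2) t^{-(a+1)/2}$ and its analytic continuation, identifying which power of $\xi$ maps to which power of $t$; (2) invoke Proposition~\ref{temper.prop}, and the analogous bound on $\partial_\xi\sigma'$ obtained from Proposition~\ref{tau.factor.prop}, to place $\sigma'$ in a function class for which a quantitative Tauberian theorem applies; (3) use Theorem~\ref{heat.trace.thm} to pin down the small-$t$ asymptotics of $g(t)=\int_0^\infty\sigma'(\xi)e^{-\xi^2 t}\,d\xi$; (4) run the Tauberian argument iteratively: having subtracted the first $N$ terms of the putative expansion, the remaining Laplace transform is $O(t^{-(n+1)/2+N+1})$, which by the Tauberian bound forces the remainder of $\sigma'$ to be $O(\xi^{n-2N-1})$ up to logarithmic losses, and the derivative bound upgrades this to a clean power; (5) conclude that $\sigma'(\xi)$ has an asymptotic expansion, which a posteriori can only contain the powers $\xi^{n-j-1}$ with the stated structure — in particular the coefficients of the even-$j$ powers vanish because the heat expansion \eqref{heat.trace.PV} has no corresponding terms.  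I expect step (2)--(4), the Tauberian extraction of a pointwise expansion from a transform expansion plus a priori bounds, to be the crux; everything else is bookkeeping with Gamma functions.  (For the precise Tauberian input one can cite the treatment in Dyatlov--Zworski \cite[Thm.~3.62]{DZbook} or Yafaev \cite[Thm.~9.2.12]{Yafaev:2010}, adapting the spectral parameter from $\xi^2$ in the Euclidean convention to the same $\xi^2$ here, since the shift $-n^2/4$ was built into $P_V$.)
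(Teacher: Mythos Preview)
Your approach has a genuine gap at the Tauberian step, and it is not a technicality that can be patched with derivative bounds.  The forward direction---from an asymptotic expansion of $\sigma'(\xi)$ to the small-$t$ expansion of $g(t)=\int_0^\infty \sigma'(\xi)e^{-\xi^2 t}\,d\xi$---is exactly what the paper uses (citing \cite[Lemma~3.65]{DZbook}) in the proof of Theorem~\ref{sigmap.thm}, and that computation already reveals the obstruction: the result contains an undetermined smooth summand $g_0\in C^\infty[0,\infty)$ whose Taylor coefficients at $t=0$ are \emph{not} functions of the $b_j$.  Concretely, for even $j<n$ (when $n$ is even) the term $b_j\xi^{n-j-1}$ contributes an integer power of $t$, which is indistinguishable from a term of $g_0$.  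So even granting that $\sigma'$ has a full expansion, the heat-trace asymptotics cannot see all of its coefficients; conversely, knowing only the heat-trace asymptotics you cannot decide whether $\sigma'$ has an expansion at all.  An oscillatory perturbation of $\sigma'$ with all derivatives polynomially bounded (think of a term like $\xi^{-M}\sin\xi$) contributes something rapidly decaying to $g(t)$ as $t\to0^+$, hence is invisible in the heat expansion, yet destroys the existence of a power-series asymptotic for $\sigma'$.  A bound on $\partial_\xi\sigma'$ from Proposition~\ref{tau.factor.prop} does not rule this out.

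The references you invoke for ``the precise Tauberian input'' do not supply one: \cite[Thm.~3.62]{DZbook} and \cite[Thm.~9.2.12]{Yafaev:2010} prove the scattering-phase expansion by a \emph{direct} construction (semiclassical symbol expansion, respectively the ray/WKB expansion of approximate plane waves), not by inverting a Laplace transform.  That is exactly the route the paper takes here: it builds an approximate Poisson kernel $E_{V,N}(s;z,\omega')=\sum_{j\le N}s^{-j}w_j(z,\omega')E_0(s;z,\omega')$ via transport equations along geodesics, controls the remainder using the cutoff resolvent bound, and reads off the expansion of $\sigma'(\xi)$ from the trace formula \eqref{sigp.trace} for $T(s)=(2s-n)E_V(s)VE_0(n-s)$.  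Only \emph{after} the existence of the expansion is established independently does the paper compare with the heat trace to compute the coefficients---precisely the opposite logical order from your proposal.
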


\begin{proof}
Of the approaches mentioned above, the ray expansion method from Yafaev \cite[\S8.4]{Yafaev:2010} is the most easily
adapted to the hyperbolic setting. In our context, the idea is to expand $E_V(s;z,\omega')$ in powers of $s$,
and then apply this expansion to the scattering phase.

To develop the approximation formula, we first consider $z \in \bbH^{n+1}$ with $\omega' = \infty$.
In standard hyperbolic coordinates $z = (x,y) \in \bbR^n \times \bbR_+$,
\begin{equation}\label{lap.Hn}
\lap = -y^2\del_y^2 +(n-1)y\del_y + y^2\lap_x,
\end{equation}
and the unperturbed generalized eigenfunction has the form (see, e.g., \cite[\S4]{Borthwick:2010})
\begin{equation}\label{E0.infty}
E_0(s;z,\infty) = 2^{-2s-1} \pi^{-\frac12} \frac{\Gamma(s)}{\Gamma(s-\frac{n}2+1)} y^s.
\end{equation}
In geodesic coordinates, $y = e^{-r}$, so this is the analog of a Euclidean plane wave with frequency $\xi = \im s$.  

Following the construction in \cite[\S8.1]{Yafaev:2010}, we define an approximate plane wave using the ansatz
\begin{equation}\label{psiN.def}
\psi_N(s;z) = \sum_{j=0}^N s^{-j} y^s w_j(z),
\end{equation}
with $w_0(z)=1$.  From \eqref{lap.Hn}, we have
\[
\sqbrak[\big]{\lap + V - s(n-s)} (y^s w_j) = y^s (\lap + V) w_j - 2sy^{s-1}\del_y w_j.
\]
We can thus cancel coefficients up to order $s^N$ by imposing the transport equation
\[
2y \del_y w_{j+1} = (\lap + V) w_j.
\]
The solutions are given recursively by
\begin{equation}\label{bj.formula}
w_{j+1}(z) := \frac12 \int_{-\infty}^0 (\lap + V)w_j(x,e^ty) dt 
\end{equation}
for $j \ge 1$. With these coefficients, the function \eqref{psiN.def} satisfies
\begin{equation}\label{bN.remainder}
\sqbrak[\big]{\lap + V - s(n-s)}\psi_N(s;z) = s^{-N}y^s w_N(z).
\end{equation}

In \eqref{bj.formula}, the point $(x,e^t)$ can be interpreted geometrically as the translation of $z=(x,y)$ by distance $t$
along the vertical geodesic through $z$.  Returning to the geodesic polar coordinates $z = (r,\omega) \in \bbR_+ \times \bbS^n$
used to define $E_V(s)$, we let $\phi_{z,\omega'}(t)$ denote the unique geodesic through $z$ with limit point 
$\omega' \in \bbS^n$ as $t \to 0$.  Let $w_0(z,\omega') = 1$ and define $w_j(z,\omega')$ for $j\ge 1$ by 
\[
w_{j+1}(z,\omega') := \frac12 \int_{-\infty}^0 (\lap + V)w_j(\phi_{z,\omega'}(t)) dt.
\]
For the approximate Poisson kernel,
\begin{equation}\label{EVN}
E_{V,N}(s;z,\omega') := \sum_{j=0}^N s^{-j} w_j(z,\omega') E_0(s,z,\omega'),
\end{equation}
the calculation of \eqref{bN.remainder} shows that
\[
\sqbrak[\big]{\lap + V - s(n-s)}E_{V,N}(s;z,\omega') := s^{-N} E_0(s,z,\omega') (\lap + V) w_N(z,\omega').
\]

The coefficients of \eqref{EVN} have support properties analogous to the approximate plane waves in the Euclidean case.  That is,  
for $j \ge 1$, $w_j(z,\omega')$ vanishes unless $z$ lies on a geodesic connecting a point in $\supp V$ to the limit point $\omega'$.  
One can thus repeat the argument from \cite[Thm.~8.4.3]{Yafaev:2010}, using the
cutoff resolvent bound from Guillarmou \cite[Prop.~3.2]{Gui:2005c} in place of its Euclidean counterpart.
The result is that
\begin{equation}\label{EV.approx}
E_V(s;z,\omega') = E_{V,N}(s;z,\omega') + q_n(s;z,\omega'),
\end{equation}
where, for $\re s = \nh$,
\[
\norm{q_n(s;\cdot,\omega')}_{L^2(B)} = O(s^{\frac{n}2-N}),
\]
with $B$ a ball in $\Hn$ containing $\supp V$.
The shift in the power in the error estimate comes from the Gamma factors in the normalization of \eqref{E0.infty}.
The same error estimate applies when \eqref{EV.approx} is differentiated with respect to $s$.

The approximation \eqref{EV.approx} can be applied to the scattering phase through the formula
\eqref{srel.ee}, which gives
\[
\tau(s) = \det (1 + T(s)),
\]
where
\[
T(s) := (2s-n) E_V(s)VE_0(n-s).
\]
By the definition of the scattering phase, and the fact that $1 + T(\nh+i\xi)$ is unitary for $\xi \in \bbR$,
\begin{equation}\label{sigp.trace}
\sigma'(\xi) = -\frac{1}{2\pi} \tr \sqbrak*{(1 + T(\nh+i\xi)^*)T'(\nh+i\xi)}.
\end{equation}
The kernels of $T(s)$ and $T'(s)$ are smooth, and \eqref{EV.approx} gives uniform asymptotic expansions of their kernels 
for $\re s = \nh$, with leading term of order at most $\xi^{n-1}$.  We can thus deduce the expansion of $\sigma'(\xi)$
from \eqref{sigp.trace}.
\end{proof}

Although the leading term in \eqref{sigmap.exp} matches the growth estimate of Proposition~\ref{temper.prop},
this coefficient vanishes and the leading order is actually $\xi^{n-2}$.
Computing coefficients through the construction of Proposition~\ref{sigma.exp.prop} is rather cumbersome, however.   
There is a much easier method, by comparison to the heat trace expansion via the Birman-Krein formula.

\begin{proof}[Proof of Theorem \ref{sigmap.thm}]
By a straightforward calculus argument (see \cite[Lemma~3.65]{DZbook}), 
the expansion \eqref{sigmap.exp} yields the corresponding expansion,
\[
\begin{split}
\int_{0}^\infty \sigma'(\xi) e^{-\xi^2 t} d\xi &\sim \frac12 \sum_{j=0}^{n-1} \Gamma(\tfrac{n-j}{2}) b_j \, t^{-\frac{n-j}2} 
- \frac{1}{2} \sum_{l=0}^{\infty} \frac{(-1)^l}{l!} b_{n+2l}\, t^l \log t  \\
&\qquad + \frac12 \sum_{l=0}^{\infty} \Gamma(-l-\tfrac{1}{2}) b_{n+2l+1} t^{l + \frac12} + g(t),
\end{split}
\]
as $t \to 0^+$, where $g \in C^\infty[0,\infty)$.  The function $g$ is not determined by the coefficients $b_j$.  
On the other hand, by \eqref{heat.bk} and Theorem~\ref{heat.trace.thm} we have
\begin{equation}\label{heat.sigmap}
\int_{0}^\infty \sigma'(\xi) e^{-\xi^2 t} d\xi \sim \pi^{-\frac12} \sum_{k=1}^{\infty}  a_k(V) (4t)^{-\frac{n+1}2+k} 
+ h(t).
\end{equation}
where $h \in C^\infty[0,\infty)$ is given by
\[
h(t) := \sum_{j=1}^d e^{t(\frac{n^2}4 - \lambda_j)}  + \tfrac12 m_V(\tfrac{n}2)
\]

If $n+1$ is odd, then comparing these expansions shows that $b_j = 0$ if $j$ is even, and
\begin{equation}\label{bj.odd}
b_{2k-1} = \frac{2^{-n+2k}}{\pi^{\frac12} \Gamma(\frac{n+1}{2}-k)} a_k(V)
\end{equation}
for $k \in \bbN$.
For $n+1$ even the heat trace expansion contains only integral powers of $t$.
This implies that $b_j = 0$ for all $j \ge n$, and also for even values of $j<n$.    
For odd values of $j < n$, the coefficients are given by \eqref{bj.odd}.
\end{proof}

Integrating the asymptotic expansion from Theorem~\ref{sigmap.thm} yields the following:
\begin{corollary}\label{sigma.cor}
The scattering phase admits a full asymptotic expansion as $\xi \to 0$.
If the dimension $n+1$ is odd, then
\[
\sigma(\xi) \sim \sum_{k=1}^{[n/2]} \frac{c_k(V)}{n-2k+1} \xi^{n-2k+1} + d + \tfrac12 m_V(\tfrac{n}2) 
+ \sum_{k>[n/2]} \frac{c_k(V)}{n-2k+1} \xi^{n-2k+1},
\]
where $d$ is the number of eigenvalues.  For $n+1$ even,
\[
\sigma(\xi) = \sum_{k=1}^{[n/2]} \frac{c_k(V)}{n-2k+1} \xi^{n-2k+1} + d + \tfrac12 m_V(\tfrac{n}2) 
+ O(\xi^{-\infty}).
\]
\end{corollary}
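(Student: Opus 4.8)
The plan is to obtain the expansion of $\sigma(\xi)$ by integrating the expansion of $\sigma'(\xi)$ from Theorem~\ref{sigmap.thm}, using the normalization $\sigma(0)=0$. For $\xi$ large, fix a large constant $A$ and write $\sigma(\xi)=\sigma(A)+\int_A^\xi\sigma'(\eta)\,d\eta$; substituting the asymptotic expansion of $\sigma'$ and integrating term by term then produces the claimed sum of powers of $\xi$ together with a constant.

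The term-by-term integration is routine. Each monomial $c_k(V)\eta^{n-2k}$ integrates to $\tfrac{c_k(V)}{n-2k+1}(\xi^{n-2k+1}-A^{n-2k+1})$, and the exponent $n-2k+1$ never vanishes: in odd dimension $n$ is even, so $n-2k+1$ is odd, while in even dimension the expansion of Theorem~\ref{sigmap.thm} terminates at $k=[n/2]<(n+1)/2$. Hence no logarithmic term appears. The monomials with $n-2k+1>0$, that is $1\le k\le[n/2]$, assemble into $\sum_{k=1}^{[n/2]}\tfrac{c_k(V)}{n-2k+1}\xi^{n-2k+1}$; in odd dimension the monomials with $n-2k+1<0$ give the decaying tail $\sum_{k>[n/2]}\tfrac{c_k(V)}{n-2k+1}\xi^{n-2k+1}$, while in even dimension the $O(\xi^{-\infty})$ remainder of Theorem~\ref{sigmap.thm} integrates to $O(\xi^{-\infty})$. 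Everything else collapses into a single constant, which is independent of $A$ because the left-hand side $\sigma(\xi)$ is.

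The substantive step is to identify this constant. I would compare the $t\to0^+$ constant terms in the heat-trace Birman--Krein identity~\eqref{heat.bk}. Integrating by parts and using $\sigma(0)=0$ gives $\int_0^\infty\sigma'(\xi)e^{-\xi^2t}\,d\xi=2t\int_0^\infty\xi\,\sigma(\xi)e^{-\xi^2t}\,d\xi$; inserting the asymptotics of $\sigma(\xi)$ just derived shows that the $t^0$ coefficient of this integral is precisely the constant in question, since the polynomial part of $\sigma$ contributes only negative powers of $t$, the decaying tail only positive powers, and the behaviour of $\sigma$ near $\xi=0$ only higher-order terms. On the other side of~\eqref{heat.bk}, the eigenvalue sum contributes $d$ and the resonance term contributes $\tfrac12 m_V(\tfrac{n}{2})$ to the $t^0$ coefficient, while by Theorem~\ref{heat.trace.thm} the left-hand side has no $t^0$ term in odd dimension and has $t^0$ coefficient $\pi^{-1/2}a_{(n+1)/2}(V)$ in even dimension. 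Matching constant terms then pins down the constant. (Alternatively, one could read it off the Hadamard factorization of Proposition~\ref{tau.factor.prop}, writing $\tfrac{i}{2\pi}\log[H_V(n-s)/H_V(s)]$ in terms of $\arg H_V(\tfrac{n}{2}+i\xi)$ and applying the argument principle; but the heat-trace comparison is shorter.)

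I expect the main obstacle to be making this constant identification rigorous, in particular justifying the interchange of the asymptotic expansion of $\sigma(\xi)$ with the Laplace-type integral $\int_0^\infty\sigma'(\xi)e^{-\xi^2t}\,d\xi$. This can be done by splitting the integral at a fixed point and using the polynomial bound $\sigma'(\xi)=O(\xi^{n-1})$ of Proposition~\ref{temper.prop} together with the genuine asymptotic expansion of Theorem~\ref{sigmap.thm} to control the tail. One must also keep careful track of the parity of $n$, which governs both the form of the scattering-phase expansion and whether the heat-trace expansion contains a $t^0$ term.
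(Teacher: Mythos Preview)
Your approach is essentially the same as the paper's: both integrate the expansion of $\sigma'$ from Theorem~\ref{sigmap.thm} term by term and identify the resulting constant via the $t\to 0^+$ behavior of the Birman--Krein heat-trace identity. The paper's execution is marginally more direct---it subtracts the polynomial part from $\sigma'$ and reads off $\int_0^\infty\bigl[\sigma'(x)-\sum_{k\le [n/2]} c_k(V)\, x^{n-2k}\bigr]\,dx$ as the $t\to 0$ limit of its Laplace transform using \eqref{heat.sigmap}, then splits the integral at $x=\xi$, thereby avoiding your integration-by-parts step---but the substance is the same.
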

\begin{proof}
By Theorem~\ref{sigmap.thm}, the function
\[
t \mapsto \int_0^\infty \sqbrak[\Bigg]{\sigma'(x) - \sum_{k=1}^{[n/2]}c_k(V) x^{n-2k}} e^{-x^2t} dx
\]
is continuous for $t \in [0,\infty)$.
By \eqref{heat.sigmap} taking the limit as $t \to 0^+$ yields
\[
\int_0^\infty \sqbrak[\Bigg]{\sigma'(x) - \sum_{k=1}^{[n/2]}c_k(V) x^{n-2k}} dx = d + \tfrac12 m_V(\tfrac{n}2).
\]
Splitting the integral at $x=\xi$ then gives, since $\sigma(0) =0$,
\[
\begin{split}
\sigma(\xi) &= \sum_{k=1}^{[n/2]} \frac{c_k(V)}{n-2k+1} \xi^{n-2k+1} 
+ d + \tfrac12 m_V(\tfrac{n}2)\\
&\qquad - \int_\xi^\infty \sqbrak[\Bigg]{\sigma'(x) - \sum_{k=1}^{[n/2]}c_k(V) x^{n-2k}} dx.
\end{split}
\]
By Theorem~\ref{sigmap.thm}, for $n+1$ odd the final integral on the right can be integrated
to produce an asymptotic expansion in $\xi$.  For $n+1$ even, this integral gives an
error term  $O(\xi^{-\infty})$.
\end{proof}

\section{Existence of resonances}\label{exist.sec}

The asymptotic expansions of the wave trace and scattering have significantly different behavior in
odd and even dimensions, so we will consider the two cases separately.

\subsection{Even dimensions}
For $n+1$ even, all of the singularities 
in the wave trace expansion of Theorem~\ref{wave.exp.thm} are detectable for $t \ne 0$.  
It thus follows immediately from Theorem~\ref{poisson.thm} that for $V \in \cinf_0(\Hn, \bbR)$ 
the resonance set $\calR_V$ determines all of the wave invariants $a_k(V)$.  
In particular, since the vanishing of the first two wave invariants 
implies $V=0$ by the formulas of Proposition~\ref{d12.prop}, we obtain the following:
\begin{theorem}\label{even.R0.thm}
For $V \in \cinf_0(\Hn, \bbR)$ with $n+1$ even, if $\calR_V = \calR_0$ then $V=0$.
\end{theorem}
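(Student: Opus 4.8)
The plan is to read the wave invariants $a_k(V)$ off the resonance set using the Poisson formula, and then conclude from the explicit formulas of Proposition~\ref{d12.prop}. To begin, I would observe that $\Theta_0 \equiv 0$ as a distribution, since the two wave operators in \eqref{ThetaV.def} coincide when $V=0$. The right-hand side of the Poisson formula \eqref{poisson.formula} depends only on the resonance set, so the hypothesis $\calR_V = \calR_0$ (with multiplicities) forces $t^{n+1}\Theta_V = t^{n+1}\Theta_0 = 0$ as distributions on $\bbR$. Multiplication by $t^{n+1}$ annihilates precisely the distributions of the form $\sum_{j=0}^{n} c_j\,\delta^{(j)}(t)$, so $\Theta_V$ is supported at the origin; in particular it vanishes on $\bbR\setminus\set{0}$.

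Next I would invoke the wave trace expansion of Theorem~\ref{wave.exp.thm}. Since $n+1$ is even, every exponent $-n+2k-1 = 2k-(n+1)$ is an even integer, hence never a negative odd integer, so no term $a_k(V)\,\vartheta^{-n+2k-1}$ is a derivative of $\delta$: restricted to $\bbR\setminus\set{0}$ it is the nonvanishing function $t \mapsto a_k(V)\,t^{2k-n-1}/\Gamma(k-\nh)$, the factor $1/\Gamma(k-\nh)$ being nonzero because $k-\nh$ is a half-integer. Taking $N$ large enough that $F_N$ is continuous, the vanishing of $\Theta_V$ near the origin (away from $t=0$) gives
\[
\sum_{k=1}^{N} \frac{a_k(V)}{\Gamma(k-\nh)}\,t^{2k-n-1} + F_N(t) = 0 \quad\text{for } 0<\abs{t}<\vep,
\]
with $F_N(t) = O(\abs{t}^{2N-n})$ as $t\to 0$. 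The power functions $t^{2k-n-1}$ for $k=1,\dots,N$ are pairwise distinct and all dominate the remainder as $t\to 0^+$, so this identity forces $a_k(V)=0$ for $k=1,\dots,N$; letting $N\to\infty$, we get $a_k(V)=0$ for every $k\ge 1$.

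Finally, Proposition~\ref{d12.prop} finishes the argument: $a_1(V)=0$ yields $\int_{\Hn}V\,dg = 0$, and then $a_2(V)=0$ reduces to $\int_{\Hn}V^2\,dg = 0$, so $V\equiv 0$. The step I expect to demand the most care is the inference from $t^{n+1}\Theta_V=0$ back to the vanishing of the $a_k(V)$: one needs both the elementary fact that multiplication by $t^{n+1}$ on $\mathcal{D}'(\bbR)$ has kernel exactly the linear span of $\delta,\dots,\delta^{(n)}$, and --- crucially, and special to even dimensions --- the observation that the singular terms $\vartheta^{-n+2k-1}$ in the wave expansion are non-local (not supported at the origin), so that the vanishing of $\Theta_V$ off $t=0$ genuinely detects each $a_k(V)$. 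In odd dimensions those same terms would be derivatives of $\delta$, and the argument would break down --- which is why the odd-dimensional case must be handled by different means.
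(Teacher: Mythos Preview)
Your proposal is correct and follows essentially the same approach as the paper: use the Poisson formula (Theorem~\ref{poisson.thm}) together with the wave trace expansion (Theorem~\ref{wave.exp.thm}) to see that in even dimensions $\calR_V$ determines every wave invariant $a_k(V)$, and then invoke Proposition~\ref{d12.prop} to conclude $V=0$. You have simply made explicit the details the paper leaves implicit --- namely, that $t^{n+1}\Theta_V=0$ forces $\Theta_V$ to be supported at the origin, and that for $n+1$ even none of the $\vartheta^{-n+2k-1}$ terms are point distributions, so each $a_k(V)$ is detected away from $t=0$.
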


We can also deduce a lower bound on the resonance counting function from the wave trace in even dimensions.
Note that $\Theta_V(t) = O(t^{-n+1})$ by Theorem~\ref{wave.exp.thm}, whereas 
the $\calR_0$ contribution in \eqref{poisson.formula} satisfies
\[
u_0(t)  \sim \frac{1}{t^{n+1}}
\]
as $t \to 0$.
It thus follows from \eqref{poisson.formula} that
\begin{equation}\label{even.trace.asym}
\sum_{\zeta \in \calR_V} e^{(\zeta-\frac{n}2)t}  \sim \frac{2}{t^{n+1}}.
\end{equation}
The lower-bound argument from Guillop\'e-Zworski \cite[Thm.~1.3]{GZ:1997} (see also \cite[\S12.2]{B:STHS2})
can be applied to \eqref{even.trace.asym}, yielding the following:
\begin{theorem}
For $n+1$ even, the counting function for $\calR_V$ satisfies
\[
N_V(r) \ge c r^{n+1},
\] 
for some constant $c>0$ that depends only on $n$ and the radius of $\supp V$.
\end{theorem}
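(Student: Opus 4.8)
The argument adapts the lower-bound machinery of Guillop\'e-Zworski \cite[Thm.~1.3]{GZ:1997} (see also \cite[\S12.2]{B:STHS2}), whose essential inputs are the singularity \eqref{even.trace.asym} of the resonance sum at $t=0$ and the a priori polynomial bound \eqref{NV.bound}; I indicate the main steps.  Fix once and for all a nonnegative $\psi\in\cinf_0((1,2))$ with $\int\psi>0$, and for small $h>0$ put $\psi_h(t):=\psi(t/h)$, so that $\supp\psi_h\subset(h,2h)$ stays away from the origin.  Pairing \eqref{even.trace.asym} with $\psi_h$ --- which is legitimate since the singularity at $t=0$ is avoided and the lower-order terms in \eqref{even.trace.asym} are $O(\abs{t}^{-n+1})$ --- yields
\[
\sum_{\zeta\in\calR_V}\int_0^\infty e^{(\zeta-\frac{n}{2})t}\,\psi_h(t)\,dt\;\sim\;C_0\,h^{-n},
\qquad C_0:=2\int_1^2 t^{-(n+1)}\psi(t)\,dt>0,
\]
as $h\to 0^+$, with an error $O(h^{-n+2})$.

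I then evaluate the left-hand side term by term.  With $\Psi(w):=\int_1^2 e^{wu}\psi(u)\,du$, an entire function with $\Psi(0)=\int\psi>0$, each summand equals $h\,\Psi\bigl((\zeta-\tfrac{n}{2})h\bigr)$, so the relation becomes $\sum_{\zeta}\Psi\bigl((\zeta-\tfrac{n}{2})h\bigr)\sim C_0\,h^{-n-1}$.  The finitely many resonances with $\re\zeta\ge n/2$ --- the eigenvalue points in $(n/2,n)$ together with the possible resonance at $n/2$ --- contribute only $O(h)$ and are discarded.  For the remaining resonances, with $\re\zeta<n/2$, repeated integration by parts in the definition of $\Psi$ gives the rapid decay $\abs{\Psi(w)}\le C_N(1+\abs{w})^{-N}$ for $\re w\le 0$ and every $N$, with $C_N$ depending only on $\psi$; in particular, taking $N=n+2$, the bound \eqref{NV.bound} shows that $\sum_{\re\zeta<n/2}\abs{\Psi((\zeta-\tfrac{n}{2})h)}$ converges absolutely.

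The heart of the proof is the scale-separation trick.  Let $\rho_0$ be the radius of a ball containing $\supp V$, and, for a constant $A>0$ still to be chosen, split the sum over $\re\zeta<n/2$ at $\abs{\zeta-\tfrac{n}{2}}=A/h$.  The ``near'' part has modulus at most $\Psi(0)\,N_V(A/h)$, while the ``far'' part is controlled, using the decay of $\abs{\Psi}$ and a Stieltjes integration by parts against \eqref{NV.bound}, by $C(n)\,C_1\,A^{-1}\,h^{-n-1}$, where $C_1$ is the constant in \eqref{NV.bound}.  Choosing $A=A(n,\rho_0)$ large enough that this last quantity does not exceed $\tfrac12 C_0\,h^{-n-1}$, we obtain $\Psi(0)\,N_V(A/h)\ge\tfrac12 C_0\,h^{-n-1}(1+o(1))$, that is, $N_V(r)\ge c\,r^{n+1}$ for all sufficiently large $r$, with $c=C_0/\bigl(2\Psi(0)A^{n+1}\bigr)$.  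Since the constant $C_1$ in \eqref{NV.bound} may be taken to depend only on $n$ and $\rho_0$ --- by the sharp form of the bound in Borthwick \cite{Borthwick:2010} --- and $C_0,\Psi(0)$ depend only on the fixed $\psi$, the resulting $c$ depends only on $n$ and $\rho_0$.

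The one genuinely delicate point is the first step: the resonance sum in \eqref{even.trace.asym} is only conditionally convergent as a distribution, so one must check that it may be paired with the rescaled bumps $\psi_h$ and then re-expanded term by term.  This is exactly what the Poisson formula (Theorem~\ref{poisson.thm}), together with the wave-trace expansion (Theorem~\ref{wave.exp.thm}) and the Guillarmou-Naud evaluation of $u_0$ in \cite{GN:2006}, provides, since $\psi_h$ is supported away from $t=0$ and the partial sums of \eqref{even.trace.asym} converge in $\calS'$.  The scale-separation step is then routine; it is the place where \eqref{NV.bound} is used to force the matching lower bound, and where the dependence of $c$ on $n$ and $\rho_0$ enters.
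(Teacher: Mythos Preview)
Your argument is correct and follows essentially the same route as the paper's proof: pair the resonance sum with a rescaled bump supported away from $t=0$, convert the pairing into a sum of values of an entire test function at the scaled resonances, split that sum at scale $A/h$ (equivalently, the paper's $r=a$), and use the sharp upper bound from \cite{Borthwick:2010} to absorb the far part.  The only cosmetic differences are notational ($h=1/\lambda$, your Laplace-type $\Psi$ versus the paper's $\hat\phi$), and the minor slip that the finitely many resonances with $\re\zeta\ge n/2$ contribute $O(1)$ rather than $O(h)$ to $\sum_\zeta\Psi\bigl((\zeta-\tfrac{n}{2})h\bigr)$, which is in any case negligible compared with $h^{-n-1}$.
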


\begin{proof}
Choose $\phi \in \cinf_0(\bbR_+)$ with $\phi \ge 0$ and $\phi(1) >0$, and set 
\[
\phi_\lambda(t) := \lambda \phi(\lambda t).
\]
By \eqref{even.trace.asym} we have
\[
\int_0^\infty \paren[\bigg]{\sum_{\zeta \in \calR_V} e^{(\zeta-\frac{n}2)t}} \phi_\lambda(t)\>dt 
\ge c_n \lambda^{n+1},
\]
where $c_n$ does not depend on $V$. Using the Fourier transform to evaluate the right hand-side gives
\begin{equation}\label{hatphi.sum}
\sum_{\zeta \in \calR_V} \hat{\phi}\paren*{i(\zeta-\nh)/\lambda} \ge c_n \lambda^{n+1}.
\end{equation}
Since $\hat\phi(\xi)$ is rapidly decreasing, we can estimate $\hat\phi(\xi) = O(\abs{\xi}^{-n-2})$ in particular.
In terms of the counting function, \eqref{hatphi.sum} then implies that
\[
\begin{split}
c_n\lambda^{n+1} & \le  \int_0^\infty (1+r/\lambda)^{-n-2}\>dN_V(r)\\
& =  (n+2) \int_0^\infty  (1+r)^{-n-3}N_V(\lambda r)\>dr.
\end{split}
\]
Splitting the integral at $r=a$ and adjusting the constant gives
\begin{equation}\label{cnl.split}
c_n\lambda^{n+1} \le  N_V(\lambda a) + \int_a^\infty  (1+r)^{-n-3}N_V(\lambda r)\>dr.
\end{equation}
If $V$ has support in a ball of radius $R$, then Borthwick \cite[Thm.~1.1]{Borthwick:2010}
gives an upper bound 
\[
N_V(r) \le C_R r^{n+1}.
\]
Applying this estimate to \eqref{cnl.split} gives
\[
N_V(\lambda a) \ge c_n\lambda^{n+1} - C_R \lambda^{n+1} a^{-1}.
\]
We can then set $a = 2C_R/c_n$ and rescale $\lambda$ to obtain
\[
N_V(\lambda) \ge \frac12 c_n \paren*{\frac{c_n}{2C_R}}^{\!n+1} \lambda^{n+1}.
\]
\end{proof}

The existence of a lower bound in even dimensions is not surprising,
since the optimal order of growth is already attained for $V=0$.    
It is more interesting to examine the difference between $\calR_V$ and the background resonance set.  
Note that when $n+1$ is even, the expansion of Theorem~\ref{sigmap.thm} 
contains only odd powers of $\xi$.  Since $\sigma'(\xi)$ is an even function, this creates a 
discrepancy that we can exploit.  

\begin{theorem}\label{even.scphase.thm}
For $n+1$ even, suppose that $V_1, V_2 \in \cinf_0(\Hn, \bbR)$.  If the resonance sets 
$\calR_{V_1}$ and $\calR_{V_2}$ differ by only finitely many points (counting multiplicities), then:
\begin{enumerate}
\item The corresponding scattering phases $\sigma_{V_1}$ and $\sigma_{V_2}$ differ by a constant.
\item The sets $\calR_{V_1} \backslash \calR_{V_2}$ and $\calR_{V_2} \backslash \calR_{V_1}$ are contained in $(0,n)$ 
and invariant under the reflection $s \mapsto n-s$.
\item The wave invariants satisfy $a_k(V_1) = a_k(V_2)$ for $k = 1, \dots, (n-1)/2$.
\end{enumerate}
Furthermore, if $\calR_{V_1} = \calR_{V_2}$ (with multiplicities), then $\Theta_{V_1} = \Theta_{V_2}$ and hence all of the wave invariants
match.
\end{theorem}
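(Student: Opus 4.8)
The plan is to compare the two potentials through the factorization formula of Proposition~\ref{tau.factor.prop}, extract an identity for $\sigma_{V_1}'-\sigma_{V_2}'$ whose polynomial part in $\xi$ has the wrong parity to match the asymptotic expansion of Theorem~\ref{sigmap.thm}, and use that clash to kill both contributions; the remaining rational function then pins down the symmetric difference of the resonance sets. Write $\mathcal A:=\calR_{V_1}\setminus\calR_{V_2}$ and $\mathcal B:=\calR_{V_2}\setminus\calR_{V_1}$, finite disjoint multisets. Applying Proposition~\ref{tau.factor.prop} to $V_1$ and $V_2$ and dividing, the $H_0$ factors cancel; and since $\calR_{V_1}$ and $\calR_{V_2}$ agree outside a finite set, $G:=H_{V_1}/H_{V_2}$ is $e^{P(s)}$ times a rational function with zero set $\mathcal A$ and pole set $\mathcal B$, where $\deg P\le n+1$. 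Using the partial-fraction formula for $H_V'/H_V$ from \S\ref{poisson.sec} (the common resonances cancel), one gets $\tfrac{G'}{G}(s)=\sum_{a\in\mathcal A}\tfrac1{s-a}-\sum_{b\in\mathcal B}\tfrac1{s-b}+p_0(s)$ with $\deg p_0\le n$. Taking the logarithmic derivative of $\tau_{V_1}/\tau_{V_2}$, setting $s=\tfrac n2+i\xi$, and using $\sigma'_{V_i}(\xi)=-\tfrac1{2\pi}(\tau_{V_i}'/\tau_{V_i})(\tfrac n2+i\xi)$, I obtain
\[ \sigma_{V_1}'(\xi)-\sigma_{V_2}'(\xi)=-B(\xi)+\frac1{2\pi}\sum_{a\in\mathcal A}\frac{n-2a}{(\tfrac n2-a)^2+\xi^2}-\frac1{2\pi}\sum_{b\in\mathcal B}\frac{n-2b}{(\tfrac n2-b)^2+\xi^2}, \]
where $B(\xi)$ is a polynomial containing only even powers of $\xi$: the antisymmetry $q_{V_i}(n-s)=-q_{V_i}(s)$ makes $q_{V_i}'(\tfrac n2+i\xi)$ even in $\xi$, and $p_0(\tfrac n2+i\xi)+p_0(\tfrac n2-i\xi)$ is manifestly even. (A resonance at $\tfrac n2$ contributes nothing, since then $n-2a=0$.)

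For (i) and (iii): both $\sigma_{V_i}'$ are even functions, and since $n+1$ is even Theorem~\ref{sigmap.thm} gives $\sigma_{V_i}'(\xi)=\sum_{k=1}^{[n/2]}c_k(V_i)\xi^{n-2k}+O(\xi^{-\infty})$ as $\xi\to+\infty$, a polynomial in \emph{odd} powers of $\xi$ plus rapid decay. Comparing the difference of these expansions with the displayed identity, in which the rational sums are $O(\xi^{-2})$, shows that the odd polynomial $\sum_k(c_k(V_1)-c_k(V_2))\xi^{n-2k}$ and the even polynomial $B(\xi)$ agree up to $O(\xi^{-2})$; a polynomial that is $O(\xi^{-2})$ vanishes, so both are zero. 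Hence $c_k(V_1)=c_k(V_2)$ for $k\le[n/2]=(n-1)/2$, which gives (iii) via the relation $c_k(V)=\tfrac{2^{-n+2k}}{\pi^{1/2}\Gamma(\frac{n+1}2-k)}a_k(V)$ of Theorem~\ref{sigmap.thm}, the Gamma factor being finite and nonzero for these $k$. With $B\equiv 0$ the difference $\sigma_{V_1}'-\sigma_{V_2}'$ equals a proper rational function; but it is also $O(\xi^{-\infty})$, and a nonzero proper rational function cannot decay at that rate, so $\sigma_{V_1}'\equiv\sigma_{V_2}'$, which is (i).

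For (ii): the vanishing just established makes $\sum_{a\in\mathcal A}\tfrac{n-2a}{(\frac n2-a)^2+\xi^2}=\sum_{b\in\mathcal B}\tfrac{n-2b}{(\frac n2-b)^2+\xi^2}$ an identity of rational functions. Viewing both sides as rational functions of $\xi^2$ and matching the residue at the pole $\xi^2=-(\tfrac n2-\alpha)^2$, to which precisely $a=\alpha$ and $a=n-\alpha$ contribute, yields $m_{\mathcal A}(\alpha)-m_{\mathcal A}(n-\alpha)=m_{\mathcal B}(\alpha)-m_{\mathcal B}(n-\alpha)$ for every $\alpha$; since $\mathcal A$ and $\mathcal B$ are disjoint as multisets, this forces $m_{\mathcal A}(\alpha)=m_{\mathcal A}(n-\alpha)$ and $m_{\mathcal B}(\alpha)=m_{\mathcal B}(n-\alpha)$, i.e.\ invariance of $\mathcal A$ and $\mathcal B$ under $s\mapsto n-s$. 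If $\alpha\in\mathcal A$ has $\re\alpha\ge\tfrac n2$, then $\alpha\in\calR_{V_1}$, so by the spectral facts recalled in the introduction either $\alpha\in(\tfrac n2,n)$ or $\alpha=\tfrac n2$; and if $\re\alpha<\tfrac n2$, then $n-\alpha\in\mathcal A$ has real part $>\tfrac n2$, hence lies in $(\tfrac n2,n)$, so $\alpha\in(0,\tfrac n2)$. Either way $\mathcal A\subset(0,n)$, and likewise $\mathcal B$. Finally, if $\calR_{V_1}=\calR_{V_2}$ with multiplicities, Theorem~\ref{poisson.thm} gives $t^{n+1}\Theta_{V_1}=t^{n+1}\Theta_{V_2}$ at once, so $\Theta_{V_1}-\Theta_{V_2}$ is supported at the origin, a finite sum $\sum_{j=0}^nc_j\delta^{(j)}$; but for $n$ odd every $\vartheta^{-n+2k-1}$ occurring in the expansion of Theorem~\ref{wave.exp.thm} has an even superscript, hence no point mass, so the difference of the two wave traces has no component supported at the origin and therefore vanishes. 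Since the $a_k(V)$ are recovered as the coefficients of the singular terms in the $t\to0$ expansion of $\Theta_V$ (for arbitrarily large truncation), all wave invariants then coincide.

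The only genuinely delicate point is the parity bookkeeping in the first step: one must verify that $B(\xi)$ really contains only even powers of $\xi$ after the substitution $s=\tfrac n2+i\xi$, which requires tracking carefully how the antisymmetry $q(n-s)=-q(s)$ and the convergence-factor polynomial $p_0$ behave under $s\mapsto n-s$. Once that parity is confirmed, its clash with the purely odd-power expansion of Theorem~\ref{sigmap.thm} forces the vanishing, and the rest is routine.
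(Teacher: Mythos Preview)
Your argument is correct and its core is the same as the paper's: from Proposition~\ref{tau.factor.prop} one sees that $\sigma_{V_1}'-\sigma_{V_2}'$ is an even rational function of $\xi$, and the odd-power expansion of Theorem~\ref{sigmap.thm} then forces it to vanish, yielding (i) and (iii).  Where you diverge is in the bookkeeping for (ii) and the ``furthermore'' clause.  For (ii) the paper simply observes that $\sigma_{V_1}'=\sigma_{V_2}'$ makes $\tau_{V_1}/\tau_{V_2}$ constant and reads the reflection symmetry of $\mathcal A,\mathcal B$ directly off the finite product in \eqref{tau.factor12}; your residue-matching on the rational identity reaches the same conclusion by an equivalent route.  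For the final clause the paper uses \eqref{Theta.vart}: once $\sigma'$ and the resonance set agree, $\Thetasc$ and the discrete terms are determined, hence $\Theta_{V_1}=\Theta_{V_2}$.  Your alternative via Theorem~\ref{poisson.thm}, reducing $\Theta_{V_1}-\Theta_{V_2}$ to a distribution supported at $0$ and then ruling out point masses from the even-superscript $\vartheta$'s in Theorem~\ref{wave.exp.thm}, also works; the paper's route is shorter because it avoids the linear-independence check between the $\vartheta^{2m}$ and the $\delta^{(j)}$.
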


\begin{proof}
Under the assumption that $\calR_{V_1}$ and $\calR_{V_2}$ differ by only finitely many points, the factorization of 
Proposition~\ref{tau.factor.prop} implies that
\begin{equation}\label{tau.factor12}
\frac{\tau_{V_1}(s)}{\tau_{V_2}(s)} = (-1)^{m_{V_1}(\frac{n}2) - m_{V_2}(\frac{n}2)} e^{p(s)} 
\prod_{\zeta \in \calR_{V_1} \backslash \calR_{V_2}} \frac{n-s-\zeta}{s-\zeta} \prod_{\zeta \in \calR_{V_2} \backslash \calR_{V_1}} \frac{s-\zeta}{n-s-\zeta},
\end{equation}
where $p$ is a polynomial with degree at most $n+1$, satisfying $p(s) = p(n-s)$.  It follows that $\sigma_{V_1}'(\xi) - \sigma_{V_2}'(\xi)$ is an even, rational function of $\xi$.
Since the expansion formula from Theorem~\ref{sigmap.thm} contains only odd powers of $\xi$, plus a $O(\xi^{-\infty})$ remainder,
this implies that $\sigma_{V_1}'(\xi) = \sigma_{V_2}'(\xi)$.  The equality of the wave invariants for $k = 1,\dots,[n/2]$ 
then follows from the matching of expansion coefficients.  Since $\sigma_{V_1}' = \sigma_{V_2}'$ also implies that
$\tau_{V_1}(s)/\tau_{V_2}(s)$ is constant, the characterization of 
$\calR_{V_1} \backslash \calR_{V_2}$ and $\calR_{V_2} \backslash \calR_{V_1}$ follows from \eqref{tau.factor12}.

If $\calR_{V_1} = \calR_{V_2}$, then the same argument shows that the scattering phases are equal.  It then follows from
\eqref{Theta.vart} that $\Theta_{V_1} = \Theta_{V_1}$.  
\end{proof}

Let us apply Theorem~\ref{even.scphase.thm} to compare $\calR_V$ to $\calR_0$.  
The hypothesis that $\calR_V$ and $\calR_0$ differ by finitely many points implies that $\sigma'(\xi) = 0$ and $a_k(V) = 0$ for $k \le (n-1)/2$.
Since $\calR_0 \cap (0,n) = \emptyset$, it also implies that $\calR_V$ is the union of $\calR_0$ with a possible resonance at $\zeta = \tfrac{n}2$,
plus a finite set of pairs of the form
\[
\zeta = \tfrac{n}2 \pm \sqrt{\tfrac{n^2}4 - \lambda_j},
\]
where $\lambda_j$ is an eigenvalue.

As noted at the start of this section, the vanishing of the first two wave invariants implies that $V=0$.  From 
Theorem~\ref{even.scphase.thm} we thus immediately obtain the following:
\begin{corollary}
Let $V \in \cinf_0(\Hn, \bbR)$ with $n+1$ even and $n \ge 5$.  If $V \ne 0$ then $\calR_V$ differs from $\calR_0$
by infinitely many points (counting multiplicities). 
\end{corollary}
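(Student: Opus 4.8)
The plan is to argue by contradiction, combining the inverse-spectral statement of Theorem~\ref{even.scphase.thm} with the explicit low-order wave invariants computed in Proposition~\ref{d12.prop}. Suppose $V \ne 0$ but $\calR_V$ and $\calR_0$ differ by only finitely many points, counted with multiplicity. Applying Theorem~\ref{even.scphase.thm} with $V_1 = V$ and $V_2 = 0$ yields $a_k(V) = a_k(0)$ for every $k$ in the range $1 \le k \le (n-1)/2$. The wave invariants of the zero potential vanish identically: by the formula \eqref{akv.explicit} one has $a_k(0) = \pi^{-n/2}\int_{\Hn}\bigl[u_{0,k}(z,z) - u_{0,k}(z,z)\bigr]\,dg(z) = 0$. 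Hence $a_k(V) = 0$ for $1 \le k \le (n-1)/2$.

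Since $n+1$ is even, $n$ is odd, and the hypothesis $n \ge 5$ guarantees $(n-1)/2 \ge 2$, so the matched range includes both $k = 1$ and $k = 2$; in particular $a_1(V) = a_2(V) = 0$. I would then feed these two identities into Proposition~\ref{d12.prop}. The vanishing of $a_1(V) = -\tfrac14 \pi^{-n/2}\int_{\Hn} V\,dg$ forces $\int_{\Hn} V\,dg = 0$; substituting this into $a_2(V) = \tfrac1{32}\pi^{-n/2}\int_{\Hn}\bigl[\tfrac{2n-n^2}{6}V + V^2\bigr]\,dg$ annihilates the linear term and leaves $\int_{\Hn} V(z)^2\,dg(z) = 0$, whence $V \equiv 0$ by continuity. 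This contradicts $V \ne 0$, so $\calR_V$ and $\calR_0$ must differ by infinitely many points.

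The statement is essentially an immediate consequence of the two cited results, so I do not expect any real obstacle; the only subtlety is the index bookkeeping that makes the dimension restriction necessary. For $\dim = 4$ (i.e.\ $n = 3$) only the single identity $a_1(V) = 0$ is produced by Theorem~\ref{even.scphase.thm}, which controls $\int V\,dg$ but not $\int V^2\,dg$, and this is exactly why Theorem~\ref{main.thm}(ii) handles $\dim = 4$ separately under the hypothesis $\int V\,dg \ne 0$ (and $\dim = 2$ under $\int V\,dg \ge 0$).
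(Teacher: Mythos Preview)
Your proof is correct and follows essentially the same route as the paper: assume a finite discrepancy, invoke Theorem~\ref{even.scphase.thm}(3) to force $a_1(V)=a_2(V)=0$ once $n\ge 5$, and then use the explicit formulas of Proposition~\ref{d12.prop} to deduce $\int V\,dg=0$ and hence $\int V^2\,dg=0$, contradicting $V\ne 0$. Your closing remark about why $n=3$ and $n=1$ require separate hypotheses also matches the paper's discussion.
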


For $n\le 3$ we cannot fully control the first two wave invariants.  However, we
can derive some extra information from the heat trace.  
If $\sigma'(\xi)=0$, we see from \eqref{heat.bk} and \eqref{heat.trace.PV} that
\begin{equation}\label{rfinite.match}
\sum_{j=1}^d e^{t(\frac{n^2}4 - \lambda_j)}  + \tfrac12 m_V(\tfrac{n}2) \sim  \sum_{k=(n+1)/2}^\infty 2^{-n+2k-1} \pi^{-\frac12} a_k(V) t^{-\frac{n+1}2+k}
\end{equation}
as $t \to 0$.
Matching the coefficients in the expansion leads to a set of relationships between the discrete eigenvalues $\lambda_j$,
the multiplicity $m_V(\tfrac{n}2)$, and the wave invariants.

\begin{corollary}
For $V \in \cinf_0(\bbH^2, \bbR)$, if $V \ne 0$ and  $\int V\>dg \ge 0$, then $\calR_V$ differs from $\calR_0$ by infinitely many points.  
The same conclusion holds for $V \in \cinf_0(\bbH^4, \bbR)$, provided $\int V\>dg \ne 0$.
\end{corollary}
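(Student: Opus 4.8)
The plan is to argue by contradiction in both cases. Suppose that $\calR_V$ and $\calR_0$ differ by only finitely many points (counting multiplicities), and apply Theorem~\ref{even.scphase.thm} with $V_1 = V$ and $V_2 = 0$. Since $S_0(s)S_0(n-s) = I$ by \eqref{SV.inv}, the reference determinant $\tau_0$ is identically $1$, hence $\sigma_0 \equiv 0$; part~(1) of the theorem then forces $\sigma_V' \equiv 0$, and part~(3) gives $a_k(V) = a_k(0) = 0$ for $1 \le k \le (n-1)/2$. In dimension $4$ (so $n = 3$) this already finishes the argument: the range of $k$ includes $k = 1$, and Proposition~\ref{d12.prop} identifies $a_1(V) = -\tfrac14\pi^{-3/2}\int_{\bbH^4} V\,dg$; hence $\int V\,dg = 0$, contradicting the hypothesis $\int V\,dg \ne 0$, so $\calR_V$ must differ from $\calR_0$ by infinitely many points.

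The two-dimensional case ($n = 1$) requires more, because the scattering-phase expansion of Theorem~\ref{sigmap.thm} is then empty and part~(3) of Theorem~\ref{even.scphase.thm} is vacuous. Here I would feed $\sigma_V' \equiv 0$ into the heat-trace relation \eqref{rfinite.match}, which for $n = 1$ reads
\[
\sum_{j=1}^d e^{t(\frac14 - \lambda_j)} + \tfrac12 m_V(\tfrac12) \;\sim\; \sum_{k=1}^\infty 2^{2k-2}\,\pi^{-\frac12}\,a_k(V)\,t^{k-1}, \qquad t \to 0^+.
\]
Comparing the $t^0$ coefficients and inserting $a_1(V) = -\tfrac14\pi^{-1/2}\int V\,dg$ from Proposition~\ref{d12.prop} gives
\[
d + \tfrac12 m_V(\tfrac12) = \pi^{-\frac12}a_1(V) = -\frac{1}{4\pi}\int_{\bbH^2} V\,dg .
\]
The left-hand side is nonnegative, so the hypothesis $\int V\,dg \ge 0$ forces all three quantities to vanish: $d = 0$, $m_V(\tfrac12) = 0$, and $\int V\,dg = 0$. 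But with $d = 0$ and $m_V(\tfrac12) = 0$ the left-hand side of the displayed asymptotic is the zero function, so every coefficient on the right must vanish; in particular $a_2(V) = 0$. By Proposition~\ref{d12.prop} with $n = 1$ we have $a_2(V) = \tfrac1{32}\pi^{-1/2}\int_{\bbH^2}(\tfrac16 V + V^2)\,dg$, which together with $\int V\,dg = 0$ yields $\int V^2\,dg = 0$, hence $V = 0$ — the desired contradiction.

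The substance of the argument, rather than any single estimate, is the sign input in dimension $2$: one must exploit the nonnegativity of the spectral defect $d + \tfrac12 m_V(\tfrac n2)$ to promote the one-sided hypothesis $\int V\,dg \ge 0$ to the exact identity $\int V\,dg = 0$, after which the heat-trace expansion can be bootstrapped one order further to reach $a_2(V)$ and the rigidity statement of Proposition~\ref{d12.prop} applies. It remains only to check two routine points: that Theorem~\ref{even.scphase.thm} is legitimately applied with $V_2 = 0$ (it is, since $0 \in \cinf_0(\Hn,\bbR)$ and $\calR_0$ is the reference set throughout), and that matching coefficients in \eqref{rfinite.match} is valid, which holds because the left-hand side is real-analytic in $t$ near $0$, so its asymptotic expansion is its Taylor series and equality of asymptotic expansions forces equality of all coefficients.
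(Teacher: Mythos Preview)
Your proof is correct, and for the $n=3$ case it is identical to the paper's argument. In the $n=1$ case you take a minor but genuine variant: after deducing $d=0$, $m_V(\tfrac12)=0$, and $\int V\,dg=0$ from the $t^0$ coefficient of \eqref{rfinite.match}, you push the heat-trace matching one order further to obtain $a_2(V)=0$ and conclude $V=0$ directly from Proposition~\ref{d12.prop}. The paper instead invokes part~(2) of Theorem~\ref{even.scphase.thm} (the structural fact that $\calR_V\setminus\calR_0$ and $\calR_0\setminus\calR_V$ lie in $(0,n)$) to conclude from $d=0$ and $m_V(\tfrac12)=0$ that $\calR_V=\calR_0$, and then appeals to Theorem~\ref{even.R0.thm}. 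Since Theorem~\ref{even.R0.thm} is itself proved via the vanishing of $a_1$ and $a_2$, the two routes ultimately converge; your version has the modest advantage of being self-contained within the heat-trace identity and not requiring the structural description of the symmetric difference of the resonance sets.
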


\begin{proof}
Assume that $\calR_V$ differs from $\calR_0$ by finitely many points.  For $n=1$ the $t^0$ term in \eqref{rfinite.match}
gives
\[
d +  \tfrac12 m_V(\tfrac{n}2) =  - \frac{1}{4\pi} \int_{\bbH^2} V(z)\>dg(z).
\]
Hence $\int V\>dg \ge 0$ implies $\calR_V = \emptyset$, which gives $V=0$ by Theorem~\ref{even.R0.thm}.

For $n=3$, the assumption that $\calR_V$ differs from $\calR_0$ by finitely many points gives $a_1(V) = 0$
by Theorem~\ref{even.scphase.thm}.  This means $\int V\>dg = 0$.
\end{proof}

Assuming a finite discrepancy between $\calR_V$ and $\calR_0$,
the expansion \eqref{rfinite.match} also implies a set of relations 
between eigenvalues and wave invariants.  For $n=1$, we have
\[
\frac{1}{(k-1)!} \sum_{j=1}^d \paren*{\tfrac{1}4 - \lambda_j}^{k-1} = 4^{k-1} \pi^{-\frac12} a_k(V)
\]
for $k \ge 2$, and if $n=3$,
\[
d +  \tfrac12 m_V(\tfrac{3}2) = \pi^{-\frac12} a_2(V)
\]
and
\[
\frac{1}{(k-2)!} \sum_{j=1}^d \paren*{\tfrac{9}4 - \lambda_j}^{k-2} = 4^{k-2} \pi^{-\frac12} a_k(V)
\]
for $k \ge 3$.
Although these relations seem rather delicate, they do not lead to any obvious contradiction.

\subsection{Odd dimensions}

In odd dimensions, the primary limitation to drawing implications from the wave trace is the fact that the
terms in the expansion of Theorem~\ref{wave.exp.thm} with $k \le n/2$ are distributions supported only at $t=0$.
Hence the trace formula of Theorem~\ref{poisson.thm} yields no information about the first
$n/2$ wave invariants.

In the Euclidean case, S\'a Barreto-Zworski \cite{SaZw} exploited the decay of the heat trace as 
$t \to \infty$ to prove an existence result.  In the hyperbolic case, 
the corresponding decay rate from Corollary~\ref{ht.decay.cor}, is merely $O(t^{-1/2})$, independent of the dimension.  
Hence this approach fails and we obtain an existence result only for dimension three.  

\begin{theorem}\label{odd.exist.thm}
For $V \in \cinf_0(\bbH^3, \bbR)$, if $V \ne 0$ then $\calR_V$ is not empty.  
\end{theorem}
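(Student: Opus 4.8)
The plan is to argue by contradiction, assuming $\calR_V = \emptyset$ and deducing $V = 0$. First I would record the consequences of emptiness: since every eigenvalue $\lambda$ of $\lap + V$ gives a resonance $\zeta \in (\tfrac{n}2, n)$ with $\lambda = \zeta(n-\zeta)$, the operator has no discrete spectrum ($d = 0$), and since a pole of $R_V$ at $\tfrac n2$ would lie in $\calR_V$ we also have $m_V(\tfrac n2) = 0$. Here $n = 2$, so $n+1 = 3$ is odd and the background term $u_0$ in Theorem~\ref{poisson.thm} vanishes. The Poisson formula then reads $t^{3}\Theta_V = 0$ as a distribution on $\bbR$, which forces $\Theta_V$ to be supported at the origin, hence a linear combination of $\delta$, $\delta'$, $\delta''$.

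The next step is to compare this with the wave-trace expansion. Applying Theorem~\ref{wave.exp.thm} with $n = 2$ and $N = 2$, and using $\vartheta^{-1} = \delta$ and $\vartheta^{1}(t) = \abs{t}$, gives
\[
\Theta_V(t) = a_1(V)\,\delta(t) + a_2(V)\,\abs{t} + F_2(t),
\]
with $F_2 \in C^1(\bbR)$ and $F_2(t) = O(t^2)$ as $t \to 0$. Subtracting the point-mass term, $\Theta_V - a_1(V)\delta = a_2(V)\abs{t} + F_2(t)$; the left-hand side is supported at the origin, while the right-hand side is a continuous function, and a continuous function whose distributional support is a single point must vanish identically. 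Dividing $a_2(V)\abs{t} + F_2(t) = 0$ by $t$ and letting $t \to 0^+$ (with $F_2(t)/t = O(t)$) then yields $a_2(V) = 0$.

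Finally I would invoke Proposition~\ref{d12.prop}. For $n = 2$ the coefficient $\tfrac{2n-n^2}{6}$ is zero, so the formula collapses to
\[
a_2(V) = \frac{1}{32\pi}\int_{\bbH^3} V(z)^2\,dg(z).
\]
Thus $a_2(V) = 0$ forces $\int_{\bbH^3} V^2\,dg = 0$ and hence $V = 0$, the desired contradiction.

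The argument is short, and the only delicate point is the reconciliation of the two descriptions of $\Theta_V$ — that a distribution supported at a point cannot agree with a nontrivial continuous function. This step is also exactly where the method is specific to $\bbH^3$: when $n+1 \ge 5$ is odd one has $2 \le n/2$, so the term $\vartheta^{-n+3}$ multiplying $a_2(V)$ in Theorem~\ref{wave.exp.thm} is itself a point distribution at $t = 0$, the wave trace no longer detects $a_2(V)$, and the contradiction is unavailable — consistent with the absence of an existence result beyond dimension three.
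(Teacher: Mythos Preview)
Your proof is correct and follows the same route as the paper's: combine the Poisson formula (Theorem~\ref{poisson.thm}) with the wave-trace expansion (Theorem~\ref{wave.exp.thm}) to force $a_2(V)=0$, then invoke Proposition~\ref{d12.prop}. The paper compresses your steps 3--7 into the single sentence ``Theorems~\ref{poisson.thm} and \ref{wave.exp.thm} show that $a_k(V)=0$ for $k\ge 2$''; your version makes explicit the mechanism (a continuous function cannot coincide with a distribution supported at a point) and also records the useful remark on why the argument fails for $n+1\ge 5$.
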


\begin{proof}
For $n=2$, if $\calR_V = \emptyset$ then Theorems~\ref{poisson.thm}  and \ref{wave.exp.thm} show that $a_k(V) = 0$ for $k \ge 2$.  
By the formula from Proposition~\ref{d12.prop}, 
\[
a_2(V) =  \frac{1}{32\pi} \int_{\bbH^3} V(z)^2\>dg(z),
\]
and so $a_2(V) = 0$ implies $V=0$ when $n=2$.
\end{proof}

As long as at least one resonance exists, we can use the Poisson formula to show
that there are infinitely many.  The arguments from Christiansen \cite[Thm.~1]{Christ:1999b}
and S\'a Barreto \cite[Thm.~1.3]{SB:2001}
can then be applied to produce a  lower bound on the count.
\begin{theorem}
For $V \in \cinf_0(\Hn, \bbR)$ with $n+1$ odd, either $\calR_V = \emptyset$  or $\calR_V$ is infinite
and the counting function satisfies
\[
\limsup_{r \to \infty} \frac{N_V(r)}{r} > 0.
\]
\end{theorem}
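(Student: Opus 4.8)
The plan is to read the result off the Poisson formula of Theorem~\ref{poisson.thm}, which in odd dimensions ($u_0\equiv0$) takes the form
\[
t^{n+1}\Theta_V \;=\; \tfrac12\,t^{n+1}\sum_{\zeta\in\calR_V}e^{(\zeta-\frac{n}2)\abs{t}},
\]
and to compare the singularity of $\Theta_V$ at $t=0$ with the size of $\calR_V$. Since $\calR_0=\emptyset$ in odd dimensions, the hypothesis $\calR_V\ne\emptyset$ forces $V\ne0$, and the full assertion --- $\calR_V$ infinite with $\limsup_{r\to\infty}N_V(r)/r>0$ --- will follow from the single implication $V\ne0\ \Rightarrow\ \limsup_{r\to\infty}N_V(r)/r>0$, which in particular rules out a finite nonempty resonance set.

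The starting point is that $V\ne0$ forces a nonzero wave invariant: by Proposition~\ref{d12.prop}, $a_1(V)=-\tfrac14\pi^{-n/2}\int_{\Hn}V\,dg$, and if $\int_{\Hn}V\,dg=0$ then $a_2(V)=\tfrac1{32}\pi^{-n/2}\int_{\Hn}V(z)^2\,dg(z)>0$. So the wave-trace expansion of Theorem~\ref{wave.exp.thm} (equivalently, the heat-trace expansion of Theorem~\ref{heat.trace.thm} and the scattering-phase expansion of Theorem~\ref{sigmap.thm}) has a first nonvanishing coefficient $a_{k_0}(V)\ne0$, with $k_0\in\{1,2\}$.

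If $k_0\le n/2$ --- which holds whenever $n\ge4$, and when $n=2$ with $\int_{\Hn}V\,dg\ne0$ --- then by Theorem~\ref{sigmap.thm} one has $\sigma'(\xi)\sim c_{k_0}(V)\xi^{n-2k_0}$ with $c_{k_0}(V)\ne0$ and $n-2k_0\ge0$; equivalently, $\Theta_V$ carries a nonzero multiple of $\delta^{(n-2k_0)}$ at $t=0$, and by the Poisson formula the resonance sum must reproduce this singularity. Combined with the polynomial bound \eqref{NV.bound}, the Weyl-type lower-bound argument of Guillop\'e--Zworski \cite[Thm.~1.3]{GZ:1997} --- already invoked above for the even-dimensional estimate --- then gives $N_V(r)\ge c\,r^{n-2k_0+1}\ge c\,r$. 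The only remaining possibility is $a_1(V)=0$, $a_2(V)>0$, and $2>n/2$, i.e. $n=2$ with $\int_{\Hn}V\,dg=0$; in that case $t^{3}\Theta_V$ has a genuine $t\abs{t}^{3}$-type corner at $t=0$ produced by $a_2(V)$, and by the Poisson formula so does $\tfrac12\,t^{3}\sum_{\zeta\in\calR_V}e^{(\zeta-1)\abs{t}}$. Feeding this non-smoothness, the Hadamard factorization of Proposition~\ref{tau.factor.prop} (with $H_0\equiv1$), the growth estimate of Proposition~\ref{temper.prop}, and the bound $N_V(r)=O(r^{3})$ into the complex-analytic argument of Christiansen \cite{Christ:1999b} and S\'a Barreto \cite{SB:2001} --- which is insensitive to the replacement of Euclidean space by $\Hn$ once these ingredients are in place --- yields $\limsup_{r\to\infty}N_V(r)/r>0$. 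Since this estimate implies that $\calR_V$ is infinite, the proof is complete.

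I expect the genuinely delicate step to be the three-dimensional case with $\int_{\Hn}V\,dg=0$: there the scattering phase is bounded, so no Weyl-type argument is available, and the lower bound must be squeezed out of the single heat invariant $a_2(V)=\tfrac1{32}\pi^{-1}\int_{\bbH^3}V^2\,dg$ --- the mere corner of $t^{3}\Theta_V$ at the origin --- by the Christiansen--S\'a Barreto Tauberian and Jensen-type machinery. The principal work there is transcribing that machinery faithfully to the shifted spectral parameter $s(n-s)$ and the half-plane geometry of $\Hn$, using the reflection symmetry $\tau(s)\tau(n-s)=1$, the vanishing $\calR_0=\emptyset$, and the fact that the only resonances with $\re\zeta\ge n/2$ lie in $[n/2,n)$.
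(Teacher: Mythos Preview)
Your strategy attempts to prove the stronger implication $V\ne0\Rightarrow\limsup_{r\to\infty}N_V(r)/r>0$, from which the theorem would follow. The gap is in your Case~1 argument ($k_0\le n/2$). The claim that ``by the Poisson formula the resonance sum must reproduce this singularity'' fails: for $k\le n/2$ the wave-trace term $a_k(V)\vartheta^{-n+2k-1}$ is a point distribution $c\,\delta^{(n-2k)}$ supported at $t=0$, and the factor $t^{n+1}$ in Theorem~\ref{poisson.thm} annihilates it (since $n-2k<n+1$). For $t\ne0$ the Poisson identity does give $\Theta_V(t)=\tfrac12\sum_{\zeta\in\calR_V}e^{(\zeta-\frac{n}2)\abs{t}}$, but the wave-trace expansion shows $\Theta_V(t)=O(\abs{t})$ as $t\to0^+$, so the resonance sum has \emph{no} $t\to0$ singularity to feed into the Guillop\'e--Zworski argument. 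The scattering-phase route has the same defect: in the factorization \eqref{taup.sum} (with $\calR_0=\emptyset$) the polynomial $q'$ can absorb any leading term $c_{k_0}(V)\xi^{n-2k_0}$ with $n-2k_0\ge0$, so nonvanishing of $c_{k_0}(V)$ places no constraint whatsoever on $\calR_V$. This is exactly the limitation the paper flags at the start of the odd-dimensional subsection of \S\ref{exist.sec}, and in particular for odd dimension $\ge5$ the paper does \emph{not} establish $V\ne0\Rightarrow\calR_V\ne\emptyset$; your sketch would prove more than is currently known.

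The paper's proof avoids this by arguing directly from the hypothesis $\calR_V\ne\emptyset$. First, if $\calR_V$ were finite the Poisson identity for $t>0$ gives $\lim_{t\to0^+}\Theta_V(t)=\tfrac12\#\calR_V$, and since the wave-trace expansion has no $t^0$ term in odd dimension this limit must vanish; hence a nonempty $\calR_V$ is automatically infinite. Second, assuming $N_V(r)=O(r)$, the integral-power structure of the scattering-phase expansion (Corollary~\ref{sigma.cor}) is fed into the Christiansen--S\'a~Barreto machinery \cite{Christ:1999b,SB:2001} to obtain a contradiction. Your Case~2 is close in spirit to this second step; what is missing is a correct substitute for Case~1, which the paper supplies with the $t^0$-coefficient argument rather than any Weyl-type bound.
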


\begin{proof}
Suppose that $\calR_V$ is finite.  By Theorem~\ref{poisson.thm} the wave
trace is given by a finite sum,
\[
\Theta_V(t) = \frac12 \sum_{\zeta\in\calR_V} e^{(\zeta-\frac{n}2)\abs{t}},
\]
for $t \ne 0$.  Hence
\[
\lim_{t \to 0} \Theta_V(t) = \frac12 \#\calR_V.
\]
Since the wave trace expansion of Theorem~\ref{wave.exp.thm} has no term of order $t^0$
for $n+1$ odd, this shows that $\calR_V = \emptyset$.

Now assume that $\calR_V$ is infinite.  Since $\calR_0 = \emptyset$ in odd dimensions, the factorization formula of 
Proposition~\ref{tau.factor.prop} reduces to 
\[
\tau(s) = (-1)^{m_V(\tfrac{n}2)} e^{q(s)} \frac{H_V(n-s)}{H_V(s)}.
\]
This is completely analogous to the factorization in the Euclidean case, once we shift the spectral parameter by setting 
$s = \nh + i\xi$.

Suppose that $N_V(r) = O(r)$.  Then, the scattering phase expansion 
of Corollary~\ref{sigma.cor} allows us to apply \cite[Thm.~1.2]{SB:2001} to deduce that
\[
\abs*{\sum_{\abs{\xi_j} < r } \frac{1}{\xi_j} }\le C,
\]
for all $r>0$. The argument from the proof of \cite[Thm.~1.3]{SB:2001} then yields 
a contradiction to the fact the asymptotic expansion from Corollary~\ref{sigma.cor} has only integral powers of $\xi$.
\end{proof}



\end{document}